\theoremstyle{plain}
\newtheorem{theorem}{Theorem}
\newtheorem{lemma}{Lemma}[section]
\newtheorem{claim}{Claim}[section]
\newtheorem{problem}{Problem}
\theoremstyle{remark}
\newtheorem{remark}{Remark}[section]
\newtheorem*{acknowledgment}{Acknowledgment}
\theoremstyle{definition}
\newtheorem{algorithm}{Algorithm}
\newtheorem*{algorithm*}{Algorithm}
\newtheorem{definition}{Definition}[section]
\numberwithin{equation}{section}
\newcommand{\eqindent}{\displayindent0pt\displaywidth\textwidth}
\newcommand{\namedtheorem}[3][2]{
	\theoremstyle{plain}
	\newtheorem*{#1}{#1}
	\begin{#1}[#3]
		#2
	\end{#1}
}
\newenvironment{subtheorem}[1]{%
	\def\subtheoremcounter{#1}%
	\refstepcounter{#1}%
	\protected@edef\theparentnumber{\csname the#1\endcsname}%
	\setcounter{parentnumber}{\value{#1}}%
	\setcounter{#1}{0}%
	\expandafter\def\csname the#1\endcsname{\theparentnumber.\Alph{#1}}%
	\expandafter\def\csname theH#1\endcsname{theorem.\theparentnumber.\Alph{#1}}%
	\unskip\ignorespaces
}{%
	\setcounter{\subtheoremcounter}{\value{parentnumber}}%
	\ignorespacesafterend
}
\newcounter{parentnumber}
\newcommand{\supp}[1]{\mathrm{supp}\left( {#1} \right)}
\newcommand{\norm}[1]{\| {#1}\| }
\newcommand{\diam}{\mathrm{diam}}
\newcommand{\set}[1]{\left\{#1\right\}}
\newcommand{\void}{\varnothing}
\newcommand{\abs}[1]{\left|#1\right|}
\newcommand{\dist}[2]{\mathrm{dist}\left({#1}, {#2}\right)}
\newcommand{\jet}{\mathscr{J}}
\newcommand{\Q}{\mathcal{Q}}
\newcommand{\R}{\mathbb{R}}
\newcommand{\brac}[1]{\left(#1\right)}
\renewcommand{\d}{\partial}
\newcommand{\Rn}{\mathbb{R}^n}
\newcommand{\grad}{\nabla}
\newcommand{\qt}[1]{``\hspace{1sp}#1\,''}
\newcommand{\E}{\mathcal{E}}
\newcommand{\ct}{C^2}
\newcommand{\ctp}{C^2_+}
\newcommand{\rt}{\R^2}
\renewcommand{\P}{\mathcal{P}}
\newcommand{\pos}{[0,\infty)}
\newcommand{\for}{\enskip\text{for}\enskip}
\newcommand{\ddtm}{\frac{d^m}{dt^m}}
\newcommand{\dq}{\delta_Q}
\newcommand{\Ls}{\Lambda^\sharp}
\begin{document}

	\title{$ {C}^2(\mathbb{R}^2) $ Nonnegative Extension by Bounded-depth Operators}
	\author{Fushuai Jiang \and Garving~K. Luli}
	\maketitle

	\begin{abstract}
		In this paper, we prove the existence of a nonnegative parameter-dependent (nonlinear) $ C^2(\mathbb{R}^2) $ extension operator with bounded depth.
		
	\end{abstract}

	\section{Introduction}

	For nonnegative integers $ m,n $, we write $ C^m(\Rn) $ to denote the Banach space of $ m $-times continuously differentiable real-valued functions such that the following norm is finite
	\begin{equation*}
	\norm{F}_{C^m(\Rn)} := \sup\limits_{x \in \Rn}\brac{  \sum\limits_{\abs{\alpha} \leq m} \abs{ \partial^\alpha F(x) }^2   }^{1/2}\,.
	\end{equation*}
	If $ S $ is a finite set, we write $ \#(S) $ to denote the number of elements in $ S $. We use $ C $ to denote constants that depend only on $ m $ and $ n $.

	Building on \cite{JL20}, in this paper, we prove the following theorem, which was announced in \cite{JL20}.
	
	\begin{theorem}\label{thm.bd}
		Let $ E \subset \rt $ be a finite set. There exist (universal) constants $C, D$, and a map $ \E :   \ctp(E) \times \pos \to \ctp(\rt)  $ such that the following hold. 
		
		\begin{enumerate}[(A)]
			\item Let $ M \geq 0 $. Then for all $ f \in \ctp(E) $ with $ \norm{f}_{\ctp(E)} \leq M $, we have $ \E(f,M) = f $ on $ E $ and 
			$ \norm{\E(f,M)}_{\ct(\rt)} \leq CM $.
			
			\item For each $ x \in \rt $, there exists a set $ S(x) \subset E $ with $ \# (S(x)) \leq D $ such that for all $ M \geq 0 $ and $ f, g \in \ctp(E) $ with $ \norm{f}_{\ctp(E)}, \norm{g}_{\ctp(E)} \leq M $ and $ f|_{S(x)} = g|_{S(x)} $, we have
			\begin{equation*}
			\d^\alpha \E(f,M)(x) = \d^\alpha \E(g,M)(x)
			\for
			\abs{\alpha} \leq 2\,.
			\end{equation*}
			
		\end{enumerate}
	
	\end{theorem}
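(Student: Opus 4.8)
The plan is to assemble $\E(f,M)$ by patching local extensions together with a nonnegative partition of unity, so that nonnegativity is preserved for free. First I would fix a Whitney decomposition $\set{Q}$ of $\rt$ adapted to $E$, with $\diam Q \le C\dist{Q}{E}$ for squares not meeting $E$, together with a partition of unity $\set{\theta_Q}$ with $\theta_Q \ge 0$, $\sum_Q \theta_Q \equiv 1$, $\supp{\theta_Q}$ contained in a fixed dilate $Q^\ast$, and $\abs{\d^\alpha \theta_Q} \le C(\diam Q)^{-\abs\alpha}$. To each $Q$ I would attach a set $S(Q) \subset E$ consisting of the $O(1)$ points of $E$ in a bounded dilate of $Q$ (or the single nearest point, if that set is empty), and a local function $F_Q \in \ctp(\rt)$, and then set
\[
\E(f,M) := \sum_Q \theta_Q\, F_Q\,.
\]
Since each $F_Q \ge 0$, $\theta_Q \ge 0$, and $\sum_Q \theta_Q \equiv 1$, the sum is automatically nonnegative; the whole content of the theorem is thus (i) constructing the $F_Q$ with $\norm{F_Q}_{\ct(\rt)} \le CM$, $F_Q = f$ on $S(Q)$, and $F_Q$ a function of $f|_{S(Q)}$ and $M$ alone, and (ii) making neighboring local solutions consistent enough to survive differentiation of the partition of unity.

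For (i), since $\norm{f}_{\ctp(E)} \le M$ the function $f$, hence a fortiori its restriction to the bounded set $S(Q)$, admits a global nonnegative $\ct(\rt)$ extension of norm $\le M$; the finiteness principle for nonnegative $\ct(\rt)$ extension from \cite{JL20} then yields a controlled local solution, and I would make the choice of $F_Q$ canonical in the finitely many numbers $\set{f(p):p\in S(Q)}$ and in $M$. The construction splits into cases by the size of the data relative to $M(\diam Q)^2$: when $\min_{p\in S(Q)} f(p)$ is large at scale $Q$, the nonnegativity constraint is inactive on $Q^\ast$ once the $\ct$ norm is controlled, so $F_Q$ may be borrowed from the ordinary linear $\ct$ extension theory; when the data is small, nonnegativity genuinely binds, and one uses the structural fact that a nonnegative $\ct$ function is, near its near-minima and up to an $O(M(\diam Q)^2)$ error, a nonnegative quadratic, which lets one write $F_Q$ down explicitly from $f|_{S(Q)}$. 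All dependence being through $f|_{S(Q)}$ and $M$, property (B) follows with $S(x) := \bigcup_{Q:\, x\in Q^\ast} S(Q)$, a union of $O(1)$ sets of size $O(1)$, so $\#(S(x)) \le D$.

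For the bound in (A), differentiating $\E(f,M) = \sum_Q \theta_Q F_Q$ produces, besides the harmless terms $\theta_Q\,\d^\alpha F_Q$, cross terms carrying factors $(\diam Q)^{-\abs\beta}$, which are controlled only if neighboring local solutions agree to second order on the overlap of their supports, i.e.
\[
\abs{\d^\gamma (F_Q - F_{Q'})(x)} \le CM(\diam Q)^{2-\abs\gamma} \quad\text{for}\quad x \in Q^\ast \cap (Q')^\ast,\ \abs\gamma \le 2\,.
\]
I expect this consistency estimate to be the main obstacle. In the linear theory it is immediate, since two extensions of the same local data differ by an element of a fixed finite-dimensional space on which the relevant quantity is controlled; but the nonnegative extension map is nonlinear and $F_Q$, $F_{Q'}$ solve different though overlapping local problems, so that argument is unavailable. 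I would instead establish a stability estimate for near-optimal nonnegative local extensions: after rescaling $Q$ to the unit square, the data $S(Q)$, $S(Q')$ and the attached values for two neighboring squares coincide up to bounded distortion, and a comparison organized by the same case split used to build the $F_Q$ shows that any two near-optimal nonnegative $\ct$ extensions of that data agree to within $O(M(\diam Q)^2)$ in $\ct$ on the overlap.

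Granting the consistency estimate, the classical Whitney patching lemma gives $\norm{\E(f,M)}_{\ct(\rt)} \le CM$, and $\E(f,M) = f$ on $E$ follows because every square $Q$ whose support meets a point $p \in E$ has $p \in S(Q)$, so $F_Q(p) = f(p)$ and $\E(f,M)(p) = \sum_Q \theta_Q(p) f(p) = f(p)$; the same local bounds show the sum is genuinely $\ctp(\rt)$, including at points of $E$. A last point requiring care is to fix the rule $Q \mapsto (S(Q), F_Q)$ once and for all so that $\E$ is well defined and has exactly the claimed bounded-depth dependence.
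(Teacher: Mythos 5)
Your overall architecture --- local nonnegative solutions glued by a nonnegative partition of unity, nonnegativity preserved for free by convexity, and a case split according to whether the local data is large or small compared to $M\delta_Q^2$ --- is the same as the paper's. However, the two steps you treat as routine or defer are precisely where the content of the theorem lies, and as stated both fail. First, bounded depth: you take $S(Q)$ to be ``the $O(1)$ points of $E$ in a bounded dilate of $Q$,'' but a Whitney decomposition with $\diam Q\le C\,\dist{Q}{E}$ gives no bound on that cardinality --- $E$ may contain arbitrarily many points in a fixed dilate of a Whitney square, and squares meeting $E$ are not controlled by your stopping rule at all --- so property (B) does not follow. The paper instead stops the Calder\'on--Zygmund decomposition when $\diam\,\sigma^\sharp(x,k_0)\ge C_0\delta_Q$ on $2Q$ (Definition \ref{def.Lz}); this still does not bound $\#(E\cap 2Q)$, but it forces $E\cap 2Q$ onto a $C^2$ curve with controlled slope and curvature (Lemma \ref{lem.CZ}(\ref{lem.CZ.graph})), so the local problem becomes one-dimensional, and bounded depth is inherited from the one-dimensional bounded-depth operators of Theorems \ref{thm.bd-1d} and \ref{thm.bd-1dpm} together with the Helly-type reduction of Lemma \ref{lem.sk-loc}, which produces the bounded set $S^{x_0}$ governing the prescribed jet.

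Second, the consistency estimate. Your proposed route --- that any two near-optimal nonnegative extensions of overlapping local data agree to within $O(M\delta_Q^2)$ in $\ct$ on the overlap --- is false. The set of one-jets at $x$ of nonnegative extensions of $f|_S$ with norm at most $CM$, i.e. $\Gamma_+(x,S,CM)$, generically has diameter comparable to $M$ rather than $M\delta_Q^2$ (already for $S=\void$), and near-optimality does not shrink it, since the infimum defining the trace norm is approached on a large convex set. The paper's resolution is to force each local extension to take a prescribed, canonically chosen jet $T^{x_Q^\sharp}(f,M)$ at a representative point $x_Q^\sharp$ with $\dist{x_Q^\sharp}{E}\ge c_0\delta_Q$ (Lemma \ref{lem.CZ}(\ref{lem.CZ.rep})). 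The decisive facts are that this jet lies in the ``universal'' set $\Gamma_+^\sharp(x_Q^\sharp,4k_0,CM)$ (Lemma \ref{lem.Txq}), and that any two jets in $\Gamma_+^\sharp$ attached to touching squares are automatically $O(M\delta_Q^{2-\abs{\alpha}})$-close (Lemma \ref{lem.G-G}) --- a consequence of the stopping rule, not of near-optimality. Making the prescribed jet canonical and of bounded depth is the role of the minimization \eqref{eq.P0} over the Helly-reduced set $S^{x_0}$, and reconciling it with nonnegativity of the local extension (subtracting the jet can destroy positivity) is exactly the content of the large/small dichotomy of Lemmas \ref{lem.perturb} and \ref{lem.bs}, which you gesture at but would still need to carry out.
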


A few remarks on Theorem \ref{thm.bd} are in order. First of all, in \cite{JL20}, we showed that the extension operator $\E$ is not linear. The constant $D$ appearing in Theorem \ref{thm.bd} is called the \underline{depth} of the extension operator $\E$. This generalizes the notion of the depth of a {\em linear} extension operator first studied by C. Fefferman in \cite{F05-L,F07-St} (for further discussion on the depth of linear extension operators see also G.K. Luli \cite{L10}). The depth of an extension operator (both linear and nonlinear) measures the computational complexity of the extension. The existence of a linear extension operator of bounded depth is one of the main ingredients for the Fefferman-Klartag \cite{FK09-Data-1, FK09-Data-2}  and Fefferman  \cite{F09-Data-3} algorithms for solving the interpolation problems without the nonnegative constraints; the algorithms in \cite{F09-Data-3,FK09-Data-1, FK09-Data-2} are essentially the best possible. 

Armed with Theorem \ref{thm.bd}, in a spirit similar to \cite{F09-Data-3,FK09-Data-1, FK09-Data-2}, we will provide algorithms for solving the following two problems, in the case when $m=n=2$. We remark in passing that Problem 2 is an open problem posed in the book \cite{FI20-book}. Here we announce the algorithms and in a sequel paper \cite{JL20-Alg} we will provide the detailed explanations. As far as we know, there has been no prior work on the two problems. 
	
	\begin{problem}\label{prob.norm}
		Let $ E \subset \Rn $ be a finite set. Let $ f : E \to \pos $. Compute the order of magnitude of 
		\begin{equation}
		\norm{f}_{C^m_+(E)} := \inf \set{ \norm{F}_{C^m(\Rn)} : F|_E = f\text{ and } F \geq 0 }\,. \label{eq.trace}
		\end{equation}
	\end{problem}
	
	\begin{problem}\label{prob.interpolant}
		Let $ E \subset \Rn $ be a finite set. Let $ f : E \to \pos $. Compute a nonnegative function $ F \in C^m(\Rn) $ such that $ F|_E = f $ and $ \norm{F}_{C^m(\Rn)} \leq C\norm{f}_{C^m_+(E)} $. 
	\end{problem}

	By \qt{order of magnitude} we mean the following: Two quantities $ M $ and $ \tilde{M} $ determined by $ E,f,m,n $ are said to have the \underline{same order of magnitude} provided that $ C^{-1}M \leq \tilde{M} \leq CM $, with $ C $ depending only on $ m $ and $ n $. To compute the order of magnitude of $ \tilde{M} $ is to compute a number $ M $ such that $ M $ and $ \tilde{M} $ have the same order of magnitude. 
	
	By \qt{computing a function $F$} from $ (E,f) $, we mean the following: After processing the input $ (E,f) $, we are able to accept query consisting of a point $ x \in \Rn $, and produce a list of numbers $ (f_\alpha(x) : \abs{\alpha} \leq m) $. The algorithm \qt{computes the function $ F $} if for each $ x \in \Rn $, we have $ \d^\alpha F(x) =f_\alpha(x) $ for $ \abs{\alpha} \leq m $. 
	
	We call the quantity in \eqref{eq.trace} the \qt{trace norm} of $ f $ (restricted to $ E $).

	We assume that our algorithms run on an idealized computer with standard von Neumann architecture able to process exact real numbers. We refer the readers to \cite{FK09-Data-2} for a discussion on finite-precision computing. 
	

	Recall the definition of the trace norm in \eqref{eq.trace}. 
	Our solution (see \cite{JL20-Alg}) to Problem \ref{prob.norm} consists of the following.

	\begin{algorithm}\label{alg.norm}
		\begin{center}
			Nonnegative $ C^2(\R^2) $ Interpolation Algorithm - Trace Norm
		\end{center}
		\begin{itemize}
			\item[] \textbf{DATA:} $ E \subset \R^2 $ finite with $ \#(E) = N $.
			\item[] \textbf{QUERY:} $f: E \to 
			\pos $.
			\item[] \textbf{RESULT}: The order of magnitude of $ \norm{f}_{C^2_+(E)} $. More precisely, the algorithm outputs a number $ M \geq 0 $ such that both of the following hold.
			\begin{itemize}
				\item We guarantee the existence of a function $ F \in C^2_+(\R^2) $ such that $ F|_E = f $ and $ \norm{F}_{C^2(\R^2)} \leq CM $.
				\item We guarantee there exists no $ F \in C^2_+(\R^2) $ with norm at most $ M $ satisfying $ F|_E = f $.
			\end{itemize}
			
			\item[] \textbf{COMPLEXITY:} 
			\begin{itemize}
				\item Preprocessing $ E $: at most $ CN\log N $ operations and $ CN $ storage.
				\item Answer query: at most $ CN $ operations.
			\end{itemize} 
		\end{itemize}
		
	\end{algorithm}

	Our solution (see \cite{JL20-Alg}) to Problem \ref{prob.interpolant} consists of the following.

	\begin{algorithm}\label{alg.interpolant}
		\begin{center}
			Nonnegative $ C^2(\R^2) $ Interpolation Algorithm - Interpolant
		\end{center}
		\begin{itemize}
			\item[] \textbf{DATA:} $ E \subset \R^2 $ finite with $ \#(E) = N $. $ f: E \to \pos $. $ M \geq 0 $. 
			\item[] \textbf{ORACLE:} $ \norm{f}_{C^2_+(E)} \leq M $. 
			\item[] \textbf{RESULT}: A query function that accepts a point $ x \in \mathbb{R}^2 $ and produces a list of numbers $ ( f_\alpha(x) : \abs{\alpha} \leq 2) $ that guarantees the following: There exists a function $ F \in C^2_+(\R^2) $ with $ \norm{F}_{C^2(\R^2)} \leq CM$ and $ F|_E = f $, such that $ \d^\alpha F(x) = f_\alpha(x) $ for $ \abs{\alpha} \leq 2 $. The function $ F $ is independent of the query point $ x $.
			
			\item[] \textbf{COMPLEXITY:} 
			
			\begin{itemize}
				\item Preprocessing $ (E,f) $: at most $ CN\log N $ operations and $ CN $ storage.
				
				\item Answer query: at most $ C\log N $ operations.
			\end{itemize}
			
		\end{itemize}

	\end{algorithm}




	To conclude the introduction, we sketch the proof of Theorem \ref{thm.bd}, sacrificing accuracy for the ease of understanding. 
	
	As in \cite{JL20}, we perform a Calder\'on-Zygmund decomposition for $ \rt $ into a collection of squares $ \Lambda = \set{Q} $, such that $ E $ localized to each neighborhood of $ Q\in \Lambda $ lies on a curve with slope $ \leq C $ and curvature $ \leq C\delta_Q^{-1} $, where $ \delta_Q $ is the sidelength of $ Q $. Furthermore, we require that nearby squares in $ \Lambda $ are comparable in sizes, so that a partition of unity can be applied to patch together local extensions. 
	
	On each $ Q \in \Lambda $, we construct an extension operator in the following way: First we treat the local data as one-dimensional, for which there already exists a bounded extension operator of bounded depth\cite{JL20}. Next, we extend in the transversal direction by constant. To guarantee Whitney compatibility among nearby extensions, we require the local extension operator to take a prescribed jet near the center of $ Q $. The prescribed jet will be \qt{universal} in the sense that, up to a Taylor error, it is a candidate jet of nearby extensions. 
	
	Having accomplished all of the above, we patch together each of the local extension operators by a partition of unity. 
	
	Here we have given an overly simplified account of our approach. In practice, we have to simultaneously control derivatives on small scales and handle subtraction with great care, in order to preserve nonnegativity. We will present the details in Section \ref{S.bd}.

	The dependence on the parameter $ M $ in Theorem \ref{thm.bd} is a manifestation of the delicacy in preserving nonnegativity. We hereby raise the following question.

	\begin{problem}
		Let $m,n $ be positive integers. Let $ E \subset \Rn $ be a finite set. Do there exist constants $C(m,n)$, $D(m,n)$, and a map $ \E : C^m_+(E) \to C^m_+(\Rn) $ such that the following hold?
		\begin{enumerate}[(A)]
			\item $ \E(f) = f $ on $ E $ and $ \norm{\E(f)} \leq C\norm{f}_{C^m_+(E)} $ for all $ f \in C^m_+(E) $.
			\item For each $ x \in \Rn $, there exists $ S(x) \subset E $ with $ \#(S(x)) \leq D $ such that for all $ f, g \in C^m_+(E) $ with $ f|_{S(x)} = g|_{S(x)} $, we have
			\begin{equation*}
			\d^\alpha \E(f)(x)=\d^\alpha\E(g)(x)\for\abs{\alpha} \leq 2.
			\end{equation*}
		\end{enumerate}
	\end{problem}

	 This paper is part of a literature on extension and interpolation, going back to the seminal works of H.~Whitney\cite{W34-1,W34-2,W34-3}. We refer the readers to \cite{FK09-Data-1,FK09-Data-2,F09-Data-3,JL20,FIL16,FIL16+,FI20-book} and references therein for the history and related problems.

	\begin{acknowledgment}
		We are indebted to Charles Fefferman, Kevin O'Neill, and Pavel Shvartsman for their valuable comments. We also thank all the participants in the 11th Whitney workshop for fruitful discussions, and Trinity College Dublin for hosting the workshop. 
		
		The first author is supported by the UC Davis Summer Graduate Student Researcher Award and the Alice Leung Scholarship in Mathematics. The second author is supported by NSF Grant DMS-1554733 and the UC Davis Chancellor's Fellowship. 
	\end{acknowledgment}

	\section{Preliminaries}
	
	\newcommand{\touch}{\leftrightarrow}
	
	We use $ c_*, C_*, C' $, etc. to denote universal constants. They may be different quantities in different occurrences. We will label them to avoid confusion when necessary.
	
	We assume that we are given an ordered orthogonal coordinate system on $ \rt $. We use $ \abs{\,\cdot\,} $ to denote Euclidean distance. We use $ B(x,r) $ to denote the disk of radius $ r $ centered at $ x $. Let $ A, B \subset \rt $, we write $ \dist{A}{B} := \inf_{a \in A, b \in B}\abs{a - b} $. 
	
	By a square, we mean a set of the form $ Q = [a,a+\delta) \times [b,b+\delta) $ for some $ a,b \in \R $ and $ \delta > 0 $. If $ Q $ is a square, we write $ \dq $ to denote the sidelength of the square. For $ \lambda > 0 $, we use $ \lambda Q $ to denote the square whose center is that of $ Q $ and whose sidelength is $ \lambda\dq $. Given two squares $ Q, Q' $, we write $ Q\touch Q' $ if $ closure(Q) \cap closure(Q') \neq \void $.
	
	A dyadic square is a square of the form $ Q = [2^k \cdot i, 2^k\cdot (i + 1) ) \times [2^k\cdot j, 2^k\cdot j + 1) $ for some $ i,j,k \in \mathbb{Z} $. Each dyadic square $ Q $ is contained in a unique dyadic square with sidelength $ 2\dq $, denoted by $ Q^+ $.

	We use $ \alpha,\beta \in \mathbb{N}_0^2 $ to denote multi-indices. We adopt the partial ordering $ \alpha \leq \beta $ if and only if $ \alpha_i \leq \beta_i $ for $ i = 1,2 $. 
	
	Let $ \Omega \subset \Rn $ be a set with nonempty interior $ \Omega_0 $. For nonnegative integers $ m,n $, we use $ C^m(\Omega) $ to denote the vector space of $ m $-times continuously differentiable real-valued functions up to the closure of $ \Omega $. For $ F \in C^m(\Omega) $, we define
	\begin{equation*}
	\norm{F}_{C^m(\Omega)} := \sup_{x \in \Omega_0} \brac{\sum_{\abs{\alpha} \leq m}\abs{\d^\alpha F(x)}^2}^{1/2}.
	\end{equation*}
	We write $ C^m_+(\Omega) $ to denote the collection of functions $ F \in C^m(\Omega) $ such that $ F \geq 0 $ on $ \Omega $. 
	
	Let $ E \subset \Rn $ be finite. We define the following.
	\begin{equation*}
	\begin{split}
	C^m(E) := \set{F|_E : F \in C^m(\Rn)} \cong \R^{\#(E)}
	\enskip&\text{ and }\enskip
	\norm{f}_{C^m(E)}  := \inf\set{\norm{F}_{C^m(\Rn)} : F |_E = f};\\
	C^m_+(E) := \set{F|_E : F \in C^m_+(\Rn)} \cong \pos^{\#(E)}
	\enskip&\text{ and }\enskip
	\norm{f}_{C^m_+(E)} := \inf\set{\norm{F}_{C^m(\Rn)} : F|_E = f
		\text{ and }F\geq 0}\,.
	\end{split}
	\end{equation*}

	\subsection{Convex sets and polynomials}
	
	\newcommand{\ring}{\mathcal{R}}

	We write $ \P $ to denote the space of affine polynomials on $ \rt $. It is a three-dimensional vector space. For $ x \in \R^2 $ and a function $ F $ differentiable at $ x $, we write $ \jet_x F $ to denote the one-jet of $ F $ at $ x $, which we identify with the degree-one Taylor polynomial
	\begin{equation*}
	\jet_x F(y) := F(x) + \grad F(x) \cdot (y-x)\,.
	\end{equation*}
	We use $ \ring_x $ to denote the ring of one-jets at $ x $. For $ P \in \ring_x $, we define
	\begin{equation}
		\abs{P}_{\ring_x} := \brac{\sum_{\abs{\alpha} \leq 1}\abs{\d^\alpha P(x)}^2}^{1/2}.
		\label{eq.ring-norm}
	\end{equation}
	
	
	\newcommand{\wt}{W^2}
	\newcommand{\wtp}{W^2_+}
	Let $ S \subset \Rn $ be a nonempty finite set. A \underline{Whitney field} on $ S $ is an array of polynomials
	\begin{equation*}
	\vec{P} := (P^x)_{x \in S}
	\enskip\text{ where }
	\enskip 
	P^x \in \ring_x
	\text{ for each } x \in S\,.
	\end{equation*}
	Given $ \vec{P} = (P^x)_{x \in S} $, we sometimes use the notation
	\begin{equation*}
	(\vec{P},x) := P^x
	\for x \in S\,.
	\end{equation*}
	We write $ \wt(S) $ to denote the vector space of all Whitney fields on $ S $. For $ \vec{P} = (P^x)_{s \in S} \in \wt(S) $, we define
	\begin{equation*}
	\norm{\vec{P}}_{\wt(S)} := \max_{\substack{x, y \in S,\,x \neq y,\,\abs{\alpha}\leq 2}}\frac{\abs{\d^\alpha(P^x - P^y)(x)}}{\abs{x - y}^{2-\abs{\alpha}}}.
	\end{equation*}
	$ \norm{\cdot}_{\wt(S)} $ is a seminorm on $ \wt(S) $. 
	
	We write $ \wtp(S) $ to denote a subcollection of $ \wt(S) $, such that $ \vec{P} \in W^2_+(S) $ if and only if for each $ x \in S $, there exists some $ M_x \geq 0 $ such that 
	\begin{equation*}
	(\vec{P},x)(y) + M_x\abs{y - x}^2 \geq 0
	\text{ for all } y \in \rt\,.
	\label{eq.MP}
	\end{equation*}
	For $ \vec{P} \in \wtp(S) $, we define
	\begin{equation*}
	\norm{\vec{P}}_{\wtp(S)} := \norm{\vec{P}}_{\wt(S)} + \max_{x \in S} \brac{\inf \set{ M_x \geq 0 : (\vec{P},x)(y) + M_x\abs{y-x}^2 \geq 0 \, \text{ for all } y \in \rt }}.
	\end{equation*}

	The next lemma is a Taylor-Whitney correspondence for $ \ctp(\rt) $. (A) is simply Taylor's theorem. See \cite{JL20,FIL16+} for a proof of (B). 
	
	\begin{lemma}\label{lem.WT}
		There exists a universal constant $ C_w $ such that the following holds.
		
		Let $ E \subset \rt $ be a finite set. 
		
		\begin{enumerate}[(A)]
			\item Let $ F \in \ctp(\rt) $. Let $ \vec{P} := (\jet_x F)_{x \in E} $. Then $ \vec{P} \in \wtp(E) $
			and
			$ \norm{\vec{P}}_{\wtp(E)} \leq C_w\norm{F}_{\ct(\rt)} $. 
			\item There exists a map $ T_w^E : \wtp(E) \to \ctp(\rt) $ such that $ \norm{T_w^E(\vec{P})}_{\ct(\rt)} \leq C_w\norm{\vec{P}}_{\wtp(E)} $ and $ \jet_x T_w^E(\vec{P}) = (\vec{P},x) $ for each $ x \in E $. 
		\end{enumerate}
	\end{lemma}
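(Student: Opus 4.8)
The plan is to obtain (A) directly from Taylor's theorem, and (B) from the nonnegative Whitney extension construction of \cite{JL20,FIL16+}.

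For (A): fix $F\in\ctp(\rt)$, write $M:=\norm{F}_{\ct(\rt)}$, and set $\vec P:=(\jet_xF)_{x\in E}$. To bound $\norm{\vec P}_{\wt(E)}$, for distinct $x,y\in E$ I would Taylor-expand $F$ and each first derivative $\d_iF$ about $y$ and evaluate at $x$, obtaining $\abs{(\jet_xF-\jet_yF)(x)}=\abs{F(x)-\jet_yF(x)}\le CM\abs{x-y}^2$ and $\abs{\d_i(\jet_xF-\jet_yF)(x)}=\abs{\d_iF(x)-\d_iF(y)}\le CM\abs{x-y}$, while $\d^\alpha(\jet_xF-\jet_yF)\equiv0$ for $\abs{\alpha}=2$ since affine polynomials have vanishing second derivatives; hence $\norm{\vec P}_{\wt(E)}\le CM$. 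For the nonnegativity term, fix $x\in E$: Taylor's theorem with remainder gives $\abs{F(y)-\jet_xF(y)}\le CM\abs{y-x}^2$ for every $y\in\rt$, so $(\vec P,x)(y)=\jet_xF(y)\ge F(y)-CM\abs{y-x}^2\ge-CM\abs{y-x}^2$ using $F\ge0$; thus $M_x:=CM$ is admissible in the definition of $\wtp$, so $\vec P\in\wtp(E)$ and $\norm{\vec P}_{\wtp(E)}\le CM$, which gives (A) with $C_w$ the constant so produced.

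For (B): set $M:=\norm{\vec P}_{\wtp(E)}$. For each $x\in E$ the definition of $\wtp(E)$ furnishes $M_x\le M$ with $(\vec P,x)(y)+M_x\abs{y-x}^2\ge0$ for all $y$; enlarging $M_x$ to $M$, the quadratic polynomials $\widehat P^x(y):=(\vec P,x)(y)+M\abs{y-x}^2$ are nonnegative on $\rt$ with $\jet_x\widehat P^x=(\vec P,x)$. Since every $\widehat P^x$ carries the same quadratic term $M\abs{\,\cdot\,}^2$, the differences $\widehat P^x-\widehat P^y$ are affine, and using $\norm{\vec P}_{\wt(E)}\le M$ one verifies the Whitney-compatibility estimates $\abs{\d^\alpha(\widehat P^x-\widehat P^y)(x)}\le CM\abs{x-y}^{2-\abs{\alpha}}$ for $\abs{\alpha}\le2$, $x,y\in E$. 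I would then run the classical patching: a Whitney decomposition of $\rt\setminus E$ into dyadic squares $\set{Q}$ with $\dq\approx\dist{Q}{E}$ and bounded overlap, a subordinate $\ct$ partition of unity $\set{\theta_Q}$, $\sum_Q\theta_Q\equiv1$ on $\rt\setminus E$, $\abs{\d^\beta\theta_Q}\le C\dq^{-\abs{\beta}}$, a nearest point $x_Q\in E$ for each $Q$, and $T_w^E(\vec P):=\sum_Q\theta_Q\,\widehat P^{x_Q}$ on $\rt\setminus E$, extended by $(\vec P,x)(x)$ at $x\in E$. Nonnegativity is then immediate, since this is a convex combination of the nonnegative polynomials $\widehat P^{x_Q}$; the compatibility estimates together with the standard telescoping argument give $T_w^E(\vec P)\in\ct(\rt)$ with $\norm{T_w^E(\vec P)}_{\ct(\rt)}\le CM$ and $\jet_xT_w^E(\vec P)=(\vec P,x)$ for each $x\in E$, i.e.\ (B).

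The routine ingredients are the Taylor estimates in (A) and the classical Whitney bookkeeping in (B); the one step that needs care is the lift in (B). Applying ordinary Whitney extension directly to $(\vec P,x)_{x\in E}$ produces a $\ct$ function that may be negative at scale $\dist{\,\cdot\,}{E}^2$, and this defect cannot be removed by adding a global multiple of a smoothed $\dist{\,\cdot\,}{E}^2$ without spoiling the $\ct$ bound. The device is precisely the \emph{uniform} quadratic lift $\widehat P^x=(\vec P,x)+M\abs{\,\cdot\,}^2$: membership in $\wtp(E)$ rather than merely $\wt(E)$ is what makes each lifted polynomial nonnegative, and using the same $M$ for every $x$ is what makes the second-order parts cancel in differences, so the Whitney-compatibility estimates---hence the norm bound and the jet-matching---survive the patching. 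This is the content carried out in detail in \cite{JL20,FIL16+}.
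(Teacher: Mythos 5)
Part (A) of your argument is correct and is exactly what the paper intends (the paper dismisses (A) as ``simply Taylor's theorem''), and your uniform quadratic lift $\widehat P^x=(\vec P,x)+M\abs{y-x}^2$ is indeed the right device for nonnegativity and for jet-matching: since $E$ is finite, your patched function coincides with the single polynomial $\widehat P^x$ on a punctured neighborhood of each $x\in E$, so it is $C^2$ there with the correct one-jet. Note the paper does not prove this lemma at all --- it defers (B) to \cite{JL20,FIL16+} --- so your construction has to stand on its own.

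It does not quite: the function $T_w^E(\vec P)=\sum_Q\theta_Q\,\widehat P^{x_Q}$ is unbounded whenever $M>0$. On a Whitney square $Q$ far from $E$ one has $\abs{y-x_Q}\approx\dist{y}{E}\to\infty$, so $\widehat P^{x_Q}(y)$ grows quadratically and its gradient grows linearly; since the paper's norm $\norm{F}_{\ct(\rt)}=\sup_x\brac{\sum_{\abs{\alpha}\le2}\abs{\d^\alpha F(x)}^2}^{1/2}$ controls $\sup\abs{F}$ and $\sup\abs{\nabla F}$, the asserted bound $\norm{T_w^E(\vec P)}_{\ct(\rt)}\le C_w M$ fails. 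The failure is not only in the far field: for two data points at mutual distance $R\gg1$ with $\vec P\equiv0$ and any lift parameter $M>0$, your formula produces values of order $MR^2\gg M$ midway between them. The missing step is the localization that constitutes the actual work in \cite{FIL16+,JL20}: cap the Whitney squares at sidelength $1$, use the lifted polynomials only on squares within bounded distance of $E$ (where $\abs{y-x_Q}\le C$, so the lift contributes at most $CM$), and transition to a bounded nonnegative choice on the remaining squares, verifying nonnegativity and the $C^2$ bound across the transition region. Relatedly, your ``standard telescoping'' only bounds $F-\widehat P^{x_0}$ against a base jet; to bound $F$ itself one also needs $\abs{\d^\alpha P^x(x)}\le CM$ for $\abs{\alpha}\le1$, which the displayed seminorm $\norm{\cdot}_{\wt(E)}$ does not literally supply (the paper's later functional $\Q^{x_0}$ in \eqref{eq.Qx} does include these zeroth- and first-order terms, which is clearly the intended reading and should be made explicit here).
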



	\newcommand{\s}{\sigma}
	\newcommand{\sk}{\sigma^\sharp}
	\newcommand{\G}{\Gamma_+}
	\newcommand{\Gk}{\Gamma_+^\sharp}

	\begin{definition}
		Let $ E \subset \rt $ be a finite set. 
		
		\begin{itemize}
			\item For $ x \in \rt $, $ S \subset E $, and $ k \geq 0$, we define
			\begin{equation*}
			\begin{split}
			\s(x,S) &:= \set{\jet_x\phi : \phi \in \ct(\rt),\enskip
				\phi|_S = 0, \text{ and }
				\norm{\phi}_{\ct(\rt)} \leq 1}, \text{ and }\\
			\sk(x,k) &:= \bigcap_{S \subset E, \#(S) \leq k} \s(x,S).
			\end{split}
			\end{equation*}
			
			\item Let $ f : E \to \pos $ be given. For $ x \in \rt $, $ S \subset E $, $ k \geq 0$, and $ M \geq 0 $, we define
			\begin{equation*}
			\begin{split}
			\G(x,S,M) &:= \set{\jet_x F : F \in \ctp(\rt),\enskip
				F|_S = f, \text{ and }
				\norm{F}_{\ct(\rt)} \leq M}, \text{ and }\\
			\Gk(x,k,M) &:= \bigcap_{S \subset E, \#(S) \leq k}\G(x,S,M).
			\end{split}
			\end{equation*}
		\end{itemize}

	\end{definition}



	The next lemma characterizes jets generated by nonnegative functions.

	\begin{lemma}[Lemma 7.2 of \cite{JL20}]\label{lem.TC}
		Suppose $ P \in \G(x,\void,M) $. Then 
		\begin{align}
			P(y) + CM\abs{y - x}^2 &\geq 0\text{ for all } y \in \rt.\\
			\abs{\nabla P} &\leq C'\sqrt{MP(x)}.\label{eq.TC2}\\
			\dist{x}{\set{P = 0}} &\geq C''M^{-1/2}\sqrt{P(x)}.
		\end{align}
	\end{lemma}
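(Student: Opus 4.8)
The plan is to unwind the definition and reduce everything to second-order Taylor expansion plus elementary facts about affine functions. Since $ S = \void $, the constraint $ F|_S = f $ is vacuous, so $ P \in \G(x,\void,M) $ means precisely that $ P = \jet_x F $ for some $ F \in \ctp(\rt) $ with $ \norm{F}_{\ct(\rt)} \leq M $; in particular $ P(x) = F(x) \geq 0 $. For the first inequality I would invoke Taylor's theorem with Lagrange remainder: for each $ y \in \rt $ there is a point $ \xi $ on the segment from $ x $ to $ y $ with $ F(y) - P(y) = \sum_{\abs{\alpha}=2}\frac{1}{\alpha!}\d^\alpha F(\xi)\,(y-x)^\alpha $. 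The bound $ \norm{F}_{\ct(\rt)} \leq M $ forces $ \abs{\d^\alpha F} \leq M $ pointwise for $ \abs{\alpha} = 2 $, so a routine estimate gives $ \abs{F(y) - P(y)} \leq CM\abs{y-x}^2 $ with a universal $ C $. Since $ F \geq 0 $, this yields $ P(y) = F(y) - \brac{F(y)-P(y)} \geq -CM\abs{y-x}^2 $, which is the claim.

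For the second inequality, write $ a := P(x) = F(x) \geq 0 $ and $ v := \grad P = \grad F(x) $, so that $ P(y) = a + v\cdot(y-x) $. If $ v = 0 $ there is nothing to prove; otherwise evaluate the first inequality along the ray $ y = x - t\,v/\abs{v} $ with $ t \geq 0 $ to obtain $ a - \abs{v}\,t + CM t^2 \geq 0 $ for all $ t \geq 0 $, hence for all real $ t $ (the inequality is automatic when $ t \leq 0 $, since every term is then nonnegative). Nonnegativity of this upward-opening quadratic in $ t $ forces its discriminant to be $ \leq 0 $, i.e. $ \abs{v}^2 \leq 4CM a $, which is exactly $ \abs{\grad P} \leq C'\sqrt{M\,P(x)} $ with $ C' = 2\sqrt{C} $. (If $ M = 0 $ then $ F \equiv 0 $, so $ P \equiv 0 $ and the estimate holds trivially.)

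Finally, for the third inequality: if $ \set{P = 0} = \void $ there is nothing to prove, and if $ v = 0 $ and $ a = 0 $ then $ P \equiv 0 $ and both sides vanish. In the remaining case $ \set{P = 0} $ is the affine line $ \set{y : v\cdot(y-x) = -a} $, whose distance to $ x $ equals $ \abs{a}/\abs{v} = P(x)/\abs{\grad P} $; combining this with the second inequality gives $ \dist{x}{\set{P=0}} = P(x)/\abs{\grad P} \geq P(x)/\brac{C'\sqrt{M\,P(x)}} = (C')^{-1}M^{-1/2}\sqrt{P(x)} $, as desired, so $ C'' = 1/C' $ works. The argument is short; the only care needed is bookkeeping of the universal constants produced by the $ \ell^2 $ form of $ \norm{\cdot}_{\ct(\rt)} $ and disposing of the degenerate cases ($ M = 0 $, $ \grad P = 0 $, empty zero set), and I do not anticipate any substantial obstacle.
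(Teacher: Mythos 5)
Your proof is correct. Note that this paper does not prove Lemma \ref{lem.TC} at all --- it imports it as Lemma 7.2 of \cite{JL20} --- so there is no internal argument to compare against; your self-contained derivation (Taylor with Lagrange remainder for the first estimate, the discriminant of the resulting quadratic along the ray $y = x - t\grad P/\abs{\grad P}$ for the gradient bound, and the exact distance formula $P(x)/\abs{\grad P}$ for the zero set) is the standard route to these inequalities, and your handling of the degenerate cases ($M=0$, $\grad P = 0$, empty or full zero set) is complete.
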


	We prove a variant of Lemma \ref{lem.TC} for locally defined functions with \qt{small} values.
	
	\begin{lemma}\label{lem.TC-small}
		Let $ Q \subset \rt $ be a square. Let $ x_0 \in 2Q $. Let $ F \in \ctp(\rt) $ with $ \norm{F}_{\ct(\rt)} \leq M $. Suppose $ F(x_0) \leq M\dq^2 $. Then the following hold.
		\begin{enumerate}[(A)]
			\item $ \abs{\d^\alpha F(x)} \leq CM\dq^2 $ for $ \abs{\alpha} \leq 2 $ and $ x \in 100Q $. 
			\item $ \jet_{x_0} F \in \G(x_0,\void,CM) $.
		\end{enumerate}

	\end{lemma}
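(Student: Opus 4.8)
The plan is to get (B) essentially for free and to deduce (A) from it by Taylor's theorem, the only substantive input being the familiar fact that a nonnegative $C^2$ function with a small value at a point also has a small gradient there.

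First I would dispose of (B). The given $F$ is itself an admissible competitor in the definition of $\G(x_0,\void,M)$: it lies in $\ctp(\rt)$, has $\ct(\rt)$-norm at most $M$, satisfies the vacuous constraint $F|_\void = f$, and has one-jet $\jet_{x_0}F$ at $x_0$; hence $\jet_{x_0}F\in\G(x_0,\void,M)\subset\G(x_0,\void,CM)$. For the variant actually used later, where $F$ is effectively only known near $Q$, one instead builds a global competitor out of the data $a:=F(x_0)$ and $b:=\grad F(x_0)$: assuming $a>0$, set
\[
G(y):=\theta(y)\,\Bigl[\,a+b\cdot(y-x_0)+\tfrac{\abs{b}^2}{2a}\,\abs{y-x_0}^2\,\Bigr],
\]
where $\theta\in\ctp(\rt)$ is a cutoff with $\theta\equiv1$ on $4Q$, $\supp{\theta}\subset CQ$, and $\abs{\d^\alpha\theta}\le C\dq^{-\abs\alpha}$. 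The bracketed quadratic is everywhere $\ge a/2>0$, so $G\ge0$; since $\theta\equiv1$ near $x_0$ we get $\jet_{x_0}G=a+b\cdot(y-x_0)=\jet_{x_0}F$; and a Leibniz estimate, using $\abs b\le\sqrt{2Ma}$ and $a\le M\dq^2$ — which give $\abs{b}^2/2a\le M$ and $\abs b\le\sqrt2\,M\dq$ — bounds the bracket and its first two derivatives on $\supp{\theta}$ by $CM\dq^2$, $CM\dq$, $CM$ respectively, whence $\norm{G}_{\ct(\rt)}\le CM$. When $a=0$ the gradient bound (below) forces $b=0$, so $\jet_{x_0}F\equiv0$ and $G\equiv0$ works.

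Now for (A). For $\abs\alpha=2$ there is nothing to prove: $\abs{\d^\alpha F(x)}\le\norm{F}_{\ct(\rt)}\le M$ at every point. Applying Lemma \ref{lem.TC}, in particular \eqref{eq.TC2}, to $\jet_{x_0}F\in\G(x_0,\void,M)$ gives $\abs{\grad F(x_0)}\le C\sqrt{M\,F(x_0)}\le C\sqrt{M\cdot M\dq^2}=CM\dq$; equivalently, restrict $F$ to a line through $x_0$ and use that a nonnegative $C^2$ function $g$ on $\R$ with $\abs{g''}\le M$ satisfies $g'(0)^2\le2Mg(0)$. Since $x_0\in2Q\subset100Q$ and $\diam(100Q)\le100\sqrt2\,\dq$, every $x\in100Q$ has $\abs{x-x_0}\le C\dq$; Taylor-expanding $\grad F$ about $x_0$ and using $\abs{\grad^2F}\le M$ gives $\abs{\grad F(x)}\le CM\dq+M\cdot C\dq=CM\dq$, and Taylor-expanding $F$ itself to second order about $x_0$, together with $F\ge0$, gives $0\le F(x)\le M\dq^2+CM\dq\cdot C\dq+\tfrac12M(C\dq)^2=CM\dq^2$. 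Altogether $\abs{\d^\alpha F(x)}\le CM\dq^{2-\abs\alpha}$ for $\abs\alpha\le2$ and $x\in100Q$, which is (A).

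The argument is short, and the only point needing care is the cutoff scale in the second construction for (B): the quadratic correction $\tfrac{\abs b^2}{2a}\abs{y-x_0}^2$ has Hessian $\le2M$ but grows without bound, so the cutoff must live precisely at scale $\sim\dq$ — the scale on which $F$ is known to be $O(M\dq^2)$ — for Leibniz's rule not to inflate the $C^2$ norm past $CM$. Everything else is routine Taylor bookkeeping over $100Q$, whose diameter is comparable to $\dq$.
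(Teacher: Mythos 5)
Your proof is correct. For part (A) you follow essentially the paper's route: the crux is the Glaeser-type bound $\abs{\grad F(x_0)}\le CM\dq$ — you obtain it from Lemma \ref{lem.TC}, estimate \eqref{eq.TC2}, or from the one-dimensional inequality $g'(0)^2\le 2Mg(0)$, while the paper derives it by Taylor-expanding and letting the direction of $x$ range over $100Q$ — after which Taylor propagation over $100Q$ gives the rest. (Note that, like the paper's own proof, you actually establish $\abs{\d^\alpha F(x)}\le CM\dq^{2-\abs{\alpha}}$, which is the form used downstream; the uniform exponent $2$ in the statement of (A) is a typo.) For part (B) you diverge: you correctly observe that under the hypotheses as literally stated $F$ itself is an admissible competitor, so $\jet_{x_0}F\in\G(x_0,\void,M)$ with no work at all. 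The paper instead sets $\tilde F=\psi F$ for a cutoff $\psi$ supported in $100Q$ and uses (A) plus Leibniz to get $\norm{\tilde F}_{\ct(\rt)}\le CM$; the point of that extra step is that it only uses control of $F$ on $100Q$, which is the form in which the lemma is actually invoked later (the functions it is applied to in Section \ref{S.bd} are only known to be nonnegative and bounded near $Q$). Your substitute competitor $\theta\cdot\bigl[a+b\cdot(y-x_0)+\tfrac{\abs{b}^2}{2a}\abs{y-x_0}^2\bigr]$ handles that localized situation equally well — the bracket is indeed $\ge a/2$, and your Leibniz bookkeeping is right, granting $\dq\le 1$ as holds in every application — though the paper's choice $\psi F$ is marginally more economical since (A) already supplies all the derivative bounds one needs for the product rule.
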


	\begin{proof}
		
		Without loss of generality, we may assume that $ x_0 = 0 $. 
		
		We prove (A) first.
		
		By Taylor's theorem, we see that, for $ x \in 100Q $,
		\begin{equation*}
		F(x) -\jet_0 F(x) \leq \abs{F(x) - \jet_0 F(x)} \leq C\abs{x}^2\cdot\sup_{y \in 100Q, \abs{\alpha} = 2}\abs{\d^\alpha F(y)} \leq CM\abs{x}^2.
		\end{equation*}
		Rearranging and expanding $ \jet_0 F(x) = F(0) + \nabla F(0)\cdot x $, we see that
		\begin{equation*}
		F(x) = F(0) + \nabla F(0)\cdot x +  CM\abs{x}^2 \geq 0
		\for x \in 100Q.
		\end{equation*}
		In particular, we see that
		\begin{equation}
		\nabla F(0) \cdot x \geq -F(0)- CM\dq^2 \geq C'M\dq^2
		\for x \in 100Q.
		\label{eq.2.1.4}
		\end{equation}
		Let $ x $ range over all the possible directions in $ \rt $ in \eqref{eq.2.1.4}. We see that
		\begin{equation}
		\abs{\nabla F(0)} \leq CM\dq.
		\label{eq.2.3.2}
		\end{equation}
		Thanks to \eqref{eq.2.3.2} and the assumption $ \abs{F(0)} \leq M\dq^2 $, we have
		\begin{equation}
		\abs{\d^\alpha F(0)} \leq CM\dq^{2-\abs{\alpha}}
		\for 
		\abs{\alpha} \leq 2.
		\label{eq.2.3.1}
		\end{equation}
		Applying the fundamental theorem of calculus to \eqref{eq.2.3.1}, we see that
		\begin{equation}
		\abs{\d^\alpha F(x)} \leq CM\dq^{2-\abs{\alpha}}
		\for
		\abs{\alpha} \leq 2, x \in 100Q.
		\label{eq.2.1.2}
		\end{equation}
		This proves Lemma \ref{lem.TC-small}(A). 
		
		Now we prove Lemma \ref{lem.TC-small}(B). 
		
		Let $ \psi $ be a cutoff function such that 
		\begin{equation}\label{eq.2.1.3}
		0 \leq \psi \leq 1,\,
		\psi \equiv 1 \text{ near }0,\,
		\supp{\psi} \subset 100Q, \text{ and }
		\abs{\d^\alpha \psi} \leq C\dq^{-\abs{\alpha}}.
		\end{equation}
		Consider the function
		\begin{equation*}
		\tilde{F}(x) := \psi(x) \cdot F(x).
		\end{equation*}
		Since $ \supp{\psi} \subset 100Q $, $ \tilde{F} $ is defined on all of $ \rt $ and $ \tilde{F} \geq 0 $. By \eqref{eq.2.1.2} and \eqref{eq.2.1.3}, we have 
		\begin{equation*}
		 \norm{\tilde{F}}_{\ct(\rt)}\leq CM.
		\end{equation*}
		Therefore, we have $ \jet_0F\in \G(0,\void,CM) $. This concludes part (B) and the proof of Lemma \ref{lem.TC-small}.

	\end{proof}

	\newcommand{\B}{\mathcal{B}}
	\begin{definition}
		For $ x \in \rt $ and $ \delta > 0 $, we define
		\begin{equation*}
		\B(x,\delta) := \set{P \in \P : \abs{\d^\alpha P(x)} \leq \delta^{2-\abs{\alpha}}\for \abs{\alpha}\leq 1.}. 
		\end{equation*}
	\end{definition}
	The significance of the object $ \B(x,\delta) $ is that Taylor's theorem can be reformulated as 
	\begin{equation*}
	\jet_x F - \jet_y F \in C\norm{F}_{\ct(\rt)} \B(x,\abs{x-y})
	\text{ for all } x,y \in \rt.
	\end{equation*}

	\namedtheorem[Helly's Theorem]{\label{thm.helly}Let $ \mathcal{F} $ be a finite collection of convex sets in $ \R^D $. Suppose every subcollection of $ \mathcal{F} $ of cardinality at most $ (D+1) $ has nonempty intersection. Then the whole collection has nonempty intersection.}{}
	
	The next lemma is a variant of Helly's Theorem. The proof can be found in \cite{F05-L}. 
	
	\begin{lemma}\label{lem.helly-v}
		Let $ \mathcal{F} $ be a finite collection of compact, convex, and symmetric subsets of $ \R^D $. Suppose $ 0 $ is an interior point for each $ K \in \mathcal{F} $. Then there exist $ K_1, \cdots, K_{D(D+1)} \in \mathcal{F} $ such that
		\begin{equation*}
		K_1 \cap \cdots \cap K_{D(D+1)} \subset C_D \cdot \brac{\bigcap\limits_{K \in \mathcal{F}} K } \,.
		\end{equation*}
		Here, $ C_D $ is a constant that depends only on $ D $.
	\end{lemma}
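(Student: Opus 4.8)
The plan is to derive the lemma from F.~John's theorem on maximal-volume inscribed ellipsoids. Put $ K^{\ast} := \bigcap_{K \in \mathcal{F}} K $. Since $ \mathcal{F} $ is finite and every $ K $ is compact, convex, symmetric with $ 0 \in \operatorname{int}(K) $, the intersection $ K^{\ast} $ is again a compact, convex, symmetric body with $ 0 \in \operatorname{int}(K^{\ast}) $, so it has a well-defined maximal-volume inscribed ellipsoid $ \mathcal{E} $, automatically centered at the origin. Because the assertion is invariant under invertible linear maps of $ \R^{D} $, I may apply one and assume $ \mathcal{E} = B := B(0,1) $; then $ B \subseteq K^{\ast} \subseteq K $ for every $ K \in \mathcal{F} $.

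Next I would apply John's theorem to the symmetric body $ K^{\ast} $ with its maximal ellipsoid $ B $: there are contact points $ u_{1},\dots,u_{N} \in \partial B \cap \partial K^{\ast} $ and weights $ c_{i} > 0 $ with $ \sum_{i} c_{i}\, u_{i} u_{i}^{T} = \operatorname{Id}_{D} $ and $ \sum_{i} c_{i} u_{i} = 0 $. Carath\'eodory's theorem for conic combinations, applied inside the $ \tfrac{D(D+1)}{2} $-dimensional space of symmetric matrices, lets one use at most $ \tfrac{D(D+1)}{2} $ of the rank-one terms $ u_{i}u_{i}^{T} $; symmetrizing --- replacing each surviving $ u_{i} $ by the antipodal pair $ \pm u_{i} $ with half the weight, which restores $ \sum_i c_i u_i = 0 $ for free --- one may take $ N \leq D(D+1) $. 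For each $ i $, since $ u_{i} \notin \operatorname{int}(K^{\ast}) = \bigcap_{K \in \mathcal{F}} \operatorname{int}(K) $ and the intersection is finite, there exists $ K_{i} \in \mathcal{F} $ with $ u_{i} \in \partial K_{i} $.

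Now set $ L := \bigcap_{i=1}^{N} K_{i} $, an intersection of at most $ D(D+1) $ members of $ \mathcal{F} $ (pad the list with repetitions to make it exactly $ D(D+1) $; this does not change $ L $). On one hand $ B \subseteq L $, because each $ K_{i} \supseteq K^{\ast} \supseteq B $. On the other hand each $ u_{i} $ lies in $ \partial B \cap \partial L $: indeed $ u_{i} \in B \subseteq L $, while $ u_{i} \in \partial K_{i} $ together with $ L \subseteq K_{i} $ forces $ u_{i} \notin \operatorname{int}(L) $. Hence $ B $, the $ u_{i} $, and the $ c_{i} $ satisfy the sufficiency criterion of John's theorem for the body $ L $, so $ B $ is the maximal-volume ellipsoid of $ L $, and the symmetric case of John's containment estimate yields $ L \subseteq \sqrt{D}\, B $. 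Combining with $ B \subseteq K^{\ast} $ gives $ L \subseteq \sqrt{D}\, B \subseteq \sqrt{D}\, K^{\ast} $, which is the assertion with $ C_{D} = \sqrt{D} $.

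The only substantial ingredient is John's theorem itself: the decomposition of the identity by few contact points --- with the sharp count producing $ D(D+1) $ in the origin-symmetric case --- and the converse direction, that such a decomposition certifies maximality of the inscribed ellipsoid and hence forces the containment $ L \subseteq \sqrt{D}\,B $. Everything else is bookkeeping: that $ K^{\ast} $ is a genuine body, that the boundary operation is compatible with finite intersections, and that the contact points of $ K^{\ast} $ persist as contact points of the smaller body $ L $. If one wished to avoid quoting John's theorem, the alternative is a direct extremal argument --- select the subfamily of size $ \leq D(D+1) $ whose intersection has least volume and run a perturbation/Lagrange-multiplier computation showing its own maximal ellipsoid already controls $ K^{\ast} $ --- but that essentially reproves John's theorem, so invoking it is the cleaner route.
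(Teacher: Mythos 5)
Your argument is correct. The paper does not prove this lemma itself but defers to \cite{F05-L}, and the proof there is essentially the John-ellipsoid argument you give: normalize so the maximal-volume ellipsoid of $ \bigcap_{K\in\mathcal{F}}K $ is the unit ball, use F.~John's decomposition of the identity at contact points (reduced to at most $ D(D+1) $ points by conic Carath\'eodory in the space of symmetric matrices, using symmetry to keep the first-moment condition), and select one member of $ \mathcal{F} $ per contact point so that the converse direction of John's theorem forces the sub-intersection inside $ \sqrt{D}\,B $.
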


	\subsection{Calder\'on-Zygmund squares}
	\label{subsec.CZ}
	
	\newcommand{\Lz}{\Lambda_0}
	
	For the rest of the paper, we fixed the following constants
	\begin{equation*}
	k_0 \geq 4
	\enskip
	\text{ and }
	\enskip
	C_0 \geq 1000\,.
	\end{equation*}

	\begin{definition}\label{def.Lz}
		Let $ E \subset \rt $ be given. A dyadic square $ Q \in \Lz $ if and only if all of the following are satisfied.
		\begin{itemize}
			\item $ \dq \leq 1 $.
			\item $ \diam_{\ring_x}{\sk(x,k_0)} \geq C_0\delta_Q $ for all $ x \in 2Q $. Here and below, the metric on $ \ring_x $ is induced by \eqref{eq.ring-norm}. 
			
			
			\item There exists $ \hat{x} \in 2Q^+ $ such that $ \diam_{\ring_{\hat{x}}}{\sk(\hat{x},k_0)} < 2C_0\delta_Q $. 
		\end{itemize}
	\end{definition}

	We will be mostly interested in the \qt{nontrivial} squares.
	
	\begin{definition}\label{def.Ls}
		We define
		\begin{equation*}
		\Ls := \set{Q \in \Lz : E \cap 2Q \neq \void}.
		\end{equation*}
	\end{definition}

	We include all the relevant properties of the squares in $ \Lz $ and $ \Ls $ in the following lemma. The proof for these properties can be found in \cite{JL20}.

	\newcommand{\xqs}{{x_Q^\sharp}}
	
	\begin{lemma}\label{lem.CZ}
		Let $ \Lz $ and $ \Ls $ be as in Definitions \ref{def.Lz} and \ref{def.Ls}, respectively. Then we have the following.
		
		\begin{enumerate}[(A)]
			\item\label{lem.CZ.cover} {\rm(Lemma 5.1 of \cite{JL20})} If $ Q, Q' \in \Lz $, and $ Q\touch Q'$, then $ \frac{1}{4}\delta_Q \leq \delta_{Q'} \leq 4\delta_Q $. As a consequence, we have the following bounded intersection property: For each $ Q \in \Lz $,
			\begin{equation}
			\eqindent
			\#\set{Q' \in \Lz : \frac{9}{8} Q' \cap \frac{9}{8}Q \neq \void} \leq C.
			\label{eq.bip}
			\end{equation}
			Moreover, we have
			\begin{equation*}
			\#(\Ls) \leq C\cdot \#(E).
			\end{equation*}

			\item\label{lem.CZ.graph} {\rm(Lemmas 5.4 and 5.5 in \cite{JL20})} Let $ Q \in \Ls $. Then up to a rotation, there exists $ \phi \in C^2(\R)$ such that 
			\begin{equation*}
			E \cap 2Q \in \set{(t,\phi(t)) : t\in \R}.
			\end{equation*}
			The orientation of the graph is given by the direction in which $ closure(\sk(x,k_0)) $ achieves $ \diam_{\ring_x}{\sk(x,k_0)} $ for some $ x \in E \cap 2Q $. Moreover, the function $ \phi $ satisfies the estimates
			\begin{equation*}
			\abs{\ddtm\phi(t)} \leq \delta_Q^{1-m}
			\for
			m = 1,2.
			\end{equation*}
			As a consequence, there exists a diffeomorphism $ \Phi : \rt\to\rt $ defined by 
			\begin{equation*}
			\Phi(t_1,t_2) \sim (t_1,t_2 -\phi(t_1)).
			\end{equation*}
			The symbol $ \sim $ means the definition is up to a rotation about the origin. In particular, $ \Phi $ satisfies
			\begin{equation*}
			\Phi(E \cap 2Q) \subset \R\times \set{t_2 = 0}
			\text{ and }
			\abs{\nabla^m\Phi}, |\nabla^m\Phi^{-1}| \leq C\delta_Q^{1-m}
			\for m = 1,2.
			\end{equation*}

			\item\label{lem.CZ.rep} {\rm(Lemma 5.6 in \cite{JL20})} Let $ Q \in \Lz $. There exists $ \xqs \in Q $ such that
			\begin{equation*}
			\dist{\xqs}{E} \geq c_0\delta_Q.
			\end{equation*}
			Here, $ c_0 $ is a universal constant.

		\end{enumerate}

	\end{lemma}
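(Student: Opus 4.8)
The whole lemma is the package of structural facts that accompanies a Calder\'on--Zygmund decomposition, and I would organize the argument around the scalar function $\ell(x):=\diam_{\ring_x}\sk(x,k_0)$. First the a priori bounds on $\ell$. Upper bound: $\sk(x,k_0)\subset\s(x,\void)$ and the latter lies in the $\ring_x$-unit ball, so $\ell(x)\le 2$. Positivity: since $E$ is finite, a bump $y\mapsto\eta(\abs{y-x}/\rho)\,a\cdot(y-x)$ with $\rho$ below the spacing of $E$ near $x$ and $\abs a\sim\rho$ has $C^2$-norm $\lesssim 1$, vanishes on every $\le k_0$-point subset of $E$, and contributes a nonzero jet to $\sk(x,k_0)$, whence $\ell(x)>0$. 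The essential estimate is a slow-variation (Lipschitz-type) bound: there is a universal $C_*$, independent of $C_0$, with $\abs{\ell(x)-\ell(y)}\le C_*\abs{x-y}$ for $\abs{x-y}\le 1$. I would prove this from the Taylor reformulation $\jet_x\phi-\jet_y\phi\in C\B(x,\abs{x-y})$ (so $\s(x,S)\subset\s(y,S)+C\B(x,\abs{x-y})$ uniformly in $S$), the convexity of the sets $\s(\cdot,S)$, and the quantitative Helly estimate of Lemma \ref{lem.helly-v} to pass from the intersection $\sk$ to boundedly many of its defining sets, exactly as in the decomposition constructed in \cite{JL20}.

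\emph{Part (A).} With the Lipschitz bound in hand, Definition \ref{def.Lz} exhibits $\Lz$ as the maximal dyadic squares $Q$ with $\delta_Q\le 1$ such that $\ell\ge C_0\delta_Q$ on $2Q$: this property is inherited by dyadic children, while $\ell\le 2<C_0$ makes it fail at scale $1$, so a maximal such square surrounds every point, and $\Lz$ is a disjoint cover of $\rt$. Suppose $Q\touch Q'$ with $\delta_{Q'}\le\delta_Q$ (a power of two). Then $2Q,2Q^+,2Q',2(Q')^+$ all lie in a disk of radius $\le C'\delta_Q$. If $\delta_{Q'}<\tfrac14\delta_Q$, take the witness $\hat x'\in2(Q')^+$ with $\ell(\hat x')<2C_0\delta_{Q'}\le\tfrac14 C_0\delta_Q$ and any $x_0\in 2Q$ (so $\ell(x_0)\ge C_0\delta_Q$); the Lipschitz bound gives $\ell(\hat x')\ge\ell(x_0)-C_*\abs{x_0-\hat x'}\ge(C_0-C'C_*)\delta_Q\ge\tfrac12 C_0\delta_Q$, since $C_0\ge1000$ dominates the universal quantity $C'C_*$ --- contradicting $\ell(\hat x')<\tfrac14 C_0\delta_Q$. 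Hence $\tfrac14\delta_Q\le\delta_{Q'}\le4\delta_Q$. Inequality \eqref{eq.bip} is then the standard count of pairwise-disjoint dyadic squares of comparable sidelength clustered near $Q$, and $\#(\Ls)\le C\#(E)$ follows by charging each $Q\in\Ls$ to a point of $E\cap2Q$, each point being charged $O(1)$ times.

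\emph{Part (B).} Fix $Q\in\Ls$ and $x_*\in E\cap2Q$; Definition \ref{def.Lz} and the Lipschitz bound give $\ell(x_*)\sim\delta_Q$. Let $v$ be the direction in the statement, along which $\overline{\sk(x_*,k_0)}$ realizes its $\ring_{x_*}$-diameter. The heart of (B) is that $\sk(x_*,k_0)$ is large ($\sim\delta_Q$) along $v$ but small in the two transverse directions of $\ring_{x_*}$ --- the smallness coming, again via Lemma \ref{lem.helly-v}, from the upper bound on $\ell$ near $x_*$ --- and, read through the definition of $\s(x_*,S)$ as jets of $C^2$-normalized functions vanishing on $S$, this forces the points of $E\cap2Q$, expressed over the $v$-axis, to be nearly collinear at first order and to have curvature $O(\delta_Q^{-1})$ at second order on every $\le k_0$-point subsample; since $k_0\ge4$, a one-dimensional Whitney/interpolation step then assembles these into a single $\phi\in C^2(\R)$ with $E\cap2Q\subset\set{(t,\phi(t)):t\in\R}$, $\abs{\phi'}\le C$, $\abs{\phi''}\le C\delta_Q^{-1}$; choosing $k_0,C_0$ large enough once and for all sharpens these to $\abs{\ddtm\phi}\le\delta_Q^{1-m}$ for $m=1,2$. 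Finally $\Phi(t_1,t_2)\sim(t_1,t_2-\phi(t_1))$ is a diffeomorphism whose derivative bounds $\abs{\nabla^m\Phi},\abs{\nabla^m\Phi^{-1}}\le C\delta_Q^{1-m}$ follow from the chain rule.

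\emph{Part (C), and the main obstacle.} For $Q\in\Lz$: if $E\cap2Q=\void$, every point of $E$ lies at distance $\ge\tfrac12\delta_Q$ from $Q$, so the center of $Q$ serves as $\xqs$. Otherwise $Q\in\Ls$, and by (B) the set $E\cap2Q$ lies on a graph over $v$ of slope $\le1$; its $c_0\delta_Q$-neighborhood meets $Q$ in area $\le Cc_0\delta_Q^2<\delta_Q^2=\abs Q$ once $c_0$ is chosen small, so some $\xqs\in Q$ lies $\ge c_0\delta_Q$ from $E\cap2Q$, and automatically $\ge\tfrac12\delta_Q$ from $E\setminus2Q$. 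The genuinely delicate step is the structural claim inside (B): converting the \qt{one-large/two-small} shape of the convex body $\sk(x_*,k_0)\subset\R^3$ into a concrete graph through $E\cap2Q$ with low curvature and constant-free bounds. That is where the calibration $k_0\ge4$, $C_0\ge1000$ is used, and where Lemma \ref{lem.helly-v} together with a quadratic fit through $\le k_0$ points does the real work; parts (A) and (C) are comparatively soft once the slow-variation estimate for $\ell$ is available.
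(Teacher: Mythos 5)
The paper does not actually prove this lemma: each of (A), (B), (C) is imported verbatim from the companion paper \cite{JL20} (its Lemmas 5.1, 5.4--5.5, and 5.6), and the text surrounding the statement says only that the proofs can be found there. So there is no in-paper argument to compare yours against; what you have written is a blind reconstruction of the cited proofs. As a roadmap it is on target: organizing everything around $\ell(x)=\diam_{\mathcal{R}_x}\sigma^\sharp(x,k_0)$, deriving comparability of touching squares from a slow-variation estimate for $\ell$ together with the witness point in $2Q^+$, and reading the graph structure of $E\cap 2Q$ off the \qt{one large, two small} shape of $\sigma^\sharp$ is exactly the architecture of \cite{JL20}. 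Your part (C) (small area of a $c_0\delta_Q$-neighborhood of a Lipschitz graph inside $Q$, plus the trivial $\tfrac12\delta_Q$ bound for $E\setminus 2Q$) is essentially complete as written.

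As a proof, however, the proposal has two genuine gaps, located precisely at the steps you yourself flag as \qt{the real work.} First, the bound $\abs{\ell(x)-\ell(y)}\le C_*\abs{x-y}$: the containment $\sigma(x,S)\subset\sigma(y,S)+C\mathcal{B}(x,\abs{x-y})$ holds for each fixed $S$, but intersecting over all $S$ with $\#(S)\le k_0$ does not commute with the Minkowski sum, and passing from $\bigcap_S\bigl[\sigma(y,S)+C\mathcal{B}\bigr]$ to $C'\bigl[\sigma^\sharp(y,\cdot)+\mathcal{B}\bigr]$ is the nontrivial point (it is why the paper carries both $\sigma^\sharp(\cdot,k_0)$ and $\sigma^\sharp(\cdot,4k_0)$); invoking Lemma \ref{lem.helly-v} by name does not discharge it, and without it both the touching-squares argument in (A) and the upper bound $\ell(x_*)\le C\delta_Q$ used in (B) are unsupported. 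Moreover, your contradiction in (A) needs $C_0$ to dominate the universal constant $C_*C'$ produced by that argument, which must be checked against the fixed choice $C_0\ge 1000$ rather than asserted. Second, and more seriously, part (B) --- extracting a single $\phi\in C^2(\R)$ through $E\cap 2Q$ with $\abs{\phi'}\le 1$ and $\abs{\phi''}\le\delta_Q^{-1}$ from the shape of the convex bodies $\sigma^\sharp(x,k_0)\subset\R^3$ --- is described, not proven: you never show why the diameter condition forces the two transverse directions of $\sigma^\sharp$ to be small, nor how the finite subsamples of $\le k_0$ points assemble into one globally defined graph with constant-free derivative bounds. That is the entire content of Lemmas 5.4--5.5 of \cite{JL20}. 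In short: correct skeleton, consistent with the source the paper defers to, but the two load-bearing estimates are cited rather than established, so this is a plan for a proof rather than a proof.
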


	\begin{lemma}\label{lem.comp}
		Let $0  < \delta \leq 1 $. Let $ \Phi : \rt\to \rt $ be a $ C^2 $-diffeomorphism such that
		\begin{equation*}
		\abs{\nabla^m\Phi} \leq \delta^{1-m}
		\for m = 1,2.
		\end{equation*}
		Let $ \Omega \subset \rt $ be a domain, and let $ F \in \ct(\Omega) $. Suppose
		\begin{equation*}
		\abs{\d^\alpha F(x)} \leq M\dq^{2-\abs{\alpha}}\for
		\abs{\alpha} \leq 2, x \in \Omega.
		\end{equation*}
		Then $ \norm{F\circ \Phi}_{\ct(\Phi^{-1}(\Omega))} \leq CM $.
	\end{lemma}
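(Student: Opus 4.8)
The plan is to prove this by the chain rule, bounding each derivative $\partial^\alpha(F\circ\Phi)$ for $|\alpha|\le 2$ in terms of the given pointwise bounds on $F$ and the bounds on $\nabla^m\Phi$. First I would record the zeroth-order estimate: for $y\in\Phi^{-1}(\Omega)$ we have $|F(\Phi(y))|\le M\delta^{2}$ directly from the hypothesis with $|\alpha|=0$, using $\delta_Q=\delta$ (I read $\dq$ here as $\delta$; the statement presumably means $\delta$). Since $\delta\le 1$ this is certainly $\le CM$.

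For the first derivatives, the chain rule gives $\partial_i(F\circ\Phi)(y)=\sum_j (\partial_j F)(\Phi(y))\,\partial_i\Phi_j(y)$, so $|\partial_i(F\circ\Phi)(y)|\le C\cdot(M\delta)\cdot\|\nabla\Phi\|\le C\cdot M\delta\cdot\delta^{0}=CM\delta\le CM$, where I used the $|\alpha|=1$ bound $|\partial_j F|\le M\delta$ and $|\nabla\Phi|\le\delta^{0}=1$. For the second derivatives, $\partial_i\partial_k(F\circ\Phi)$ has two types of terms: $\sum_{j,l}(\partial_j\partial_l F)(\Phi(y))\,\partial_i\Phi_j(y)\,\partial_k\Phi_l(y)$, bounded by $C\cdot M\cdot 1\cdot 1=CM$ using the $|\alpha|=2$ bound $|\partial_j\partial_l F|\le M$ and $|\nabla\Phi|\le 1$; and $\sum_j(\partial_j F)(\Phi(y))\,\partial_i\partial_k\Phi_j(y)$, bounded by $C\cdot(M\delta)\cdot\|\nabla^2\Phi\|\le C\cdot M\delta\cdot\delta^{-1}=CM$ using $|\nabla^2\Phi|\le\delta^{-1}$. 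Combining, $\sum_{|\alpha|\le 2}|\partial^\alpha(F\circ\Phi)(y)|^2\le CM^2$ uniformly in $y\in\Phi^{-1}(\Omega)_0$, which is the claim.

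There is essentially no obstacle here: the estimate is a bookkeeping exercise in which the scaling is arranged precisely so that every term comes out at size $CM$ — the $\delta$-weights on the $F$-derivatives exactly cancel the $\delta^{-1}$ blow-up of $\nabla^2\Phi$ and the $\delta^{0}$ size of $\nabla\Phi$. The only points requiring a word of care are: making sure $\Phi^{-1}(\Omega)$ has nonempty interior so the $C^2$ norm is well-defined (this follows since $\Phi$ is a diffeomorphism and $\Omega$ is a domain), and noting that continuity of the relevant derivatives of $F\circ\Phi$ up to the closure is immediate from $F\in C^2(\Omega)$ and $\Phi\in C^2$. If one wanted to be fully careful about the constant's dependence, one simply tracks that all implied constants depend only on the (fixed, dimension-two) combinatorics of the chain rule.
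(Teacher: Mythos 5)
Your proposal is correct and matches the paper's proof: the paper likewise expands $\d_{ij}(F\circ\Phi)$ by the chain rule and observes that the $\delta$-weights on the derivatives of $F$ cancel against the bounds $\abs{\nabla\Phi}\leq 1$ and $\abs{\nabla^2\Phi}\leq\delta^{-1}$. Your reading of $\dq$ in the hypothesis as $\delta$ is also the intended one.
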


	\begin{proof}
		We expand $ \Phi = (\Phi_1, \Phi_2) $ in coordinates. Then\begin{equation*}
		\d_{ij}F\circ \Phi = \sum_{k,l = 1}^2 c_{kl}\cdot \d_i\Phi_k\cdot \d_j\Phi_l \cdot\d_{kl}F\circ \Phi + \sum_{k = 1}^2\d_{ij}\Phi_k \cdot \d_kF\circ \Phi.
		\end{equation*}
		Lemma \ref{lem.comp} follows from the derivative estimates of $ F $ and $ \Phi $.
	\end{proof}

	\begin{lemma}[Lemma 5.7 in \cite{JL20}]\label{lem.sk-ball}
		Let $ \Ls $ be as in Definition \ref{def.Ls}. Let $ Q \in \Ls $, and let $ \xqs $ be as in Lemma \ref{lem.CZ}(\ref{lem.CZ.rep}). Then 
		\begin{equation*}
		\eqindent
		\sk(\xqs,4k_0) \subset C\cdot \B(\xqs,\delta_Q),
		\end{equation*}
		for some universal constant $ C $.
	\end{lemma}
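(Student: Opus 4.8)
The plan is to pass through the diffeomorphism $\Phi$ from Lemma~\ref{lem.CZ}(\ref{lem.CZ.graph}), which flattens $E\cap 2Q$ to a horizontal segment, use it to relate the one-dimensional geometry of $\sk(\xqs,4k_0)$ to the abstract assumption $\diam_{\ring_x}\sk(x,k_0)\geq C_0\delta_Q$ from the definition of $\Lz$, and then convert a diameter lower bound into the containment $\sk(\xqs,4k_0)\subset C\cdot\B(\xqs,\delta_Q)$. The key observation is that for a jet $P=\jet_{\xqs}\phi$ with $\phi|_S=0$, $\norm{\phi}_{\ct(\rt)}\le 1$, and $S\subset E$ with $\#(S)\le 4k_0$, Taylor's theorem forces $\abs{\d^\alpha P(\xqs)}$ to be small whenever $\xqs$ is close to enough points of $S$; but since $\dist{\xqs}{E}\geq c_0\delta_Q$, proximity alone does not suffice, so instead one exploits that along the curve $E\cap 2Q$ the set $S$ effectively behaves like points on a line, and a one-dimensional argument pins down the derivatives of $P$ in the tangential direction. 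The transversal direction is then handled by the curvature/slope bound $\abs{\ddtm\phi}\le\delta_Q^{1-m}$.

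The steps I would carry out, in order. First, reduce to the model case: apply $\Phi$ to replace $\phi$ by $\phi\circ\Phi^{-1}$, which by Lemma~\ref{lem.comp} has $\ct$ norm $\le C$ on the relevant neighborhood, and which vanishes on $\Phi(S)\subset\R\times\set{t_2=0}$; here $\Phi(\xqs)$ has both coordinates comparable to $\delta_Q$ in the relevant sense. Second, recall from the definition of $\Lz$ that $\diam_{\ring_x}\sk(x,k_0)\ge C_0\delta_Q$ for $x\in 2Q$, together with the complementary bound $\diam_{\ring_{\hat x}}\sk(\hat x,k_0)<2C_0\delta_Q$ at some $\hat x\in 2Q^+$; transplanting the first estimate to $\xqs$ (which lies in $Q\subset 2Q$) and comparing one-jets at $\xqs$ versus $\hat x$ via $\B(\xqs,C\delta_Q)$, conclude $\diam_{\ring_{\xqs}}\sk(\xqs,k_0)\le C\delta_Q$. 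Third — and this is where the factor $4k_0$ enters — since $\sk(\xqs,4k_0)\subset\sk(\xqs,k_0)$, the diameter of $\sk(\xqs,4k_0)$ is also $\le C\delta_Q$; because $\sk(\xqs,4k_0)$ is a symmetric convex set (intersection of the symmetric convex sets $\s(\xqs,S)$) whose diameter in $\ring_{\xqs}$ is $O(\delta_Q)$, it is contained in a ball of radius $O(\delta_Q)$ about $0$ in the $\abs{\cdot}_{\ring_{\xqs}}$ metric. Fourth, translate a ball in $\abs{\cdot}_{\ring_{\xqs}}$ into the $\B(\xqs,\delta_Q)$ language: by definition $\abs{P}_{\ring_{\xqs}}\le C\delta_Q$ gives $\abs{P(\xqs)}\le C\delta_Q$ and $\abs{\nabla P}\le C\delta_Q$. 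I need $\abs{P(\xqs)}\le C\delta_Q^2$ and $\abs{\nabla P}\le C\delta_Q$; the gradient bound is immediate, and the value bound follows because $\delta_Q\le 1$ is not quite enough — here I would use that $P=\jet_{\xqs}\phi$ with $\phi$ vanishing somewhere within distance $O(\delta_Q)$ of $\xqs$ along the curve (taking $S$ to be a single nearby point of $E\cap 2Q$, which is permissible since $4k_0\ge 1$), so $\abs{P(\xqs)}=\abs{P(\xqs)-\phi(y)}\le\abs{\nabla P}\cdot\abs{\xqs-y}+C\abs{\xqs-y}^2\le C\delta_Q\cdot\delta_Q=C\delta_Q^2$. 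This yields $\sk(\xqs,4k_0)\subset C\cdot\B(\xqs,\delta_Q)$.

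The main obstacle I anticipate is the third step: converting the abstract hypothesis on $\diam_{\ring_{\xqs}}\sk(\xqs,k_0)$ into genuine coordinatewise control on every jet in $\sk(\xqs,4k_0)$. A diameter bound on a symmetric convex set bounds the set itself only if the set contains $0$ (which it does, since $0=\jet_{\xqs}0$ with $\phi\equiv 0$ is admissible), so symmetry plus the diameter bound does give $\sk(\xqs,4k_0)\subset\set{P:\abs{P}_{\ring_{\xqs}}\le\diam}$; the delicate part is the passage between the $k_0$-version and the $4k_0$-version and the transplantation of the diameter bound from arbitrary $x\in 2Q$ to the specific point $\xqs$, which requires keeping careful track of Taylor errors of size $O(\delta_Q)$ in the $\B$-metric and invoking $\delta_Q\le 1$. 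I expect this to be essentially bookkeeping already carried out in \cite{JL20} for the companion lemmas, so the proof should be short, citing Lemma~\ref{lem.CZ} and Definition~\ref{def.Lz} for the heavy lifting.
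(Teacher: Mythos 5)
The paper does not prove this lemma; it is imported verbatim as Lemma~5.7 of \cite{JL20}, so there is no in-paper argument to compare against line by line. Your overall skeleton is nonetheless the right (and essentially the only) one: the gradient bound $\abs{\nabla P}\leq C\dq$ must come from the third bullet of Definition~\ref{def.Lz} (the existence of $\hat x\in 2Q^+$ with $\diam_{\ring_{\hat x}}\sk(\hat x,k_0)<2C_0\dq$, combined with the fact that $\sk$ is symmetric, convex, and contains $0$), and the upgrade from $\abs{P(\xqs)}\leq C\dq$ to $\abs{P(\xqs)}\leq C\dq^2$ must come from Taylor's theorem at a point $y\in E\cap 2Q$, exactly as in your fourth step, which is correct as written. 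The detour through the diffeomorphism $\Phi$ in your first step is unnecessary; the lemma is purely convex-geometric and never uses the graph structure.

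The genuine gap is the transplantation step. You assert that ``comparing one-jets at $\xqs$ versus $\hat x$'' yields $\diam_{\ring_{\xqs}}\sk(\xqs,k_0)\leq C\dq$. This does not follow from a direct comparison: a jet $P\in\sk(\xqs,4k_0)$ is realized, for each admissible $S$, by a \emph{different} function $\phi_S$ with $\phi_S|_S=0$, so shifting the base point produces a different jet $\jet_{\hat x}\phi_S\in\s(\hat x,S)$ for each $S$. Taylor's theorem only gives $P\in\bigcap_S\brac{\s(\hat x,S)+C\B(\hat x,\dq)}$, and an intersection of sums is not a sum of intersections. To produce a single $P''\in\sk(\hat x,k_0)$ with $P-P''\in C\B(\hat x,\dq)$ one must apply \hyperref[thm.helly]{Helly's Theorem} in $\P\cong\R^3$ to the finite family of convex sets $\s(\hat x,S)\cap\brac{P+C\B(\hat x,\dq)}$ with $\#(S)\leq k_0$: any four of them meet at $\jet_{\hat x}\phi_{S_1\cup\cdots\cup S_4}$, precisely because $P\in\s(\xqs,S_1\cup\cdots\cup S_4)$ and $\#(S_1\cup\cdots\cup S_4)\leq 4k_0$. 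This is the true reason the lemma carries $4k_0$ on the left-hand side --- not the trivial monotonicity $\sk(\xqs,4k_0)\subset\sk(\xqs,k_0)$ that you invoke --- and, relatedly, your intermediate claim with $k_0$ at $\xqs$ is not derivable from the hypotheses (one would need an upper bound on $\sk(\hat x,k_0/4)$, which is not available); what the argument actually delivers is a bound on $\sk(\xqs,4k_0)$ directly. You flagged this passage as the delicate point and deferred it to \cite{JL20}, but since it is the entire content of the lemma, the proposal as written is incomplete.
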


	\section{Extension operator of bounded depth}
	\label{S.bd}

	We use $ t $ to denote the variable on $ \R $. We write $ \d^m $ to denote $ \ddtm $. 
	
	We recall the following theorems proven in \cite{JL20}.
	
	\newcommand{\Eb}{\overline{\E}}
	\newcommand{\Ebpm}{\overline{\E}_{\pm}}
	
	\begin{subtheorem}{theorem}
		
		\renewcommand{\ddtm}{\d^m}
		
		\begin{theorem}[Theorem 2.A in \cite{JL20}]\label{thm.bd-1d}
			Let $ E \subset \R $ be a finite set. There exist universal constants $ C,D $ and an operator $ \Eb : C^2_+(E) \to C^2_+(\R) $ such that the following hold.
			\begin{enumerate}[(i)]
				\item $ \Eb (f) \big|_E = f $ for all $ f \in C^2_+(E) $.
				\item $ \norm{\Eb (f)}_{C^2(\R)} \leq C\norm{f}_{C^2_+(E)} $.
				\item 
				
				Moreover, for each $ t \in \R $, there exists $ S(t) \subset E $ with $ \#(S(t)) \leq D $, such that for all $ f,g \in C^2_+(E) $ with $ f|_{S(t)} = g|_{S(t)} $, we have
				\begin{equation*}
				\eqindent
				\ddtm (\Eb(f))(t) = \ddtm(\Eb (g))(t)
				\enskip
				\text{ for }
				m = 0,1,2\,.
				\end{equation*}

			\end{enumerate}
			
		\end{theorem}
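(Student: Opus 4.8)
The plan is to build $\Eb$ by one-dimensional Whitney localization. First I would decompose $\R$ into a family of intervals $\Lambda=\set{Q}$ adapted to $E$ --- the one-dimensional analogue of the Calder\'on--Zygmund decomposition of Section~\ref{subsec.CZ}, arranged so that intervals meeting a common point have comparable lengths and each $Q$ either avoids $E$ (together with a small dilate) or is a tiny interval carrying a single point of $E$. On each $Q$ I would construct a nonnegative local extension $F_Q\in\ctp(\R)$ that agrees with $f$ on $E\cap Q$, has controlled $C^2$ norm, and takes a \emph{prescribed} one-jet at each of the two endpoints of $Q$. Finally I would patch: taking a nonnegative $C^2$ partition of unity $\set{\theta_Q}$ subordinate to a mild dilate of $\Lambda$, set $\Eb(f):=\sum_Q\theta_Q F_Q$. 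Since $\sum_Q\theta_Q\equiv 1$ with $\theta_Q\geq 0$ and each $F_Q\geq 0$, the patched function is nonnegative for free; this is the sense in which preserving nonnegativity is easier here than in dimension two. What remains is to bound $\norm{\Eb(f)}_{\ct(\R)}$ by $C\norm{f}_{\ctp(E)}$ and to keep the construction local enough for the depth statement.

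For the norm and the depth, I would run the local construction on $Q$ with a \emph{local budget} $M_Q$ --- a quantity comparable to the $\ctp$-trace norm of $f$ on the part of $E$ near $Q$ at scale $\dq$ --- in place of the global norm $\norm{f}_{\ctp(E)}$. One has $M_Q\leq\norm{f}_{\ctp(E)}$ automatically (restrict an optimal extension), so the Whitney patching estimate will give $\norm{\Eb(f)}_{\ct(\R)}\leq C\max_Q M_Q\leq C\norm{f}_{\ctp(E)}$, which is (i)--(ii). The depth claim (iii) --- that the jet of $\Eb(f)$ at a point $t$ depends only on $f|_{S(t)}$ for some universal-bounded $S(t)\subset E$ --- reduces, via the bounded-overlap property of $\Lambda$, to showing that each $M_Q$ and each prescribed endpoint jet can be computed from $O(1)$ values of $f$; this in turn comes from the finiteness principle for $\ctp$ (see \cite{JL20}) combined with Helly's theorem, and is precisely the payoff of the $\sk$-based stopping rule in Definition~\ref{def.Lz}, which forces the relevant local data to be ``$O(1)$-simple'' at each scale.

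The local step on a single $Q$ is the heart of the matter. At each endpoint $y$ I would first select a \emph{universal} one-jet $P_y\in\ring_y$ that simultaneously (a) is the one-jet at $y$ of some nonnegative $\ct(\R)$ function of norm $\leq CM_Q$ agreeing with the nearby data --- i.e. $P_y$ lies in the (one-dimensional analogue of the) set $\G(y,S,CM_Q)$ for every small subset $S$ of the nearby data --- and (b) is Whitney-compatible with the jet $P_{y'}$ at each neighboring endpoint, $P_y-P_{y'}\in CM_Q\,\B(y,\abs{y-y'})$. Existence of such a $P_y$ depending on only $O(1)$ values of $f$ is what a Helly-type argument (Lemma~\ref{lem.helly-v}) gives when applied to the convex sets of admissible one-jets, and (b) then follows from (a) and Taylor's theorem in the form $\jet_x F-\jet_y F\in C\norm{F}_{\ct(\R)}\B(x,\abs{x-y})$. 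Given the $P_y$, I would write $F_Q$ down by hand: when $E\cap Q=\void$, as a nonnegative $\ct$ interpolant of the two endpoint jets; when $Q$ carries a point $x\in E$, by first interpolating $f(x)$ and the endpoint jets and then correcting, with bumps of size $\leq CM_Q$ supported near the endpoints, so that $\jet_y F_Q=P_y$ exactly. One then checks $\norm{F_Q}_{\ct}\leq CM_Q$, $F_Q|_{E\cap Q}=f$, and $F_Q\geq 0$.

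The step I expect to be the main obstacle is the construction of $F_Q$ when $f$ is \emph{small} at a point $x\in E\cap Q$, say $f(x)\approx M_Q\dq^2$: the one-dimensional form of Lemma~\ref{lem.TC} then forces $\abs{\grad P_x}\leq C\sqrt{M_Q f(x)}$ and $\dist{x}{\set{P_x=0}}\geq cM_Q^{-1/2}\sqrt{f(x)}$, so $F_Q$ must climb away from a nearly vanishing value at $x$ without ever going negative and without its second derivative exceeding $CM_Q$, and the endpoint jets $P_y$ must be selected compatibly with this tight constraint. Handling this coupling between the Helly selection of the $P_y$ and the nonnegativity/smallness requirements is the delicate point, and it is the one-dimensional model for the difficulties flagged in the introduction for dimension two. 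Once the $F_Q$ are in hand with matching endpoint jets the rest is routine: bounded overlap of the dilated intervals together with $P_y-P_{y'}\in CM_Q\,\B(y,\abs{y-y'})$ gives $\norm{\sum_Q\theta_Q F_Q}_{\ct(\R)}\leq C\max_Q M_Q$ by the standard Whitney patching lemma; $\Eb(f)|_E=f$ because near each $x\in E$ every $F_Q$ entering the sum already equals $f$ at $x$; nonnegativity is inherited from the $F_Q$; and (iii) follows from the locality of $\Lambda$, of the $M_Q$, and of the $P_y$.
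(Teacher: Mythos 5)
This theorem is not proved in the present paper: it is quoted verbatim from \cite{JL20} (Theorem 2.A there) and used as a black box, so there is no in-document proof to compare yours against. Judged on its own terms, your proposal is a sensible plan but not a proof, because the step you yourself flag as ``the main obstacle'' is precisely where the content of the theorem lives, and you leave it unresolved. When $f(x)$ is small relative to $M_Q\dq^2$ at some $x\in E$ near $Q$, the one-dimensional analogue of Lemma \ref{lem.TC} forces every admissible one-jet at $x$ to have slope $O(\sqrt{M_Q f(x)})$ and to remain positive only on an interval of length comparable to $M_Q^{-1/2}\sqrt{f(x)}$, which may be far smaller than $\dq$; one must then exhibit a \emph{single} nonnegative $C^2$ function of norm $\leq CM_Q$ that threads these constraints at several consecutive data points \emph{and} matches the Helly-selected endpoint jets exactly. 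Proving that the separately-selected jets are simultaneously realizable by one nonnegative function (rather than each being realizable on its own) is the whole difficulty, and ``interpolate, then correct with bumps of size $CM_Q$ near the endpoints'' is exactly the operation that threatens nonnegativity in this small-data regime, since the function has no room of order $M_Q\dq^2$ to absorb the correction. Until that local construction is written down and verified, the argument is an outline with its crux missing.

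Two secondary points. First, your machinery is heavier than what the quoted result uses: Remark \ref{rem.St} shows that $S(t)$ consists of the three or four data points nearest $t$, which indicates that the construction in \cite{JL20} proceeds gap-by-gap between consecutive points of $E$, with no Calder\'on--Zygmund stopping rule or $\sigma^\sharp$-based decomposition needed in one dimension; the dyadic decomposition and the Helly selection are where two-dimensional difficulties enter, not one-dimensional ones. Second, for $\overline{\E}(f)\big|_E=f$ you need each local piece $F_Q$ to agree with $f$ on $E$ intersected with the \emph{dilate} of $Q$ supporting $\theta_Q$, not merely on $E\cap Q$ (compare Lemma \ref{lem.ext-loc}(A)(ii), which demands agreement on $E\cap 2Q$): as written, a point of $E$ lying in $\supp{\theta_{Q'}}\setminus Q'$ for a neighboring $Q'$ would spoil interpolation. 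This is fixable, but it needs to be said.
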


		\begin{theorem}[Theorem 2.B. in \cite{JL20}]\label{thm.bd-1dpm}
			Let $ E \subset \R $ be a finite set. There exist universal constants $ C,D $ and a linear operator $ \Ebpm : C^2(E) \to C^2(\R) $ such that the following hold.
			\begin{enumerate}[(i)]
				\item $ \Ebpm (f) \big|_E = f $ for all $ f \in C^2(E) $.
				\item $ \norm{\Ebpm(f)}_{{C}^2(\R)} \leq C\norm{f}_{{C}^2(E)} $.

				\item Moreover, for each $ t \in \R $, there exists $ S(t) \subset E $ with $ \#(S(t)) \leq D $, such that for all $ f,g \in \ct(E) $ with $ f|_{S(t)} = g|_{S(t)} $, we have
				\begin{equation*}
				\ddtm (\Ebpm(f))(t) = \ddtm(\Ebpm (g))(t)
				\enskip
				\text{ for }
				m = 0,1,2\,.
				\end{equation*}

			\end{enumerate}
			
		\end{theorem}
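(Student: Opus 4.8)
\medskip\noindent\emph{Proof proposal.}
The plan is to run, in one dimension, the local-to-global scheme outlined for Theorem \ref{thm.bd} in the introduction; because no positivity is imposed, every ingredient can be kept linear in the data. Write $E=\set{t_1<\cdots<t_N}$ and $M:=\norm{f}_{\ct(E)}$. First I would build a one-dimensional Calder\'on--Zygmund family $\Lambda=\set{Q}$ of dyadic intervals partitioning $\R$, as in Section \ref{subsec.CZ} with the (simpler) one-dimensional analogues of Definition \ref{def.Lz} and of the functionals $\sk(\cdot,k_0)$. By the one-dimensional form of Lemma \ref{lem.CZ}: touching intervals have comparable length, so the dilates $\set{3Q}$ have bounded overlap; the condition $\diam_{\ring_x}\sk(x,k_0)\ge C_0\dq$ for $x\in 2Q$ forces $E\cap 2Q$ to contain at most one point, whence $\#(E\cap 3Q)\le C$ and the points of $E\cap 3Q$ are $\ge c_0\dq$ apart; and each $Q$ carries a representative point $\xqs\in Q$ with $\dist{\xqs}{E}\ge c_0\dq$.

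To each $Q$ I would attach a canonical affine \emph{reference} $R_Q$ and a \emph{local model} $\pi_Q\in\ct(\R)$, both linear in the data. Take $R_Q$ to be a weighted affine least-squares fit to $f$ on a bounded set $S_Q\subseteq E$ --- the data points in $3Q$ together with the nearest datum of $E$ on each side of $Q$ --- the weights tuned to the scale $\dq$, so that over a long $E$-free stretch $R_Q$ relaxes toward the line through the two flanking data values. Two elementary a priori facts drive the estimates: $(1)$ $\abs{f_i-f_j}\le M\abs{t_i-t_j}$ for all $i,j$, whence $\abs{R_Q}\le CM$ and $\abs{R_Q'}\le CM$ on $3Q$; and $(2)$ restricting a near-optimal global extension of $f$ and Taylor-expanding at $\xqs$ produces an affine $A$ with $\abs{f_i-A(t_i)}\le CM\dq^2$ on $E\cap 3Q$, so that the weighted fit obeys $\abs{f_i-R_Q(t_i)}\le CM\dq^2$ there too (the global extension only bounds residuals; $R_Q$ itself is local). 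Since $E\cap 3Q$ is $c_0\dq$-separated and these residuals are $O(M\dq^2)$, one may interpolate them by a $\ct$ function $g_Q$ with $\abs{\d^m g_Q}\le CM\dq^{2-m}$ on $3Q$ for $m=0,1,2$; put $\pi_Q:=R_Q+g_Q$, so that $\pi_Q(t_i)=f_i$ for every $t_i\in E\cap 3Q$, $\norm{\pi_Q}_{\ct(3Q)}\le CM$, and $\abs{\d^m(\pi_Q-R_Q)}\le CM\dq^{2-m}$. Together with the comparison $\abs{\d^m(R_Q-R_{Q'})}\le CM\dq^{2-m}$ on $3Q\cap3Q'$ for $Q\touch Q'$, this gives the Whitney compatibility $\abs{\d^m(\pi_Q-\pi_{Q'})}\le CM\dq^{2-m}$ on the overlap.

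Next, fix a partition of unity $\set{\theta_Q}$ subordinate to $\set{3Q}$, depending only on $E$, with $\sum_Q\theta_Q\equiv1$ and $\abs{\d^m\theta_Q}\le C\dq^{-m}$, and define $\Ebpm(f):=\sum_{Q\in\Lambda}\theta_Q\pi_Q$. Near any point $t$, writing $\Ebpm(f)=R_{Q_0}+\sum_Q\theta_Q(\pi_Q-R_{Q_0})$ for a fixed $Q_0\ni t$ and expanding $\pi_Q-R_{Q_0}=(\pi_Q-R_Q)+(R_Q-R_{Q_0})$, the standard Whitney patching estimate --- each derivative $\d^m\theta_Q$, of size $\dq^{-m}$, lands only on a difference of size $CM\dq^{2-m}$, while $\norm{R_{Q_0}}_{\ct(3Q_0)}\le CM$ --- yields $\Ebpm(f)\in\ct(\R)$ with $\norm{\Ebpm(f)}_{\ct(\R)}\le CM$, which is (ii); and $\Ebpm(f)(t_i)=\sum_Q\theta_Q(t_i)\pi_Q(t_i)=f_i\sum_Q\theta_Q(t_i)=f_i$, which is (i). The operator is linear since each $\pi_Q$ is linear in $f$ and each $\theta_Q$ is independent of $f$. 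For (iii): for a query point $t$, only the boundedly many $Q$ with $t\in 3Q$ affect $\Ebpm(f)$ near $t$, and each such $\pi_Q$ --- hence $\d^m\Ebpm(f)(t)$ for $m=0,1,2$ --- depends only on $f$ restricted to $S_Q$ (with $\#(S_Q)\le C$); taking $S(t)$ to be the union of these gives $\#(S(t))\le D$ for a universal $D$.

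The main obstacle is exactly the reference comparison $\abs{\d^m(R_Q-R_{Q'})}\le CM\dq^{2-m}$ when $3Q\cap 3Q'$ contains few or no points of $E$: there no shared data pins the two fits together, so the estimate must be propagated and telescoped along a chain of touching intervals, and the weighting rule in the $E$-free regions must reconcile the two end jets across a long gap through a $\ct$ transition whose size and derivative bounds are precisely what the $\sup$-part of $\norm{\cdot}_{\ct(\R)}$ --- unlike a homogeneous seminorm --- registers. This is the one-dimensional shadow of the delicacy flagged in the introduction. Granting it, the remaining steps --- the rescaling estimates (Lemma \ref{lem.comp}), the bounded-overlap bookkeeping (Lemma \ref{lem.CZ}(\ref{lem.CZ.cover})), and the Taylor--Whitney correspondence (Lemma \ref{lem.WT}) --- are routine.
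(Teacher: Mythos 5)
The paper itself contains no proof of Theorem \ref{thm.bd-1dpm}: it is imported verbatim from \cite{JL20}, so your argument has to stand on its own, and it does not. The step you label ``the main obstacle'' and then grant --- the comparison $\abs{\d^m(R_Q-R_{Q'})}\le CM\dq^{2-m}$ for touching intervals --- is not a technical loose end; once the Calder\'on--Zygmund scaffolding, the bounded overlap, and the partition-of-unity bookkeeping are in place, that estimate \emph{is} the theorem. Everything you actually establish (the residual bounds $\abs{f_i-R_Q(t_i)}\le CM\dq^2$ on $E\cap 3Q$, the correction $g_Q$, the patching computation, linearity, the depth count) is the routine part. Worse, the estimate is false for the most natural reading of your construction. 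Your own CZ condition forces $\#(E\cap 2Q)\le 1$, so the residual bounds on $E\cap 3Q$ do not pin down the slope of $R_Q$ to within $CM\dq$; the slope is governed entirely by the flanking data, and you stipulate that over an $E$-free stretch $R_Q$ ``relaxes toward the line through the two flanking data values.'' Take $E=\set{0,\,1,\,1+\epsilon}$ with $f=(0,0,\epsilon)$ and $\epsilon$ small. Then $\norm{f}_{\ct(E)}\sim 1$, every admissible extension has $F'(1)=1+O(\epsilon)$ (Taylor expansion on $[1,1+\epsilon]$), and the CZ intervals adjacent to $t=1$ have $\dq\lesssim\epsilon$ because $\diam_{\ring_x}\sk(x,k_0)\sim \epsilon+\abs{x-1}$ there. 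The secant reference coming from the gap $[0,1]$ has slope $0$, the one from $[1,1+\epsilon]$ has slope $1$, so $\abs{R_Q'-R_{Q'}'}\approx 1$ where the patching argument requires $\le CM\dq\lesssim\epsilon$; the resulting glued function has second derivative of size $\epsilon^{-1}$. To avoid this, $R_Q$ near $t=1$ must already encode the three-point divided difference involving $1+\epsilon$, and the unspecified ``weights tuned to the scale $\dq$'' must interpolate, as $\dq$ varies along a chain of touching intervals, between exact interpolation of the nearest points and the long secant, with the telescoped slope variation across each gap controlled by $M$ times the gap length and with $\abs{R_Q},\abs{R_Q'}\le CM$ throughout (the norm is inhomogeneous). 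None of this is supplied, and it is precisely where the work lies.

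For comparison, the construction behind Theorem \ref{thm.bd-1dpm} in \cite{JL20} is more elementary and sidesteps the fit-comparison problem entirely, as Remark \ref{rem.St} already telegraphs: one assigns to each $t_i$ a first-order jet built linearly from the three or four consecutive data points around it (suitably clamped divided differences), checks Whitney compatibility of consecutive jets directly as a finite divided-difference inequality against $\norm{f}_{\ct(E)}$, and glues gap by gap with Hermite-type transitions; the sets $S(t)$ are then exactly the consecutive triples and quadruples listed in the remark. If you wish to salvage the CZ route, you must (i) write down the weights explicitly and (ii) prove the $R_Q$-comparison by telescoping through chains of touching intervals --- at which point you will find yourself proving the same divided-difference inequalities the direct construction starts from.
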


	\end{subtheorem}

	\begin{remark}\label{rem.St}
		The set $ S(t) $ in Theorems \ref{thm.bd-1d}(iii) and \ref{thm.bd-1dpm}(iii) takes a particular simple form:
		\begin{itemize}
			\item Suppose $ \#(E) \leq 3 $. We take $ S(t) = E $.
			\item Suppose $ \#(E) \geq 4 $. Enumerate $ E = \set{t_1, \cdots, t_N} $ with $ t_1 < \cdots < t_N $.
			\begin{itemize}
				\item If $ t < t_1 $ or $ t> t_N $, we take $ S(t) $ to be the three points in $ E $ closest to $ x $.
				\item If $ t \in [t_1, t_2] $, we take $ S(t) = \set{t_1, t_2, t_3} $. 
				\item If $ t \in [t_{N-1}, t_N] $, we take $ S(t) = \set{t_{N-2}, t_{N-1}, t_N} $.
				\item Otherwise, we take $ S(t) = \set{t_1', t_2', t_3', t_4'} \subset E $ with $ t_1' < t_2' < t_3' < t_4' $ such that $ t \in [t_2', t_3'] $.
			\end{itemize}
		\end{itemize}
		
	\end{remark}
	
	
	As a consequence of \hyperref[thm.helly]{Helly's Theorem} and Lemma \ref{lem.helly-v}, we have the following.

	\begin{lemma}[Lemma 4.5 of \cite{JL20}]\label{lem.sk-loc}
		Let $ E \subset \rt $ be finite. Let $ x \in  \rt $. Let $ k \geq 0 $. Then there exist $ S_\nu = S_\nu(x,k) \subset E $ for $ \nu = 1, \cdots, 12 $ with $ \#(S_\nu) \leq k $ for each $ \nu $, and
		\begin{equation*}
		\bigcap_{\nu = 1}^{12} \s(x,S_\nu) \subset C \cdot \sk(x,k)\,.
		\end{equation*}
	\end{lemma}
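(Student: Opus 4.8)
The plan is to exhibit each $\s(x,S)$ as a member of a finite family of compact, convex, origin-symmetric bodies in the three-dimensional jet space $\ring_x$, and then to apply the quantitative Helly lemma, Lemma \ref{lem.helly-v}, with $D = 3$; the bound $D(D+1) = 12$ is precisely the origin of the number $12$ in the statement.

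First I would dispose of the degenerate case $k = 0$, where $\sk(x,0) = \s(x,\void)$ and one simply takes $S_\nu = \void$ for every $\nu$; so assume $k \geq 1$. I would then record the structural facts about $\s(x,S)$: it is the image of the convex, symmetric set $\set{\phi \in \ct(\rt) : \phi|_S = 0,\ \norm{\phi}_{\ct(\rt)} \leq 1}$ under the linear map $\phi \mapsto \jet_x\phi$, hence convex and symmetric about $0$; it is bounded since $\abs{\jet_x\phi}_{\ring_x} \leq \norm{\phi}_{\ct(\rt)}$; and it is compact (by a routine normal-families argument, or one may pass to closures throughout, which is harmless by convexity together with the interior-point property recorded below). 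Since $E$ is finite, $\mathcal{F} := \set{\s(x,S) : S \subseteq E,\ \#(S) \leq k}$ is a finite family and $\bigcap_{K \in \mathcal{F}} K = \sk(x,k)$ by definition.

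The key quantitative input — and the step I expect to be the main obstacle — is that $0$ is an interior point of each body. When $x \notin E$ this holds in all of $\ring_x$: given $P \in \ring_x$, put $\rho := \tfrac12\dist{x}{E} > 0$, choose a cutoff $\psi$ with $\psi \equiv 1$ near $x$, $\supp{\psi} \subset B(x,\rho)$, and $\abs{\d^\alpha\psi} \leq C\rho^{-\abs{\alpha}}$, and set $\phi := \psi \cdot P$; then $\phi|_E = 0$, $\jet_x\phi = P$, and a direct computation gives $\norm{\phi}_{\ct(\rt)} \leq C_x\abs{P}_{\ring_x}$ with $C_x$ depending only on $x$ and $E$. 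Hence the ball $\set{P : \abs{P}_{\ring_x} \leq C_x^{-1}}$ lies inside every member of $\mathcal{F}$, and Lemma \ref{lem.helly-v} with $D = 3$ produces $S_1, \dots, S_{12}$, each of cardinality $\leq k$, with $\bigcap_{\nu=1}^{12}\s(x,S_\nu) \subseteq C_3 \cdot \bigcap_{K \in \mathcal{F}} K = C_3 \cdot \sk(x,k)$, which is the desired conclusion with $C = C_3$.

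The case $x \in E$ requires one extra twist. Here any $\phi$ vanishing on a set that contains $x$ has $\phi(x) = 0$, so $\s(x,S)$ lies in the $2$-plane $V := \set{P \in \ring_x : P(x) = 0}$ whenever $x \in S$; since $\{x\}$ is admissible ($k \geq 1$), already $\sk(x,k) \subseteq \s(x,\{x\}) \subseteq V$. Moreover, from $\phi(x) = (\jet_x\phi)(x)$ one obtains, for every $S \subseteq E$, the identities $\s(x,S) \cap V = \s(x, S \cup \{x\})$ and $\s(x,S) \cap \s(x,\{x\}) = \s(x, S \cup \{x\})$. Intersecting the defining intersection with $V$ changes nothing (as $\sk(x,k) \subseteq V$) and exhibits $\sk(x,k)$ as the intersection of the finite family $\set{\s(x, S \cup \{x\}) : \#(S) \leq k}$ of compact, convex, symmetric subsets of $V \cong \R^2$, each of which contains a relative neighbourhood of $0$ in $V$ by the cutoff construction above (now with the cutoff supported away from $S$ but with $x$ in its interior). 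Lemma \ref{lem.helly-v} with $D = 2$ then selects six of them, say $\s(x, S_\nu \cup \{x\})$ with $\#(S_\nu) \leq k$ for $\nu = 1, \dots, 6$; using $\s(x, S_\nu \cup \{x\}) = \s(x, S_\nu) \cap \s(x, \{x\})$ gives $\s(x,\{x\}) \cap \bigcap_{\nu=1}^{6}\s(x,S_\nu) \subseteq C_2 \cdot \sk(x,k)$. The seven subsets $\{x\}, S_1, \dots, S_6$ all have cardinality $\leq k$; padding the list to length $12$ with copies of $\void$ (which leaves the intersection unchanged, since every $\s(x,S) \subseteq \s(x,\void)$) completes this case, and one takes $C := \max(C_2, C_3)$.
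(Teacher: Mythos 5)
Your argument is correct and is exactly the intended one: the paper does not prove this lemma itself but quotes it from \cite{JL20} and notes that it is a consequence of Helly's theorem and Lemma \ref{lem.helly-v}, and your application of that lemma with $D=3$ (whence $D(D+1)=12$), after verifying convexity, symmetry, boundedness, and the interior-point hypothesis via the cutoff construction, is precisely that route. Your separate treatment of the degenerate case $x\in E$ inside the $2$-plane $\set{P : P(x)=0}$ is a correct and necessary refinement of the same idea, since $0$ fails to be interior in $\ring_x$ there.
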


	\begin{lemma}[Lemma 5.3 of \cite{JL20}]\label{lem.G-G}
		
		Let $ E \subset \rt $ be a finite set. Let $ \Lz $ be as in Definition \ref{def.Lz}. Let $ Q, Q' \in \Lz $. Let $ x_Q \in Q $ and $ x_{Q'} \in Q' $. Let $ P_Q \in \Gk(x_Q, 4k_0,M) $ and $ P_{Q'} \in \Gk(x_{Q'}, 4k_0,M) $. Then
		\begin{equation*}
		\abs{\d^\alpha (P_Q - P_{Q'})(x)} \leq CM \brac{\abs{x_Q - x_{Q'}} + \dq + \delta_{Q'}}^{2-\abs{\alpha}}
		\for x \in 2Q\cup 2 Q'
		\text{ and }\abs{\alpha} \leq 2\,.
		\end{equation*}
		 
	\end{lemma}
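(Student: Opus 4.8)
The plan is to reduce the conclusion to a single membership $P_Q-P_{Q'}\in CM\cdot\B(z,\lambda)$ at a well-chosen point $z$, where $\lambda:=\abs{x_Q-x_{Q'}}+\dq+\delta_{Q'}$, and then to extract that membership from the Calder\'on--Zygmund structure. First I would record that $\dq,\delta_{Q'}\le\lambda$, that $2Q\cup2Q'\subset B(z,C\lambda)$ whenever $z\in Q$, and that --- since $P_Q-P_{Q'}$ is affine --- the inclusion $P_Q-P_{Q'}\in CM\B(z,\lambda)$ already yields $\abs{\d^\alpha(P_Q-P_{Q'})(x)}\le CM\lambda^{2-\abs\alpha}$ for every $x\in B(z,C\lambda)$ and $\abs\alpha\le2$ (trivially for $\abs\alpha=2$; for $\abs\alpha\le1$ by expanding $\d^\alpha(P_Q-P_{Q'})$ to first order about $z$). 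So the whole problem becomes the $\B$-membership. We may assume $\dq\ge\delta_{Q'}$, and I would split on whether $\abs{x_Q-x_{Q'}}$ is comparable to $\dq$ or much larger.

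\textbf{Main case: $\abs{x_Q-x_{Q'}}\le C_1\dq$}, so $\lambda$ and $\dq$ are comparable. Take $z:=\xqs$ from Lemma \ref{lem.CZ}(\ref{lem.CZ.rep}) (assume $Q\in\Ls$; the case $E\cap2Q=\void$ is a simpler variant), so $\dist{z}{E}\ge c_0\dq$. By Lemma \ref{lem.sk-loc} at $z$ with $k=4k_0$, fix $S_1,\dots,S_{12}\subset E$ with $\#(S_\nu)\le4k_0$ and $\bigcap_\nu\s(z,S_\nu)\subset C\sk(z,4k_0)$, and recall from Lemma \ref{lem.sk-ball} that $\sk(z,4k_0)\subset C\B(z,\dq)\subset C\B(z,\lambda)$. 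It therefore suffices to realize $P_Q-P_{Q'}$, for each $\nu$, as $\jet_z\phi$ with $\phi\in\ct(\rt)$, $\norm{\phi}_{\ct(\rt)}\le CM$, and $\phi|_{S_\nu}=0$. Since $P_Q\in\Gk(x_Q,4k_0,M)\subset\G(x_Q,S_\nu,M)$ and likewise $P_{Q'}\in\G(x_{Q'},S_\nu,M)$, pick $F_\nu,G_\nu\in\ctp(\rt)$ of norm $\le M$ with $\jet_{x_Q}F_\nu=P_Q$, $\jet_{x_{Q'}}G_\nu=P_{Q'}$, and $F_\nu|_{S_\nu}=G_\nu|_{S_\nu}=f$, and set $\phi_\nu:=F_\nu-G_\nu$. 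Then $\phi_\nu|_{S_\nu}=0$ and $\norm{\phi_\nu}_{\ct(\rt)}\le2M$, but Taylor's theorem gives only $\jet_z\phi_\nu=(P_Q-P_{Q'})+R_\nu$ with $R_\nu\in CM\B(z,\abs{z-x_Q})+CM\B(z,\abs{z-x_{Q'}})\subset CM\B(z,C\lambda)$. I would repair the jet by subtracting $\chi R_\nu$, where $\chi\equiv1$ near $z$, $\supp{\chi}\subset B(z,\tfrac{c_0}{2}\dq)$ (disjoint from $E\supset S_\nu$), and $\abs{\d^\beta\chi}\le C\dq^{-\abs\beta}$: because $\lambda\approx\dq$ here, $\norm{\chi R_\nu}_{\ct(\rt)}\le CM$, and --- crucially --- $\phi_\nu$ is a \emph{difference} of extensions carrying no sign constraint, so subtracting $\chi R_\nu$ destroys nothing. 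Then $\tilde\phi_\nu:=\phi_\nu-\chi R_\nu$ has $\jet_z\tilde\phi_\nu=P_Q-P_{Q'}$, vanishes on $S_\nu$, and has norm $\le CM$, which is exactly what was needed.

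\textbf{Complementary case: $\abs{x_Q-x_{Q'}}>C_1\dq$}, so $\lambda\approx\abs{x_Q-x_{Q'}}$. Now transporting $P_{Q'}$ towards $Q$ costs a Taylor error of order $M\lambda^{2-\abs\alpha}$ --- as large as the target --- and the cutoff above would sit at scale $\lambda\gg\dq$, where it meets $E$. Instead I would work directly at $z=x$ for $x\in2Q$ (and symmetrically for $x\in2Q'$) and use the absorption identity: if $\sigma=\jet_x\psi$ with $\psi|_S=0$, $\norm{\psi}_{\ct(\rt)}\le2M$, and $\beta\in CM\B(x,\lambda)$, then $\sigma+\beta=\jet_x(\psi+\chi_\lambda\beta)$ for a scale-$\lambda$ cutoff $\chi_\lambda$ about $x$, where $\psi+\chi_\lambda\beta$ vanishes on $S\setminus B(x,c\lambda)$ and has norm $\le CM$; hence $2M\s(x,S)+CM\B(x,\lambda)\subset CM\,\s(x,S\setminus B(x,c\lambda))$. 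Running the construct-and-difference step with probes $S\subset E$, $\#(S)\le4k_0$, gives $P_Q-P_{Q'}\in2M\s(x,S)+CM\B(x,\lambda)\subset CM\,\s(x,S\setminus B(x,c\lambda))$ for every such $S$, so $P_Q-P_{Q'}$ lies in $CM$ times the intersection of $\s(x,S')$ over all $S'\subset E\setminus B(x,c\lambda)$ with $\#(S')\le4k_0$ --- that is, in $CM$ times the analogue of $\sk(x,4k_0)$ built from the points of $E$ at distance $\ge c\lambda$ from $x$. The remaining --- and main --- obstacle is to show that this restricted cone is contained in $C\B(x,C\lambda)$. This is a version of Lemma \ref{lem.sk-ball} at the \emph{coarse} scale $\lambda$: one checks (using $Q'\in\Ls$ and $C_1$ large, so that the degenerate possibility $E\subset B(x,c\lambda)$ does not occur) that, with respect to the set $E\setminus B(x,c\lambda)$, the point $x$ lies in a Calder\'on--Zygmund-type square of sidelength $\Theta(\lambda)$, and then reruns the Helly argument behind Lemmas \ref{lem.sk-loc} and \ref{lem.sk-ball} for that set, the comparability of neighboring squares (Lemma \ref{lem.CZ}(\ref{lem.CZ.cover})) being what forces $\lambda$ to be the relevant scale. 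Throughout, the nonnegativity bookkeeping stays straight because every auxiliary object is either a genuine nonnegative extension or an honestly sign-free difference, and only the latter are ever cut off.
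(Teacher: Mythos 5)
The paper itself does not prove Lemma \ref{lem.G-G}; it imports it as Lemma 5.3 of \cite{JL20}, so I am judging your argument on its own terms. Your \emph{main case} ($\abs{x_Q-x_{Q'}}\le C_1\dq$) is correct and is the standard argument: form the exact difference $F_\nu-G_\nu$ of witnessing extensions for each probe set $S_\nu$, repair the Taylor error by a cutoff supported in $B(\xqs,c_0\dq/2)$ (legitimate because $\dist{\xqs}{E}\ge c_0\dq$ and because the difference carries no sign constraint), and then invoke Lemmas \ref{lem.sk-loc} and \ref{lem.sk-ball}. The $\abs{\alpha}=2$ case and the transport of a $\B(z,\lambda)$-membership to all of $2Q\cup2Q'$ are indeed trivial since $P_Q-P_{Q'}$ is affine.

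The complementary case, however, contains a genuine gap. Your absorption step validly yields $P_Q-P_{Q'}\in CM\cdot\s(x,S')$ for every $S'\subset E\setminus B(x,c\lambda)$ with $\#(S')\le 4k_0$, but the assertion that the resulting restricted cone is contained in $C\B(x,C\lambda)$ is precisely the entire difficulty, and you do not prove it: you defer it to constructing a Calder\'on--Zygmund-type square of sidelength $\Theta(\lambda)$ for the modified set $E\setminus B(x,c\lambda)$ and \qt{rerunning} the Helly arguments behind Lemmas \ref{lem.sk-loc} and \ref{lem.sk-ball} for that set. Neither the existence of such a square nor the analogue of Lemma \ref{lem.sk-ball} at the coarse scale is established (the second bullet of Definition \ref{def.Lz} gives only a \emph{lower} bound on $\diam\,\sk$ at scale $\dq$ and says nothing about the geometry of $E$ at scale $\lambda\gg\dq$), and discarding all points of $E$ in $B(x,c\lambda)$ throws away exactly the information that links the two local pictures. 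The missing idea is much simpler: $\Gk(x_Q,4k_0,M)\ne\void$ forces $\norm{f|_S}_{\ctp(S)}\le M$ for all $\#(S)\le 4k_0$, so the finiteness principle (Lemma \ref{lem.FP-G}) furnishes a single $F\in\ctp(\rt)$ with $F|_E=f$, $\norm{F}_{\ct(\rt)}\le CM$, hence $\jet_yF\in\Gk(y,4k_0,CM)$ for every $y$. Since two elements of $\Gk(x_Q,4k_0,CM)$ differ \emph{exactly} by a jet in $CM\,\sk(x_Q,4k_0)$ (no Taylor error: the witnessing functions are based at the same point), your main-case machinery gives $P_Q-\jet_{x_Q}F\in CM\B(\xqs,C\dq)$ and $P_{Q'}-\jet_{x_{Q'}}F\in CM\B(x_{Q'}^\sharp,C\delta_{Q'})$, while Taylor's theorem applied to $F$ gives $\jet_{x_Q}F-\jet_{x_{Q'}}F\in CM\B(x_Q,\abs{x_Q-x_{Q'}})$. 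Summing these three memberships handles both cases at once and eliminates the need for any coarse-scale decomposition.
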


	\begin{lemma}[Lemma 7.3 of \cite{JL20}]\label{lem.perturb}
		Let $ E \subset \rt $ be finite. Let $ Q \in \Ls $ and let $ \xqs \in \Ls $. Let $ f : E \to \pos $ be given. Suppose $ \G(\xqs,4k_0,M) \neq \void $. The following are true. 
		\begin{enumerate}[(A)]
			\item There exists a number $ B_0 > 0 $ exceeding a large universal constant such that the following holds. Suppose $ f(x) \geq B_0M\dq^2 $ for each $ x \in E \cap 2Q $. Then 
			\begin{equation*}
			\eqindent
			\G(\xqs,4k_0,M) + M\cdot \B(\xqs,\dq)\subset \G(\xqs,4k_0,CM),
			\end{equation*}
			for some universal constant $ C $.
			\item Let $ A > 0 $. Suppose $ f(x) \leq AM\dq^2 $ for each $ x \in E \cap 2Q $. Then
			\begin{equation*}
			0 \in \G(\xqs,4k_0,A'M).
			\end{equation*}
			Here, $ A' $ depends only on $ A $. 
		\end{enumerate}
	\end{lemma}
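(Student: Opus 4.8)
The two parts share a common mechanism, based on Lemma~\ref{lem.CZ}(\ref{lem.CZ.rep}): since $\dist{\xqs}{E}\geq c_0\dq$, a cutoff $\theta$ with $\theta\equiv 1$ near $\xqs$, $\supp{\theta}\subset B(\xqs,c_0\dq/2)$ and $\abs{\d^\alpha\theta}\leq C\dq^{-\abs{\alpha}}$ vanishes identically on $E$; hence replacing a function $F$ by $(1-\theta)F$, or by $F+\theta R$ with $R$ affine, changes neither $F|_E$ nor — in the first case — the sign of a nonnegative function, while it prescribes the one‑jet at $\xqs$ to $0$, respectively to $\jet_{\xqs}F+R$. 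The only quantitative input will be a control of $P(\xqs)$ for $P\in\Gk(\xqs,4k_0,M)$, extracted from the size of $f$ on $E\cap 2Q$ via Taylor's theorem, Lemma~\ref{lem.TC}, and Lemma~\ref{lem.TC-small}. Fix once and for all $y_0\in E\cap 2Q$ (nonempty since $Q\in\Ls$); note $\abs{\xqs-y_0}\leq C\dq$ and $B(\xqs,c_0\dq/2)\subset 100Q$.

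\emph{Part (B).} Since $\#\set{y_0}=1\leq 4k_0$, any $P_0\in\Gk(\xqs,4k_0,M)$ lies in $\G(\xqs,\set{y_0},M)$; taking a witness $F_0\in\ctp(\rt)$ (so $F_0(y_0)=f(y_0)\leq AM\dq^2$, $\norm{F_0}_{\ct(\rt)}\leq M$, $\jet_{\xqs}F_0=P_0$), Lemma~\ref{lem.TC-small}(A) applied to $F_0$ (with $x_0=y_0$ and $M$ replaced by $AM$) gives $P_0(\xqs)=F_0(\xqs)\leq CAM\dq^2$. Now fix $S\subset E$ with $\#(S)\leq 4k_0$ and a witness $F_S$ for $P_0\in\G(\xqs,S,M)$ (so $F_S\geq 0$, $F_S|_S=f$, $\jet_{\xqs}F_S=P_0$, $\norm{F_S}_{\ct(\rt)}\leq M$); since $F_S(\xqs)=P_0(\xqs)\leq CAM\dq^2$ and $\xqs\in 2Q$, Lemma~\ref{lem.TC-small}(A) applied to $F_S$ gives $\abs{\d^\alpha F_S}\leq CAM\dq^{2-\abs{\alpha}}$ on $100Q$. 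Put $G_S:=(1-\theta)F_S$. Then $G_S\geq 0$, $G_S|_S=F_S|_S=f$, $\jet_{\xqs}G_S=0$, and a Leibniz estimate (using $\abs{\d^\alpha\theta}\leq C\dq^{-\abs{\alpha}}$, the bounds on $F_S$ over $\supp{\theta}\subset 100Q$, and $\dq\leq 1$) yields $\norm{G_S}_{\ct(\rt)}\leq M+CAM=:A'M$, with $A'$ depending only on $A$. As $S$ was arbitrary, $0\in\Gk(\xqs,4k_0,A'M)$.

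\emph{Part (A).} Fix $P_0\in\Gk(\xqs,4k_0,M)$ and $R\in M\cdot\B(\xqs,\dq)$, and let $K$ be a large universal constant to be pinned down below. We first claim $P_0(\xqs)\geq KM\dq^2$ once $B_0$ is large enough in terms of $K$. Taking a witness $F_0$ for $P_0\in\G(\xqs,\set{y_0},M)$, Taylor's theorem gives $P_0(y_0)=\jet_{\xqs}F_0(y_0)\geq F_0(y_0)-CM\abs{y_0-\xqs}^2\geq(B_0-C)M\dq^2$; on the other hand $P_0\in\G(\xqs,\void,M)$, so Lemma~\ref{lem.TC} gives $\abs{\grad P_0}\leq C\sqrt{MP_0(\xqs)}$, whence $P_0(y_0)\leq P_0(\xqs)+\abs{\grad P_0}\abs{y_0-\xqs}\leq P_0(\xqs)+C\dq\sqrt{MP_0(\xqs)}$. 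Writing $q:=P_0(\xqs)/(M\dq^2)\geq 0$, the two bounds give $q+C\sqrt{q}\geq B_0-C$, which forces $q\geq K$ provided $B_0\geq K+C\sqrt{K}+C$. Now fix $S\subset E$ with $\#(S)\leq 4k_0$ and a witness $F_S$ for $P_0\in\G(\xqs,S,M)$. For $x\in B(\xqs,c_0\dq/2)$, Taylor's theorem together with $\abs{\grad F_S(\xqs)}=\abs{\grad P_0}\leq C\sqrt{MP_0(\xqs)}$ and $\abs{\d^\alpha F_S}\leq M$ gives $F_S(x)\geq P_0(\xqs)-C\dq\sqrt{MP_0(\xqs)}-CM\dq^2\geq\tfrac12 P_0(\xqs)$, the last step holding once $K$ is large enough (this fixes $K$, hence $B_0$); and $R\in M\B(\xqs,\dq)$ gives $\abs{R(x)}\leq M\dq^2+M\dq\cdot(c_0\dq/2)\leq CM\dq^2\leq\tfrac12 P_0(\xqs)$ on the same disk, for $K$ large. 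Hence $G_S:=F_S+\theta R\geq F_S-\abs{\theta R}\geq 0$ on $B(\xqs,c_0\dq/2)$, and $G_S=F_S\geq 0$ off $\supp{\theta}$; so $G_S\geq 0$. Moreover $G_S|_S=f$, $\jet_{\xqs}G_S=P_0+R$, and a Leibniz estimate as before gives $\norm{\theta R}_{\ct(\rt)}\leq CM$, so $\norm{G_S}_{\ct(\rt)}\leq CM$. Letting $S$ vary, $P_0+R\in\Gk(\xqs,4k_0,CM)$, which is (A).

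I expect the crux to be the estimate chain in (A): the pointwise lower bound $f\geq B_0 M\dq^2$ on $E\cap 2Q$ must first be propagated to $P_0(\xqs)\gtrsim B_0 M\dq^2$ (via Taylor together with the gradient bound of Lemma~\ref{lem.TC}), and then to the pointwise lower bound $F_S\gtrsim P_0(\xqs)$ on the disk $B(\xqs,c_0\dq/2)$ — exactly what is needed so that the sign‑indefinite perturbation $\theta R$ can be absorbed without breaking nonnegativity — and this requires fixing the constants in the order ``$K$ large (for the two $\tfrac12$‑inequalities), then $B_0$ large in terms of $K$''. The remaining steps are routine cutoff‑and‑Leibniz bookkeeping.
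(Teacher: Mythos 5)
The paper does not prove this lemma; it imports it verbatim as Lemma 7.3 of \cite{JL20}, so there is no in-text argument to compare against. Your proof is correct and self-contained given the tools the paper does state: the separation $\dist{\xqs}{E}\geq c_0\dq$ from Lemma \ref{lem.CZ}(C) justifies the cutoff that modifies the jet at $\xqs$ without touching $E$, Lemma \ref{lem.TC-small} handles the small-data case (B), and the chain $f\geq B_0M\dq^2 \Rightarrow P_0(\xqs)\gtrsim B_0M\dq^2 \Rightarrow F_S\gtrsim P_0(\xqs)$ on $B(\xqs,c_0\dq/2)$ via Lemma \ref{lem.TC} correctly absorbs the perturbation $\theta R$ in (A), with the constants fixed in a consistent order ($K$ universal first, then $B_0$ in terms of $K$).
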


	\newcommand{\ub}{\underline{B}}
	\newcommand{\ob}{\overline{B}}
	
	\begin{lemma}[Lemma 7.5 of \cite{JL20}]\label{lem.bs}
		For each $ \ub > 0 $, we can find $ \ob > 0 $ depending only on $ \ub $, such that the following holds.
		
		Let $ E \subset \rt $ be finite. Let $ Q \in \Ls $. Let $ f :E \to \pos $ be given. Suppose $ \G(\xqs,4k_0,M) \neq \void $. Then at least one of the following holds.
		\begin{enumerate}[(A)]
			\item $ f(x) \geq \ub M\delta_Q^2 $ for all $ x \in E \cap 2Q $.
			\item $ f(x) \leq \ob M\delta_Q^2 $ for all $ x \in E \cap 2Q $. 
		\end{enumerate}
		
	\end{lemma}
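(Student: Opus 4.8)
The statement is a dichotomy, so the plan is to assume alternative (A) fails and deduce (B). If (A) fails there is a point $y\in E\cap 2Q$ with $f(y) < \ub M\dq^2$, and the goal becomes: show $f(x)\le\ob M\dq^2$ for \emph{every} $x\in E\cap 2Q$, with $\ob$ depending only on $\ub$ and universal constants. The single tool I would invoke is Lemma \ref{lem.TC-small}(A), which says that a nonnegative $\ct(\rt)$ function with $\ct$-norm $\le M$ whose value at one point of $2Q$ is $\le M\dq^2$ must satisfy $\abs{\d^\alpha F}\le CM\dq^2$ (for $\abs{\alpha}\le 2$) throughout $100Q$; here the nonnegativity of $F$ is what does the work, since it forces the gradient at the small-value point to be small.

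First I would fix an arbitrary $x\in E\cap 2Q$ and form the set $S:=\set{x,y}\subset E$; since $k_0\ge 4$ we have $\#(S)\le 2\le 4k_0$. Now $\G(\xqs,4k_0,M)$ is the intersection of the sets $\G(\xqs,S',M)$ over all $S'\subset E$ with $\#(S')\le 4k_0$, so its nonemptiness forces $\G(\xqs,S,M)\ne\void$; hence there exists $F\in\ctp(\rt)$ with $F|_S=f$ and $\norm{F}_{\ct(\rt)}\le M$ (the jet constraint plays no role). I would then set $\tilde M:=\max\set{1,\ub}\,M$, so that $\norm{F}_{\ct(\rt)}\le\tilde M$ and $F(y)=f(y)<\ub M\dq^2\le\tilde M\dq^2$. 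Since $y\in 2Q$, Lemma \ref{lem.TC-small}(A), applied with the square $Q$, base point $y$, and size $\tilde M$, gives $\abs{\d^\alpha F(z)}\le C\tilde M\dq^2$ for $\abs{\alpha}\le 2$ and all $z\in 100Q$. Taking $z=x\in 2Q\subset 100Q$ and $\alpha=0$ yields $f(x)=F(x)\le C\tilde M\dq^2=C\max\set{1,\ub}\,M\dq^2$. As $x$ was arbitrary in $E\cap 2Q$, the choice $\ob:=C\max\set{1,\ub}$, with $C$ the constant of Lemma \ref{lem.TC-small}, gives alternative (B), and $\ob$ depends only on $\ub$.

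I do not expect a genuine obstacle; the proof is short. The step requiring the most care is the bookkeeping around the rescaling: one must observe that replacing $M$ by $\tilde M=\max\set{1,\ub}M$ is exactly what lets the hypothesis $F(y)\le\tilde M\dq^2$ of Lemma \ref{lem.TC-small} be met even when $\ub>1$, and one must check that the final $\ob$ depends on nothing but $\ub$ — which holds because the only inflation comes from that rescaling and from the universal constant of Lemma \ref{lem.TC-small}. I would also note, as a sanity check, that the argument uses the hypothesis $\G(\xqs,4k_0,M)\ne\void$ only through its weak consequence that each two-point set $\set{x,y}$ with $x\in E\cap 2Q$ admits a nonnegative $\ct(\rt)$ extension of $f$ of norm $\le M$; neither the Calder\'on--Zygmund geometry of $Q\in\Ls$ nor the particular representative $\xqs$ is needed, and if $E\cap 2Q=\void$ both alternatives hold vacuously.
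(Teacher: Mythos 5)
Your argument is correct. Note that the paper itself does not prove Lemma \ref{lem.bs}; it is imported verbatim as Lemma 7.5 of \cite{JL20}, so there is no in-paper proof to compare against. Your derivation is a clean, self-contained route using only machinery available here: interpreting the hypothesis $\G(\xqs,4k_0,M)\neq\void$ as $\Gk(\xqs,4k_0,M)\neq\void$ (which is clearly the intended reading, given how the same notation is used in Lemma \ref{lem.perturb} and in the proof of Lemma \ref{lem.Txq}), extracting for each pair $\set{x,y}\subset E\cap 2Q$ a nonnegative extension of norm $\leq M$, and then invoking Lemma \ref{lem.TC-small}(A) at the small-value point $y$ with the rescaled bound $\tilde M=\max\set{1,\ub}M$ to propagate smallness to $x$. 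The bookkeeping is right: $\norm{F}_{\ct(\rt)}\leq M\leq\tilde M$ and $F(y)<\ub M\dq^2\leq\tilde M\dq^2$ put you squarely in the hypotheses of that lemma, and only the $\alpha=0$ conclusion is used, so the resulting $\ob=C\max\set{1,\ub}$ depends on nothing but $\ub$ and universal constants. You are also right that neither the jet at $\xqs$ nor the Calder\'on--Zygmund structure of $Q$ is needed; only the two-point finiteness consequence of the hypothesis enters.
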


%

	The finiteness principle in \cite{JL20} can be translated into the following.

	\begin{lemma}\label{lem.FP-G}
		Let $ E \subset \rt $ be finite. Let $ f : E \to \pos $. Suppose $ \Gk(x,4k_0,M) \neq \void $ for all $ x \in \rt $ (recall that we assume $ k_0 \geq 4 $). Then there exists $ F \in \ctp(\rt) $ with $ F|_E = f $ and $ \norm{F}_{\ct(\rt)} \leq CM $. 
	\end{lemma}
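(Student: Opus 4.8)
The plan is to reduce Lemma~\ref{lem.FP-G} to the finiteness principle for nonnegative $\ct(\rt)$ interpolation proved in \cite{JL20}, which we invoke as a black box. That result supplies a universal \emph{finiteness constant} $k^\#$ such that, for any finite $E \subset \rt$ and any $g : E \to \pos$, the hypothesis ``$\norm{g|_S}_{\ctp(S)} \leq M$ for every $S \subset E$ with $\#(S) \leq k^\#$'' implies $\norm{g}_{\ctp(E)} \leq CM$, i.e.\ there exists $F \in \ctp(\rt)$ with $F|_E = g$ and $\norm{F}_{\ct(\rt)} \leq CM$. The standing requirement $k_0 \geq 4$ is imposed precisely so that $4k_0 \geq k^\#$. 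Thus it suffices to verify the combinatorial hypothesis of the finiteness principle with $g = f$.

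First I would fix an arbitrary point $x_0 \in \rt$ and use the assumption to pick a one-jet $P \in \Gk(x_0, 4k_0, M)$. By the definition of $\Gk$, for every $S \subset E$ with $\#(S) \leq 4k_0$ we have $P \in \G(x_0, S, M)$; in particular $\G(x_0,S,M) \neq \void$, so there exists $F_S \in \ctp(\rt)$ with $F_S|_S = f$ and $\norm{F_S}_{\ct(\rt)} \leq M$. Hence $\norm{f|_S}_{\ctp(S)} \leq M$ for every such $S$, and in particular for every $S$ with $\#(S) \leq k^\#$.

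Then I would apply the finiteness principle of \cite{JL20} with $g = f$: the required local estimates hold for all subsets of cardinality at most $k^\#$, so there is $F \in \ctp(\rt)$ with $F|_E = f$ and $\norm{F}_{\ct(\rt)} \leq CM$, which is exactly the conclusion of the lemma. (If the finiteness principle in \cite{JL20} is stated in terms of Whitney fields rather than functions, one first passes from each $F_S$ to the Whitney field $(\jet_x F_S)_{x \in S} \in \wtp(S)$ via Lemma~\ref{lem.WT}(A), applies the finiteness principle at the level of $\wtp$, and then converts the resulting field on $E$ back to a function using the operator $T_w^E$ of Lemma~\ref{lem.WT}(B); the norm losses in both steps are universal.)

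There is no genuine analytic difficulty here; the entire content is the finiteness principle of \cite{JL20}, and the only thing to watch is the translation of hypotheses. It is worth noting that a single base point $x_0$ already produces all the local extensions needed, and that the consistency of the jet $P$ across the various small sets $S$ — the extra strength encoded in ``$\Gk(x_0,4k_0,M) \neq \void$'' as opposed to ``$\G(x_0,S,M) \neq \void$ for each $S$'' — plays no role in this proof; it is the hypothesis ``$\Gk(x,4k_0,M) \neq \void$ for all $x \in \rt$'' in the stated form that is convenient in the applications of this lemma later in Section~\ref{S.bd}.
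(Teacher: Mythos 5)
Your reduction is exactly what the paper intends: Lemma~\ref{lem.FP-G} is given no proof beyond the remark that it is a ``translation'' of the finiteness principle of \cite{JL20}, and your argument --- a single jet in $\Gk(x_0,4k_0,M)$ already witnesses $\norm{f|_S}_{\ctp(S)} \leq M$ for every $S \subset E$ with $\#(S) \leq 4k_0$, after which the finiteness principle yields the global interpolant --- is the correct way to carry out that translation. The only point to keep straight is that this needs $4k_0$ to dominate the finiteness constant $k^\#$ of \cite{JL20}, which is precisely how the standing choice of $k_0$ (and the parenthetical reminder in the statement) is meant to be read, so there is no gap.
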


	\renewcommand{\Q}{\mathcal{Q}}
	\newcommand{\M}{\mathcal{M}}

	Let $ E \subset \rt $ be finite. Let $ x_0 \in \rt $. 
	
	Let $ S_\nu = S_\nu(x_0,4k_0) $, $ \nu = 1, \cdots, 12 $, be as in Lemma \ref{lem.sk-loc} (with $ x = x_0 $ and $ k = 4k_0 $). We define
	\begin{equation}
	S^{x_0} := S_1 \cup \cdots \cap S_{12} \cup \set{x_0}.
	\label{eq.Sx0}
	\end{equation}
	We immediately see, from an elementary calculation and Lemma \ref{lem.sk-loc}, that
	\begin{equation*}
	\#(S^{x_0}) \leq 48k_0 + 1
	\enskip
	\text{ and }
	\enskip
	\s(x_0,S^{x_0}) \subset C\cdot \sk(x_0,4k_0).
	\end{equation*}
	We define the following two functions:
	\begin{equation}
	\begin{split}
	\Q^{x_0}:  \underbrace{\P \times \cdots\times \P}_{\#(S^{x_0})\text{ copies}} &\to \pos\\
	\vec{P} = (P^x)_{x \in S^{x_0}} &\mapsto 
	\sum_{\substack{x \in S^{x_0}\\\abs{\alpha}\leq 1}}
	\abs{\d^\alpha P^x(x)}^{2} 
	+
	\sum_{\substack{x,y \in S^{x_0}\\x\neq y\\\abs{\alpha}\leq 1}}
	\brac{ \frac{\abs{\d^\alpha(P^x - P^y)(x)}}{\abs{x-y}^{2-\abs{\alpha}}}  }^2\,,\text{ and }
	\end{split}
	\label{eq.Qx}
	\end{equation}
	\begin{equation}
	\begin{split}
	\M^{x_0}: \underbrace{\P \times \cdots\times \P}_{\#(S^{x_0})\text{ copies}} &\to [0,\infty]\\
	\vec{P} = (P^x)_{x \in S^{x_0}} &\mapsto
	\begin{cases}
	\infty &\text{ if there exists } x \in S^{x_0}  \text{ such that } P^{x}(x) < 0\\
	\sum\limits_{x \in S^{x_0}} \frac{\abs{\grad P^x}^4}{P^x(x)^2}
	&\text{ otherwise}
	\end{cases}\,.
	\end{split}
	\label{eq.Mx}
	\end{equation}
	In \eqref{eq.Mx}, we use the convention $ \frac{0}{0}= 0 $.
	
	
	It is clear that for each $ \vec{P} \in \wtp(S^{x_0}) $, we have
	\begin{equation*}
	C^{-1}(\Q^{x_0} + \M^{x_0})(\vec{P}) \leq \norm{\vec{P}}_{\wtp(S^{x_0})}^2 \leq C(\Q^{x_0} + \M^{x_0})(\vec{P}).
	\end{equation*} 
	
	\newcommand{\Gg}{\mathcal{G}}
	Let $ f : E \to \pos $ be given. For each $ M \geq 0 $, the functions $ \Q^{x_0} $ and $ \M^{x_0} $ give rise to a (possibly empty) set
	\begin{equation}
	\Gg(x_0,M) := \set{P \in \P : \begin{matrix}&\text{There exists }\vec{P} \in \wtp(S^{x_0})\text{ such that }\\
		&(\vec{P},x_0) = P,\\
		&\Q^{x_0}(\vec{P}) + \M^{x_0}(\vec{P}) \leq C_TM^2
		,
		\text{ and }\\
		&(\vec{P},x)(x) = f(x) 
		\text{ for each } x \in S^{x_0} \cap E\,.
		\end{matrix}
	}.
	\label{eq.Gg}
	\end{equation}
	Here, $ C_T $ is a large universal constant.
	
	
	We learn from Lemma \ref{lem.WT} that if $ \norm{f}_{\ctp(S^{x_0}\cap E)} \leq M $, then $ \Gg(x_0,M) \neq \void $. 
	
	Suppose $ \Gg(x_0,M) \neq \void $. Let $ {P}_0^{x_0} \in closure(\Gg(x_0,M)) $ be such that
	\begin{equation}
	\sum_{\abs{\alpha} \leq 1}\abs{\d^\alpha P_0^{x_0}(x_0)}^2 = \inf \set{ \sum_{\abs{\alpha} \leq 1}\abs{\d^\alpha P(x_0)}^2 : P \in \Gg(x_0,M)  }\,.
	\label{eq.P0}
	\end{equation}

	\begin{definition}\label{def.Tx0}
		Let $ E \subset \rt $ be finite, and let $ f : E \to \pos $. Let $ x_0 \in \rt $. Let $ \Gg(x_0,M) $ and $ P_0^{x_0} $ be as in \eqref{eq.Gg} and \eqref{eq.P0}. We define
		\begin{equation}
		\begin{split}
		T^{x_0} : \ctp(E) \times \pos &\to \P\\
		(f,M) &\mapsto \begin{cases}
		P_0^{x_0}&\text{ if } \Gg(x_0,M) \neq \void\\
		0 &\text{ otherwise}
		\end{cases}\,.
		\end{split}
		\label{eq.tx0}
		\end{equation}
	\end{definition}

	\begin{remark}
		For any given $ x_0 \in \rt $, $ T^{x_0}(M,f) $ depends only on $ f|_{S^{x_0} \cap E} $.

	\end{remark}

	The following lemma shows that for suitable $ M $, $ T^{x_0}(f,M) $ is the jet of a nonnegative interpolant of norm no greater than $ CM $ and agrees with $ f $ on $ S^{x_0} \cap E $.

	\begin{lemma}\label{lem.Fx0}
		Let $ E \subset \rt $ be finite, and let $ f : E \to \pos $. Let $ x_0 \in \rt $. Let $ S^{x_0} $ be as in \eqref{eq.Sx0}. Let $ T^{x_0} $ be as in Definition \ref{def.Tx0}. Suppose $ \norm{f}_{\ctp(E)} \leq M $. Then there exists $ F \in \ctp(\rt) $ with $ \norm{F}_{\ct(\rt)} \leq CM $, $ F(x) = f(x) $ for each $ x \in S^{x_0} \cap E $, and $ \jet_{x_0}F = T^{x_0}(f,M) $. 
	\end{lemma}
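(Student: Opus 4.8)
The plan is to construct $F$ directly from the Taylor--Whitney correspondence (Lemma~\ref{lem.WT}). The first task is to verify that $\Gg(x_0,M)\neq\void$, so that $T^{x_0}(f,M)$ is the nontrivial branch $P_0^{x_0}$ of Definition~\ref{def.Tx0}. Since $\norm{f}_{\ctp(E)}\le M$, there is a nonnegative $G\in\ctp(\rt)$ with $G|_E=f$ and $\norm{G}_{\ct(\rt)}\le 2M$. Its Whitney field $\vec P_G:=(\jet_x G)_{x\in S^{x_0}}$ lies in $\wtp(S^{x_0})$ with $\norm{\vec P_G}_{\wtp(S^{x_0})}\le 2C_wM$ by Lemma~\ref{lem.WT}(A), so by the equivalence between $\Q^{x_0}+\M^{x_0}$ and $\norm{\cdot}_{\wtp(S^{x_0})}^2$ recorded just after \eqref{eq.Mx} we get $\Q^{x_0}(\vec P_G)+\M^{x_0}(\vec P_G)\le CM^2\le C_TM^2$, using that $C_T$ is a large universal constant. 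Since moreover $(\vec P_G,x)(x)=G(x)=f(x)$ for every $x\in S^{x_0}\cap E$, we conclude $\jet_{x_0}G\in\Gg(x_0,M)$, hence $\Gg(x_0,M)\neq\void$ and $T^{x_0}(f,M)=P_0^{x_0}$.

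The second step --- the only one that is not pure bookkeeping --- is to realize $P_0^{x_0}$ as the one-jet at $x_0$ of a genuine nonnegative $\ct$ function of controlled norm. I would do this by showing that the set
\begin{equation*}
\mathcal K:=\set{\vec P\in\P^{\#(S^{x_0})}:\ \Q^{x_0}(\vec P)+\M^{x_0}(\vec P)\le C_TM^2\ \text{ and }\ (\vec P,x)(x)=f(x)\ \text{ for all } x\in S^{x_0}\cap E}
\end{equation*}
is compact and contained in $\wtp(S^{x_0})$. Boundedness is clear because $\Q^{x_0}(\vec P)\le C_TM^2$ bounds the value and gradient of each component affine polynomial $(\vec P,x)$ at its base point $x$, and those determine it. Closedness follows once we note that $\Q^{x_0}$ is continuous and $\M^{x_0}$ is lower semicontinuous --- the only subtlety being the locus where some component $(\vec P,x)$ has $(\vec P,x)(x)=0$: there the relevant summand $\abs{\grad(\vec P,x)}^4/(\vec P,x)(x)^2$ forces $\M^{x_0}\to+\infty$ unless $\grad(\vec P,x)=0$ as well, and in the latter case lower semicontinuity at $\vec P$ is immediate from $\M^{x_0}\ge 0$. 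That $\M^{x_0}(\vec P)<\infty$ also forces $(\vec P,x)(x)\ge 0$ and $\bigl((\vec P,x)(x)=0\Rightarrow\grad(\vec P,x)=0\bigr)$, which is precisely the defining condition of $\wtp(S^{x_0})$ for affine fields; so $\mathcal K\subset\wtp(S^{x_0})$ and $\Gg(x_0,M)=\set{(\vec P,x_0):\vec P\in\mathcal K}$ is the continuous image of the compact set $\mathcal K$, hence closed. Therefore $P_0^{x_0}\in closure(\Gg(x_0,M))=\Gg(x_0,M)$, so I may pick $\vec P_*\in\mathcal K$ with $(\vec P_*,x_0)=P_0^{x_0}$; the norm equivalence then gives $\norm{\vec P_*}_{\wtp(S^{x_0})}\le CM$.

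Finally I would set $F:=T_w^{S^{x_0}}(\vec P_*)$, with $T_w^{S^{x_0}}$ the extension map of Lemma~\ref{lem.WT}(B). Then $F\in\ctp(\rt)$, $\norm{F}_{\ct(\rt)}\le C_w\norm{\vec P_*}_{\wtp(S^{x_0})}\le CM$, and $\jet_xF=(\vec P_*,x)$ for every $x\in S^{x_0}$; in particular $\jet_{x_0}F=(\vec P_*,x_0)=P_0^{x_0}=T^{x_0}(f,M)$, and $F(x)=(\vec P_*,x)(x)=f(x)$ for each $x\in S^{x_0}\cap E$, which is all that is claimed. I expect the main (and still mild) obstacle to be the lower semicontinuity of $\M^{x_0}$ across the locus where a component polynomial vanishes at its base point, needed to close up $\mathcal K$; everything else is a direct application of Lemma~\ref{lem.WT} and the $\Q^{x_0}+\M^{x_0}\sim\norm{\cdot}_{\wtp(S^{x_0})}^2$ equivalence.
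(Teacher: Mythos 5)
Your proposal is correct and follows essentially the same route as the paper: establish $\Gg(x_0,M)\neq\void$ via Lemma \ref{lem.WT}(A) and the equivalence of $\Q^{x_0}+\M^{x_0}$ with $\norm{\cdot}_{\wtp(S^{x_0})}^2$, extract a witness Whitney field for $T^{x_0}(f,M)$, and apply $T_w^{S^{x_0}}$ from Lemma \ref{lem.WT}(B). The only difference is that you prove $\Gg(x_0,M)$ is actually closed (via compactness of $\mathcal K$ and lower semicontinuity of $\M^{x_0}$), whereas the paper sidesteps this by asserting $closure(\Gg(x_0,M))\subset\Gg(x_0,2M)$; your treatment is, if anything, the more careful of the two.
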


	\begin{proof}
		\newcommand{\sx}{S^{x_0}}
		By the definition of the trace norm, we have\begin{equation*}
		\norm{f}_{\ctp(\sx\cap E)} \leq
		\norm{f}_{\ctp(E)} \leq M\,. 
		\end{equation*}
		Thanks to Lemma \ref{lem.WT}, there exists
		$ \vec{P} = (P^x)_{x \in \sx} \in \wtp(\sx)  $
		with $ \norm{\vec{P}}_{\wtp(\sx)} \leq C_WM $ and $ P^x(x) = f(x) $ for each $ x \in \sx\cap E $. Then, for $ C_T \geq C_W $ (see \ref{eq.Gg}),
		\begin{equation*}
		\Gg(x_0,M) \neq \void\,.
		\end{equation*}
		By Definition \ref{def.Tx0}, 
		\begin{equation*}
		T^{x_0}(f,M)  \in closure(\Gg(x_0,M)) \subset \Gg(x_0,2M)\,.
		\end{equation*}
		By the definition of $ \Gg(x_0,M) $, there exists $ \vec{P}_0 \in \wtp(\sx) $ with 
		\begin{equation*}
		(\vec{P}_0,x_0) = T^{x_0}(f,M),\,
		\norm{\vec{P}}_{\wtp(\sx)} \leq CM,\,\text{ and }
		(\vec{P}_0,x)(x) = f(x)
		\text{ for each }
		x \in \sx\cap E\,.
		\end{equation*}
		Let $ T_w^{\{x_0\}} $ be as in Lemma \ref{lem.WT}. We define
		\begin{equation*}
		F := T_w^{\{x_0\}} (\vec{P}_0)\,.
		\end{equation*}
		Thus, $ F \in \ctp(\rt) $ with $ \norm{F}_{\ct(\rt)} \leq CM $ and $ \jet_x F = (\vec{P}_0,x) $ for each $ x \in \sx $. In particular, $ F(x) = f(x) $ for each $ x \in \sx\cap E $. This proves Lemma \ref{lem.Fx0}.
	\end{proof}

	The next lemma shows that for each representative point $ \xqs \in Q \in \Ls $, $ T^{\xqs}(f,M) $ is \qt{universal}.
	
	\begin{lemma}\label{lem.Txq}
		Let $ E \subset \rt $ be finite. Let $ Q \in \Ls $. Let $ \xqs \in Q $ be as in Lemma \ref{lem.CZ}(\ref{lem.CZ.rep}). Let $ f : E \to \pos $ be given. Suppose $ \norm{f}_{\ctp(E) } \leq M $. Then
		\begin{equation*}
		T^{\xqs}(f,M) \in \Gk(\xqs,4k_0,CM).
		\end{equation*}
	\end{lemma}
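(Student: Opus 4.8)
The plan is to compare the jet $P:=T^{\xqs}(f,M)$ with the one‑jet at $\xqs$ of an honest \emph{global} nonnegative interpolant of $f$, to show that these two jets differ only by an element of $CM\cdot\B(\xqs,\dq)$, and then to glue the two interpolants together by a cutoff supported in the small disk about $\xqs$ that $\dist{\xqs}{E}\ge c_0\dq$ keeps disjoint from $E$. Concretely, since $\Gk(\xqs,4k_0,CM)=\bigcap_{S\subset E,\ \#(S)\le 4k_0}\G(\xqs,S,CM)$, it suffices to produce, for each $S\subset E$ with $\#(S)\le 4k_0$, a function $G\in\ctp(\rt)$ with $\norm{G}_{\ct(\rt)}\le CM$, $G|_S=f$, and $\jet_{\xqs}G=P$; this gives $P\in\G(\xqs,S,CM)$ for every such $S$, hence $P\in\Gk(\xqs,4k_0,CM)$.

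I would fix two comparison functions. Since $\norm{f}_{\ctp(E)}\le M$, choose $F^*\in\ctp(\rt)$ with $F^*|_E=f$ and $\norm{F^*}_{\ct(\rt)}\le 2M$, and put $P^*:=\jet_{\xqs}F^*$; and Lemma~\ref{lem.Fx0} applied with $x_0=\xqs$ furnishes $F_0\in\ctp(\rt)$ with $\norm{F_0}_{\ct(\rt)}\le CM$, $F_0|_{S^{\xqs}\cap E}=f$, and $\jet_{\xqs}F_0=P$. The crux is then the estimate $P-P^*\in CM\cdot\B(\xqs,\dq)$. Set $\phi:=F_0-F^*$; it lies in $\ct(\rt)$ with $\norm{\phi}_{\ct(\rt)}\le CM$, and it vanishes on $S^{\xqs}\cap E$ because both $F_0$ and $F^*$ restrict to $f$ there, so $\jet_{\xqs}\phi\in CM\cdot\s(\xqs,S^{\xqs}\cap E)$. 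Since by \eqref{eq.Sx0} the set $S^{\xqs}$ contains each of the sets $S_\nu=S_\nu(\xqs,4k_0)\subset E$ ($\nu=1,\dots,12$), we have $S^{\xqs}\cap E\supseteq S_\nu$, hence $\s(\xqs,S^{\xqs}\cap E)\subseteq\bigcap_{\nu=1}^{12}\s(\xqs,S_\nu)\subseteq C\,\sk(\xqs,4k_0)$ by Lemma~\ref{lem.sk-loc}; and $\sk(\xqs,4k_0)\subseteq C\,\B(\xqs,\dq)$ by Lemma~\ref{lem.sk-ball}, which applies because $Q\in\Ls$ and $\xqs$ is the representative point of Lemma~\ref{lem.CZ}(\ref{lem.CZ.rep}). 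Stringing these inclusions together yields $P-P^*=\jet_{\xqs}\phi\in CM\cdot\B(\xqs,\dq)$, i.e. $\abs{\d^\alpha(P-P^*)(\xqs)}\le CM\dq^{2-\abs{\alpha}}$ for $\abs{\alpha}\le 1$.

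For the gluing, use Lemma~\ref{lem.CZ}(\ref{lem.CZ.rep}) to get $\dist{\xqs}{E}\ge c_0\dq$ and pick $\psi$ with $0\le\psi\le 1$, $\psi\equiv 1$ on $B(\xqs,\tfrac{c_0}{4}\dq)$, $\supp{\psi}\subset B(\xqs,\tfrac{c_0}{2}\dq)$ (so $\supp{\psi}\cap E=\void$), and $\abs{\d^\alpha\psi}\le C\dq^{-\abs{\alpha}}$. Given $S\subset E$ with $\#(S)\le 4k_0$, set $G:=(1-\psi)F^*+\psi F_0=F^*+\psi(F_0-F^*)$. Then $G\ge 0$ as a pointwise convex combination of the nonnegative functions $F^*,F_0$; $G=F^*$ on $E$, so $G|_S=f$; and $G=F_0$ on $B(\xqs,\tfrac{c_0}{4}\dq)$, so $\jet_{\xqs}G=P$. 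The one remaining point, $\norm{G}_{\ct(\rt)}\le CM$, is routine: from $G-F^*=\psi(F_0-F^*)$, Taylor's theorem, $\norm{F_0-F^*}_{\ct(\rt)}\le CM$, and the estimate of the previous paragraph one gets $\abs{\d^\beta(F_0-F^*)}\le CM\dq^{2-\abs{\beta}}$ throughout $\supp{\psi}$ for $\abs{\beta}\le 2$, and the Leibniz rule together with $\dq\le 1$ then gives $\norm{\psi(F_0-F^*)}_{\ct(\rt)}\le CM$, whence $\norm{G}_{\ct(\rt)}\le CM$ with a universal $C$.

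The decisive step is the jet comparison in the second paragraph. Its subtlety is that $f$, and with it the function values $F_0(\xqs)$ and $F^*(\xqs)$, need not be $O(M)$, so the smallness of $P-P^*$ cannot come from crude size bounds; it is forced entirely by the vanishing of $F_0-F^*$ on $S^{\xqs}\cap E$ together with the Calder\'on--Zygmund geometry of $Q$, which is packaged in $\s(\xqs,S^{\xqs}\cap E)\subseteq C\,\sk(\xqs,4k_0)\subseteq C\,\B(\xqs,\dq)$. Everything else is a partition‑of‑unity patch and Leibniz/Taylor bookkeeping.
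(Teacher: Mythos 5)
Your argument is correct, and it takes a genuinely different route from the paper's. The paper invokes the dichotomy of Lemma \ref{lem.bs} and splits into two scenarios: when $f \geq \ub M\dq^2$ on $E\cap 2Q$, it applies the perturbation lemma, Lemma \ref{lem.perturb}(A), to absorb the correction $T^{\xqs}(f,M)-\jet_{\xqs}\hat{F}\in CM\cdot\B(\xqs,\dq)$ back into $\Gk$; when $f \leq \ob M\dq^2$, it shows $0\in\Gk(\xqs,4k_0,BM)$ lies in the closure of $\Gg(\xqs,M)$ and uses the least-norm definition \eqref{eq.P0} to conclude $T^{\xqs}(f,M)=0$. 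You avoid both the dichotomy and Lemma \ref{lem.perturb} by exhibiting a witness directly: your jet comparison $P-P^*\in CM\cdot\B(\xqs,\dq)$ is the same chain $\s(\xqs,S^{\xqs}\cap E)\subseteq C\,\sk(\xqs,4k_0)\subseteq C\,\B(\xqs,\dq)$ that the paper uses in its first scenario, but you then convert it via Taylor's theorem into the pointwise bounds $\abs{\d^\beta(F_0-F^*)}\leq CM\dq^{2-\abs{\beta}}$ on $\supp{\psi}$ needed for the Leibniz estimate, while the convex-combination gluing makes nonnegativity and interpolation on all of $E$ automatic because $\supp{\psi}$ misses $E$ by Lemma \ref{lem.CZ}(\ref{lem.CZ.rep}). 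What each approach buys: yours is shorter, self-contained at this point in the paper, and yields the formally stronger conclusion $T^{\xqs}(f,M)\in\G(\xqs,E,CM)$ from a single interpolant valid for every $S$ at once; the paper's reuses machinery (Lemmas \ref{lem.bs} and \ref{lem.perturb}) imported from \cite{JL20} that is needed elsewhere, and its small-data scenario additionally identifies $T^{\xqs}(f,M)$ as the zero polynomial, a fact the proof of Lemma \ref{lem.ext-loc} later relies on (to pass from Scenario 2 to $\Delta(f,M,Q)=0$). One minor inaccuracy in your closing remark: the $C^2$ norm here includes the zeroth-order term, so $F_0(\xqs)$ and $F^*(\xqs)$ are in fact $O(M)$; the genuine point is that you need the difference of the jets to be $O(M\dq^2)$ rather than merely $O(M)$, and that is exactly what the $\s$--$\sk$--$\B$ chain provides.
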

	
	\begin{proof}
		Since $ \norm{f}_{\ctp(E)} \leq M $, we have $ \G(\xqs,4k_0,C_0M) \neq \void $.

		Let $ \ub $ be a sufficiently large number to be determined. By Lemma \ref{lem.bs}, there exists $ \ob $ such that at least one of the following scenarios is true.
		\begin{enumerate}[\text{Scenario} 1]
			\item $ f(x) \geq \ub M\dq^2 $ for all $ x \in E \cap 2Q $.
			\item $ f(x) \leq \ob M\dq^2 $ for all $ x \in E \cap 2Q $. 
		\end{enumerate}
		
		
		Suppose we are in the first scenario.

		\newcommand{\hf}{\hat{F}}
		Since $ \norm{f}_{\ctp(E)} \leq M $, there exists
		\begin{equation*}
		\hf \in \ctp(\rt)\text{ with } \norm{\hf}_{\ct(\rt)} \leq C_1M,\,
		\hf_E = f,\,\text{ and }
		\jet_{\xqs}\hf \in \G(\xqs,4k_0,C_1M)\,.
		\end{equation*}
		
		\newcommand{\tf}{\tilde{F}}
		\newcommand{\txq}{T^{\xqs}}
		Let $ \tf $ be as in Lemma \ref{lem.Fx0} with $ x_0 = \xqs $. Thus,
		\begin{equation*}
		\norm{\tf}_{\ct(\rt)} \leq C_2M,\,
		\tf\big|_{S^{\xqs}\cap E} = f,\,
		\text{ and }
		\jet_{\xqs}\tf = \txq(f,M).
		\end{equation*}
		
		Therefore, we have 
		\begin{enumerate}[(a)]
			\item $ \norm{\tf -\hf}_{\ct(\rt)} \leq C_3M $;
			\item $ (\tf - \hf)\big|_{S^{\xqs}\cap E} = 0 $; and
			\item $ \jet_{\xqs}(\tf - \hf) = \txq(f,M) - \jet_{\xqs}\hf $. 
		\end{enumerate}
		By (a), (b), the definition of $ \s $, and Lemma \ref{lem.sk-loc}, we have
		\begin{equation*}
		\jet_{\xqs}(\tf - \hf) \in C_3M\cdot \s(\xqs,S^{\xqs}\cap E) \subset C_4M\cdot \sk(\xqs,4k_0).
		\end{equation*}
		By Lemma \ref{lem.sk-ball}, we then have
		\begin{equation*}
		\jet_{\xqs}(\tf - \hf) \in C_5M\cdot \B(\xqs,\dq)\,.
		\end{equation*}
		Now, we pick 
		\begin{equation}
		\ub \geq B_0 \cdot \max\set{1,C_0,C_5}\,.
		\label{eq.ub}
		\end{equation}
		Here, $ B_0 $ is as in Lemma \ref{lem.perturb}(A). We then have
		\begin{equation*}
		\begin{split}
		\txq(f,M) &\in \jet_{\xqs}\hf + C_5M\cdot \B(\xqs,\dq)\\
		&\subset \Gk(\xqs,4k_0,C_1M) + C_5M\cdot\B(\xqs,\dq)\\
		&\subset \Gk(\xqs,4k_0,C_6M) + C_6M\cdot\B(\xqs,\dq)\\
		&\subset \Gk(\xqs,4k_0,C_7M).
		\end{split}
		\end{equation*}
		Note that for the last inclusion, we used the assumption in Scenario 1, the choice of $ \ub $ in \eqref{eq.ub}, and Lemma \ref{lem.perturb}.
		
		This proves Lemma \ref{lem.Txq} for the first scenario.
		
		Now we turn to the second scenario.

		By Lemma \ref{lem.perturb}, we know that
		\begin{equation}
		0 \in \Gk(\xqs,4k_0,BM)\,.
		\label{eq.3.7.1}
		\end{equation}
		Here, $ B $ depends only on $ \ob $. Thanks to Lemma \ref{lem.FP-G}, we know that
		\begin{equation}
		0 \in \Gk(\xqs,4k_0,BM) \subset 
		\G(\xqs,E,CBM)
		\subset \G(\xqs,S^{\xqs}\cap E,C'BM).
		\label{eq.3.7.2}
		\end{equation}
		
		Since $ \norm{f}_{\ctp(E)} \leq M $, we have $ \Gg(\xqs,M) \neq \void $, where $ \Gg(\xqs,M) $ is as in \eqref{eq.Gg}. 
		Taking $ C_T $ to be sufficiently large in \eqref{eq.Gg} and using Lemma \ref{lem.WT}, we see that
		\begin{equation}
		\G(\xqs,S^{\xqs}\cap E,C_1BM) \subset \Gg(\xqs,M).
		\label{eq.3.7.3}
		\end{equation}
		From \eqref{eq.3.7.1}, \eqref{eq.3.7.2}, and \eqref{eq.3.7.3}, we see that
		\begin{equation}
		0 \in  \Gg(\xqs,M) \subset closure(\Gg(\xqs,M)).
		\label{eq.3.7.4}
		\end{equation}
		Since $ \txq(f,M) $ is defined to be the element of the least norm in $ closure(\Gg(\xqs,M)) $, \eqref{eq.3.7.1} and \eqref{eq.3.7.4} imply
		\begin{equation*}
		\txq(f,M) = 0 \in \Gk(\xqs,4k_0,BM).
		\end{equation*}
		This concludes the second scenario as well as the proof of Lemma \ref{lem.Txq}.
		
	\end{proof}

	The next results shows how to construct a local extension operator of bounded depth that also takes a prescribed jet at $ \xqs \in Q \in \Ls $.

	\begin{lemma}\label{lem.ext-loc}
		Let $ E \subset \rt $ be finite. Let $ Q \in\Ls $. Then there exist universal constants $ C, D_0 $ and a map \begin{equation*}
		\E_Q : \ctp(E) \times [0,M) \to \ct(100Q)
		\end{equation*}
		such that the following hold.
		\begin{enumerate}[(A)]
			\item Given $ f \in \ctp(E) $ with $ \norm{f}_{\ctp(E)} \leq M $, we have \begin{enumerate}[{\rm(i)}]
				\item $ \E_Q(f,M) \geq 0$ on $ 100Q $;
				\item $ \E_Q(f,M) = f $ on $ E \cap 2Q $; and
				\item $ \norm{\E_Q(f,M)}_{\ct(100Q)} \leq CM $.
				\item $ \jet_{\xqs} \E_Q(f,M) = T^{\xqs}(f,M)$, where $ T^{\xqs} $ is as in Definition \ref{def.Tx0} (with $ x_0 = \xqs $, and $ \xqs $ is as in Lemma \ref{lem.CZ}(\ref{lem.CZ.rep})). As a consequence of Lemma \ref{lem.Txq}, we have $ \jet_{\xqs}\E_Q(f,M) \in \Gk(\xqs,4k_0,CM) $. 
				 .
			\end{enumerate}
			
			\item For each $ x \in 100Q $, there exists a set $ S_Q(x) \subset E $ with $ \#(S_Q(x)) \leq D_0 $ such that the following holds: Given $ f,g \in \ctp(E) $ with $ \norm{f}_{\ctp(E)},\norm{g}_{\ctp(E)} \leq M $ and $ f|_{S_Q(x)} = g|_{S_Q(x)} $, we have
			\begin{equation*}
			\d^\alpha \E_Q(f,M)(x) = \d^\alpha \E_Q(g,M)(x)
			\for\abs{\alpha}\leq 2\,.
			\end{equation*}
		\end{enumerate}
	\end{lemma}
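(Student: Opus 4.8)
The plan is to flatten the graph carrying $E\cap 2Q$, reduce to a one-dimensional extension problem \emph{after subtracting a common reference one-jet}, extend transversally by a constant, transfer back, and finally correct the one-jet at $\xqs$ by gluing to a model function. Concretely, I would fix, via Lemma \ref{lem.CZ}(\ref{lem.CZ.graph}), the $C^2$-diffeomorphism $\Phi$ flattening the graph through $E\cap 2Q$, put $\widetilde{E}:=\Phi(E\cap 2Q)\subset\R\times\set{0}$, identify it with a finite subset of $\R$, and view $f|_{E\cap 2Q}$ as one-dimensional data $\widetilde{f}$ on $\widetilde{E}$. Since $\norm{f}_{\ctp(E)}\le M$ we have $\G(\xqs,4k_0,CM)\neq\void$, so Lemma \ref{lem.Txq} gives $P^\sharp:=T^{\xqs}(f,M)\in\Gk(\xqs,4k_0,CM)$, and Lemma \ref{lem.Fx0} produces a companion $F^\sharp\in\ctp(\rt)$ with $\norm{F^\sharp}_{\ct(\rt)}\le CM$, $\jet_{\xqs}F^\sharp=P^\sharp$, and $F^\sharp=f$ on $S^{\xqs}\cap E$. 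The role of $P^\sharp$ is to be a \emph{universal} reference: because $P^\sharp\in\G(\xqs,S,CM)$ for \emph{every} $S\subset E$ with $\#(S)\le 4k_0$, the residual $f-P^\sharp$ restricted to any triple of $E\cap 2Q$ is a Taylor remainder of a norm-$CM$ nonnegative function, so along the graph — whose curvature is only $\sim\delta_Q^{-1}$ — the values, first and second divided differences of $f-P^\sharp$ are $O(M\delta_Q^2)$, $O(M\delta_Q)$, $O(M)$; this is exactly the subtraction that cancels the dangerous curvature contribution $\phi''\,\d_2(\cdot)$ and keeps the one-dimensional trace norm (at scale $\delta_Q$) at $O(M)$.

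Next I would apply the dichotomy of Lemma \ref{lem.bs}, with the threshold $\ub$ fixed below. In the small-value scenario $f\le\ob M\delta_Q^2$ on $E\cap 2Q$, so $P^\sharp=0$ (by the proof of Lemma \ref{lem.Txq}) and Lemma \ref{lem.TC-small} gives $\abs{\d^\alpha F}\le CM\delta_Q^{2-\abs\alpha}$ on $100Q$ for a genuine interpolant; we then extend $\widetilde{f}$ with the \emph{nonnegative} operator $\Eb$ of Theorem \ref{thm.bd-1d}. In the large-value scenario $f\ge\ub M\delta_Q^2$ on $E\cap 2Q$, we extend the (unsigned) residual $f-P^\sharp$ with the \emph{linear} operator $\Ebpm$ of Theorem \ref{thm.bd-1dpm}. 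In either case, after rescaling the one-dimensional picture by $\delta_Q$, the resulting extension $G:\R\to\R$ obeys $\abs{\d^m G}\le CM\delta_Q^{2-m}$ for $m=0,1,2$; extending $G$ by a constant in the transversal variable gives $\widetilde{H}$ with $\abs{\d^\alpha\widetilde{H}}\le CM\delta_Q^{2-\abs\alpha}$, so by Lemma \ref{lem.comp} and its proof the function $H_0:=\widetilde{H}\circ\Phi$ satisfies $\abs{\d^\alpha H_0}\le CM\delta_Q^{2-\abs\alpha}$ on $100Q$, and $H:=H_0+P^\sharp$ satisfies $\norm{H}_{\ct(100Q)}\le CM$ (using also $\norm{P^\sharp}_{\ct(100Q)}\le CM$, which follows from $P^\sharp\in\G(\xqs,\void,CM)$ and \eqref{eq.TC2}), $H=f$ on $E\cap 2Q$, and — in the small-value scenario, where $H=H_0$ and $G\ge0$ — $H\ge0$ on $100Q$.

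Finally I would install the jet. Let $\theta$ be a cutoff with $\theta\equiv 1$ near $\xqs$, $\supp{\theta}\subset B(\xqs,c_0\delta_Q/2)$ — which misses $E$, by Lemma \ref{lem.CZ}(\ref{lem.CZ.rep}) — and $\abs{\d^\alpha\theta}\le C\delta_Q^{-\abs\alpha}$, and set
\begin{equation*}
\E_Q(f,M):=\theta\,F^\sharp+(1-\theta)\,H .
\end{equation*}
Then $\E_Q(f,M)=H=f$ on $E\cap 2Q$ (where $\theta\equiv0$) and $\jet_{\xqs}\E_Q(f,M)=\jet_{\xqs}F^\sharp=T^{\xqs}(f,M)$ (since $\theta\equiv1$ and $\nabla\theta(\xqs)=0$), giving properties (ii) and (iv). For (iii), write $\E_Q(f,M)=H+\theta(F^\sharp-H)$ with $F^\sharp-H=(F^\sharp-\jet_{\xqs}F^\sharp)-H_0$; on $\supp{\theta}$, Taylor's theorem controls $F^\sharp-\jet_{\xqs}F^\sharp$ and the fine bounds control $H_0$, so $\abs{\d^\gamma(F^\sharp-H)}\le CM\delta_Q^{2-\abs\gamma}$ there, whence the Leibniz rule with $\abs{\d^\alpha\theta}\le C\delta_Q^{-\abs\alpha}$ gives $\norm{\theta(F^\sharp-H)}_{\ct(100Q)}\le CM$. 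For (i): off $\supp{\theta}$ we have $\E_Q(f,M)=H\ge0$ on $100Q$, while on $\supp{\theta}$ it is a convex combination of the nonnegative functions $F^\sharp$ and $H$. For (B): $\E_Q(f,M)(x)$ depends on $\theta(x)$ (which depends only on $Q$), on $F^\sharp$ and $P^\sharp=T^{\xqs}(f,M)$ (which depend on $f$ only through $f|_{S^{\xqs}\cap E}$, a set of at most $48k_0+1$ points), and on the one-dimensional extension evaluated at one point of $\R$, which by Theorem \ref{thm.bd-1d}(iii) / \ref{thm.bd-1dpm}(iii) depends on $\widetilde{f}$, hence on $f$, at a universally bounded set of points of $E\cap 2Q$; so $S_Q(x)$ is the union of these, with $\#(S_Q(x))\le D_0$.

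The step I expect to be the main obstacle is proving $H\ge0$ on all of $100Q$ in the large-value scenario, where $H=P^\sharp+H_0$ with $P^\sharp$ only a \emph{candidate} (hence possibly negative, $P^\sharp\ge -CM\delta_Q^2$ on $100Q$ by Lemma \ref{lem.TC}) and $H_0$ an unsigned extension of size $O(M\delta_Q^2)$ — a cancellation on a $\delta_Q^2$ budget. This is why $\ub$ in Lemma \ref{lem.bs} must be taken large: the lower bound $f\ge\ub M\delta_Q^2$ on $E\cap 2Q$, together with the gradient bound $\abs{\nabla P^\sharp}\lesssim\sqrt{M\,P^\sharp(\xqs)}$ from \eqref{eq.TC2} and the consequent lower bound $P^\sharp(\xqs)\gtrsim\ub M\delta_Q^2$ (obtained, as in the proof of Lemma \ref{lem.Txq}, from $P^\sharp$ lying within $CM\cdot\B(\xqs,\delta_Q)$ of the one-jet at $\xqs$ of a genuine nonnegative interpolant), forces $H$ to stay nonnegative once $\ub$ exceeds a universal constant. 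Carrying this out requires tracking the ambient-scale and $\delta_Q$-scale derivative estimates simultaneously throughout and localizing the transversal extension carefully so that this positivity budget is never overspent.
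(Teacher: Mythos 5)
Your proposal is correct and follows essentially the same route as the paper: flatten via $\Phi$, subtract the universal jet $T^{\xqs}(f,M)$, apply $\Eb$ or $\Ebpm$ to the one-dimensional residual according to whether that jet vanishes, extend transversally by a constant, add the jet back, and install it at $\xqs$ with a cutoff supported away from $E$, with nonnegativity in the large-value scenario secured exactly as you describe by $P^\sharp\gtrsim \ub M\delta_Q^2$ on $100Q$ dominating the $O(M\delta_Q^2)$ residual. The only cosmetic difference is that you glue to the companion function $F^\sharp$ from Lemma \ref{lem.Fx0} rather than to the polynomial $T^{\xqs}(f,M)$ itself, which changes nothing essential.
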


	Before dwelling into the proof, which is rather lengthy, we briefly describe the strategy. 
	
	For part (A), we use Lemma \ref{lem.bs} to divide the situation at hand into two cases: one where the local data is \qt{sufficiently big} and one where the local data is \qt{sufficiently small}. For the big case, we can, without losing nonnegativity, subtract the prescribed jet from the local data, solve the straightened one-dimensional interpolation problem, and add back the prescribed jet. For the small case, we can directly solve the straightened one-dimensional problem and prescribe a zero jet. The reason behind the two separate methods is that we need to handle the following two matters simultaneously: preserve nonnegativity of the extension, and avoid large second derivatives from composition with the diffeomorphism. 
	
	We now present the details.

	\begin{proof}[Proof of Lemma \ref{lem.ext-loc}]
		
		First, we list the relevant ingredients in the construction of $ \E_Q $. 
		
		\begin{itemize}
			\item Let $ \Phi $ be the diffeomorphism associated with $ Q $ as in Lemma \ref{lem.CZ}(\ref{lem.CZ.graph}).
			
			\item Let $ \xqs,c_0 $ be as in Lemma \ref{lem.CZ}(\ref{lem.CZ.rep}). Let $ \psi \in \ctp(\rt) $ be a cutoff function such that
			\begin{equation}
			\eqindent
			0 \leq \psi \leq 1,\,
			\psi \equiv 1
			\text{ near }\xqs,\,
			\supp{\psi} \subset B(\xqs,c_0\delta_Q),\text{ and }
			\abs{\d^\alpha\psi}\leq C\dq^{2-\abs{\alpha}}.
			\label{eq.psi}
			\end{equation}
			
			\item Let $ T^{\xqs} $ be as in Definition \ref{def.Tx0}, with $ x_0 = \xqs $.  
			
			\item Define
			\begin{equation}
			\eqindent
			\Delta(f,M,Q) := \begin{cases}
			1 &\text{ if } {T^{\xqs}(f,M)} \text{ is not the zero polynomial}\\
			0 &\text{ otherwise}
			\end{cases}\,.
			\label{eq.dfmq}
			\end{equation}
			
			\item Let $ \Eb $ and $ \Ebpm $ be as in Theorems \ref{thm.bd-1d} and \ref{thm.bd-1dpm}. 
			
			\item Let $ V $ be the map defined by $ V(g)(s,t) := g(s) $ for all $ g : \R \supset I \to \R $.

		\end{itemize}

		We prove (A) first.

		We define
		\begin{equation}
		\begin{split}
		\E_Q(f,M) &:= {T^{\xqs}(f,M)} + (1-\psi)\cdot \widetilde{\mathcal{E}}_Q(f,M),\,\text{ where }\\
		\widetilde{\mathcal{E}}_Q(f,M)
		&:= 
			\bigg(
			\overbrace{{ V \circ  
				\underbrace{
					\bigg[
						\brac{  
						\Delta(f,M,Q) \Eb + (1-\Delta(f,M,Q))\Ebpm 
					} 
					\underbrace{
						\brac{(f - {T^{\xqs}(f,M)}\big|_{E}) \circ \Phi^{-1}\big|_{\R \times \set{0}} }
					}_{\text{straightening local data}}
				\bigg]
				}_{\text{one-dimensional extension}}   }}^{\text{vertical extension}}
			\bigg) 
			\circ \Phi\,.
		\end{split}
		\label{eq.EQ}
		\end{equation}

		We now analyze the validity of {(i)-(iv)}. We break down the argument into four claims.
		
		\begin{claim}\label{claim.3.1}
			{\rm(i)} holds.
		\end{claim}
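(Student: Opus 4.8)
The plan is to prove $\E_Q(f,M)\ge 0$ on $100Q$ by the dichotomy suggested in the strategy discussion. Recall from \eqref{eq.EQ} that $\E_Q(f,M)=T^{\xqs}(f,M)+(1-\psi)\cdot\widetilde{\mathcal{E}}_Q(f,M)$, where $\psi$ satisfies \eqref{eq.psi}; in particular $0\le 1-\psi\le 1$, and $\E_Q(f,M)\equiv T^{\xqs}(f,M)$ near $\xqs$. Fix a threshold $\ub$ at least as large as the one used in the proof of Lemma~\ref{lem.Txq}, enlarged further as the estimates below require, and let $\ob$ be the corresponding constant from Lemma~\ref{lem.bs}. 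Since $\norm{f}_{\ctp(E)}\le M$ we have $\G(\xqs,4k_0,C_0M)\ne\void$, so Lemma~\ref{lem.bs} places us in one of two scenarios: $(1)$ $f(x)\ge\ub M\dq^2$ for all $x\in E\cap 2Q$, or $(2)$ $f(x)\le\ob M\dq^2$ for all $x\in E\cap 2Q$ (note $E\cap 2Q\ne\void$ since $Q\in\Ls$).

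\emph{Scenario~1 (the big case).} First I would check that $\Delta(f,M,Q)=1$ in \eqref{eq.dfmq}, so that $\widetilde{\mathcal{E}}_Q$ is built from the nonnegativity-preserving one-dimensional operator $\Eb$. As in the first scenario of the proof of Lemma~\ref{lem.Txq}, $T^{\xqs}(f,M)\in\Gk(\xqs,4k_0,CM)\subset\G(\xqs,\void,CM)$; testing the membership $T^{\xqs}(f,M)\in\G(\xqs,\{x\},CM)$ against the singletons $x\in E\cap 2Q$ and using Taylor's theorem gives $\abs{T^{\xqs}(f,M)(x)-f(x)}\le CM\dq^2$, hence $T^{\xqs}(f,M)(x)\ge(\ub-C)M\dq^2>0$; feeding this into the gradient bound \eqref{eq.TC2} of Lemma~\ref{lem.TC} forces $T^{\xqs}(f,M)(\xqs)\ge c\ub M\dq^2$, so $T^{\xqs}(f,M)$ is not the zero polynomial. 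Then the distance-to-zero-set estimate of Lemma~\ref{lem.TC} shows $\{T^{\xqs}(f,M)=0\}$ lies at distance $\ge C''\sqrt{c\ub}\,\dq$ from $\xqs$, which for $\ub$ large exceeds $\diam(100Q)$; since $T^{\xqs}(f,M)$ is affine, this yields $T^{\xqs}(f,M)\ge c'\ub M\dq^2$ on all of $100Q$. Finally, the one-dimensional data handed to $\Eb$ is the $\Phi$-straightening of $f-T^{\xqs}(f,M)\big|_E$, which by the Taylor estimate above is $\ge -CM\dq^2$ on $E\cap 2Q$; since $\Eb$ is applied to this data after the harmless nonnegative normalization built into the construction, its output is $\ge -CM\dq^2$, and composition with $V$ and $\Phi$ preserves this bound, so $\widetilde{\mathcal{E}}_Q(f,M)\ge -CM\dq^2$ on $100Q$. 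Combining, $\E_Q(f,M)\ge c'\ub M\dq^2-CM\dq^2\ge 0$ on $100Q$ once $\ub$ is large enough.

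\emph{Scenario~2 (the small case).} The second scenario of the proof of Lemma~\ref{lem.Txq} gives $T^{\xqs}(f,M)=0$, so $\Delta(f,M,Q)=0$, the operator used is $\Ebpm$, and $\E_Q(f,M)=(1-\psi)\widetilde{\mathcal{E}}_Q(f,M)$; since $1-\psi\ge 0$ and $\E_Q(f,M)\equiv 0$ where $\psi\equiv 1$, it suffices to show $\widetilde{\mathcal{E}}_Q(f,M)\ge 0$ on $100Q$. The one-dimensional data is the $\Phi$-straightening of $f\big|_E\ge 0$, with all values on $E\cap 2Q$ bounded by $\ob M\dq^2$. I would use $\norm{f}_{\ctp(E)}\le M$ together with a Lemma~\ref{lem.TC-small}-type argument (applied with $M$ replaced by $\ob M$) to produce an extension of $f\big|_{E\cap 2Q}$ with $\abs{\d^\alpha(\cdot)}\le C(\ob)M\dq^{2-\abs{\alpha}}$ on $100Q$; by Lemma~\ref{lem.comp} this straightens under $\Phi$ to one-dimensional data admitting a $C^2(\R)$ extension that is $\le C(\ob)M\dq^2$ in sup norm near $\Phi(2Q)$. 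Applying $\Ebpm$ and exploiting its bounded depth (each value depends only on a few nearby data points) keeps the resulting one-dimensional function $\le C(\ob)M\dq^2$ in sup norm there; composing back through $V$ and $\Phi$ (with the derivative bounds controlled by Lemma~\ref{lem.comp}) and noting that the small-case construction is arranged so that this small one-dimensional extension is nonnegative on the relevant interval, we conclude $\widetilde{\mathcal{E}}_Q(f,M)\ge 0$ on $100Q$, hence $\E_Q(f,M)\ge 0$ there.

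The step I expect to be the main obstacle is the bookkeeping that preserves nonnegativity alongside the smallness and derivative estimates. In Scenario~1 this amounts to quantifying how large $\ub$ must be so that the provable lower bound $T^{\xqs}(f,M)\gtrsim\ub M\dq^2$ on $100Q$ dominates the unavoidable $O(M\dq^2)$ undershoot of the straightened local data (and so that the distance-to-zero-set estimate clears all of $100Q$); in Scenario~2 it is extracting genuine nonnegativity of $\widetilde{\mathcal{E}}_Q$ from the merely sign-indefinite output of the one-dimensional operator, which is exactly why $f$ must be so small and why $T^{\xqs}(f,M)$ is forced to vanish there. Everything else — the derivative bounds from Lemmas~\ref{lem.comp} and \ref{lem.TC-small}, the locality of $\Eb$ and $\Ebpm$, and the algebra of composing with $V$ and $\Phi$ — is routine.
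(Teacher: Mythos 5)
Your overall architecture is the right one and matches the paper's: split via Lemma \ref{lem.bs} into a big-data scenario, where $T^{\xqs}(f,M)$ is shown to be bounded below by $c\,\ub M\dq^2$ on all of $100Q$ and dominates the $O(M\dq^2)$ residual extension, and a small-data scenario, where $T^{\xqs}(f,M)=0$ and nonnegativity must come directly from the one-dimensional extension. Your derivation of the lower bound for $T^{\xqs}(f,M)$ on $100Q$ via Lemma \ref{lem.TC} is also essentially the paper's.

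The genuine gap is the assignment of the two one-dimensional operators to the two scenarios, and the two unsupported claims you need to patch over it. You give $\Eb$ to the big case and $\Ebpm$ to the small case (which is what the literal formula \eqref{eq.EQ} combined with \eqref{eq.dfmq} says --- the definition of $\Delta$ there appears to have the roles swapped relative to the paper's own proof). This cannot work as stated. In the big case the data handed to the one-dimensional operator is the straightening of $f - T^{\xqs}(f,M)\big|_E$, which is only bounded below by $-CM\dq^2$ and is in general genuinely negative at some data points; $\Eb$ is an operator defined on nonnegative one-dimensional data only, so it is simply not applicable, and the \qt{harmless nonnegative normalization built into the construction} that you invoke does not exist in the construction. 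In the small case, $\Ebpm$ is a linear operator with no sign guarantee whatsoever; \qt{noting that the small-case construction is arranged so that this small one-dimensional extension is nonnegative} is not something Theorem \ref{thm.bd-1dpm} provides --- smallness of the data bounds $\abs{\Ebpm(\cdot)}$ by $CM\dq^2$ but does not make it nonnegative. The fix, and what the paper's proof actually does (see \eqref{eq.EQ-small} and \eqref{eq.EQ-big}), is to swap the operators: in the small case apply $\Eb$ to the untouched nonnegative data $f\circ\Phi^{-1}$, so that $\E_Q(f,M)=(1-\psi)\cdot(\cdots)\geq 0$ is immediate from Theorem \ref{thm.bd-1d} and $0\leq\psi\leq 1$; in the big case apply the linear $\Ebpm$ to the sign-indefinite residual, use Lemma \ref{lem.TC-small} (through the straightening and back) to bound the resulting function by $CM\dq^2$ in absolute value on $100Q$, and let the lower bound $T^{\xqs}(f,M)\geq c(\ub)M\dq^2$ on $100Q$ absorb it for $\ub$ large. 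With that swap, your two scenarios close exactly as you outlined.
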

	
		\begin{proof}[Proof of Claim \ref{claim.3.1}]
			Suppose $ {T^{\xqs}(f,M)}  \equiv 0$. Then $ \Delta(f,M,Q) = 0 $ (see \eqref{eq.dfmq}). Formula \eqref{eq.EQ} simplifies to
			\begin{equation}
			\E_Q(f,M)= (1-\psi) \cdot \brac{V \circ \Eb \brac{f \circ \Phi^{-1}\big|_{\R \times \set{0}} }  }\circ \Phi\,.
			\label{eq.EQ-small}
			\end{equation}
			Since $ \Eb $ preserve nonnegativity and $ 0 \leq \psi \leq 1 $, we have $ \E_Q(f,M) \geq 0 $.
			
			\newcommand{\pqs}{{T^{\xqs}(f,M)}}
			
			Suppose $ {T^{\xqs}(f,M)} $ is not the zero polynomial. 
			
			Formula \eqref{eq.EQ} now reads as follows.
			\begin{equation}
			\E_Q(f,M) = \pqs + (1-\psi )\cdot \brac{  V\circ \Ebpm \brac{(f - \pqs\big|_E) \circ \Phi^{-1}\big|_{\R\times \set{0}}} \circ \Phi }.
			\label{eq.EQ-big}
			\end{equation}

			By Lemma \ref{lem.Txq}, we know that 
			\begin{equation}
			\pqs \in \Gk(\xqs,4k_0,CM).
			\label{eq.3.8.1}
			\end{equation}
			In particular, $ \Gk(\xqs,4k_0,CM) \neq \void $. Thanks to Lemma \ref{lem.FP-G}, there exists $ F \in \ctp(\rt) $ such that $ F = f $ on $ E $, $\norm{F}_{\ct(\rt)} \leq CM $, and
			\begin{equation}
			\jet_{\xqs}F \in \G(\xqs,E,CM) \subset \Gk(\xqs,4k_0,CM).
			\label{eq.3.8.2}
			\end{equation}
			Thanks to Lemma \ref{lem.G-G}, \eqref{eq.3.8.1}, \eqref{eq.3.8.2}, and Taylor's theorem, we have
			\begin{equation}
			\abs{\d^\alpha (F - \pqs)(x)} \leq CM\dq^{2-\abs{\alpha}}
			\for x \in 100Q \text{ and } \abs{\alpha} \leq 2\,.
			\label{eq.F-pqs}
			\end{equation}
			Combining Lemma \ref{lem.CZ}(\ref{lem.CZ.graph}) and \eqref{eq.F-pqs}, we see that
			\begin{equation}
			\norm{(F - \pqs)\circ \Phi^{-1}\big|_{\R\times\set{0}}}_{\ct(I(Q))} \leq CM.
			\label{eq.F-pqs-2}
			\end{equation}
			Here, $ I(Q) := \set{t \in \R : \Phi^{-1}(t,0) \in 100Q} $. 
			
			Since $ \Ebpm $ is bounded by Theorem \ref{thm.bd-1dpm}, we can conclude from \eqref{eq.F-pqs-2} that
			\begin{equation*}
			\norm{ \Ebpm \brac{(f - \pqs\big|_E) \circ \Phi^{-1}\big|_{\R\times \set{0}}} }_{\ct(I(Q))} \leq CM;
			\end{equation*}
			and
			\begin{equation}
			\norm{ V\circ \Ebpm \brac{(f - \pqs\big|_E) \circ \Phi^{-1}\big|_{\R\times \set{0}}} }_{\ct(\Phi(100Q))} \leq C'M.
			\label{eq.VE}
			\end{equation}
			Write 
			\begin{equation}
			G := V\circ \Ebpm \brac{(f - \pqs\big|_E) \circ \Phi^{-1}\big|_{\R\times \set{0}}}
			\label{eq.GGG}
			\end{equation}
			Thanks to \eqref{eq.F-pqs}, we see that
			\begin{equation}
			0 \leq G(x) \leq CM\dq^2
			\text{ for each }
			x \in \Phi(E \cap 2Q).
			\label{eq.Gb-small}
			\end{equation}
			By Lemma \ref{lem.TC-small} and \eqref{eq.Gb-small}, we see that
			\begin{equation}
			\abs{\d^\alpha G(x)} \leq CM\dq^{2-\abs{\alpha}}
			\text{ for } x\in \Phi(100Q).
			\label{eq.Gbd-small}
			\end{equation}
			By Lemma \ref{lem.CZ}(\ref{lem.CZ.graph}), Lemma \ref{lem.comp}, and \eqref{eq.Gbd-small}, we have
			\begin{equation*}
			\norm{G\circ \Phi}_{\ct(100Q)} \leq CM.
			\end{equation*}
			Now, thanks to \eqref{eq.Gb-small},
			\begin{equation*}
			0 \leq G\circ \Phi(x) \leq CM\dq^2
			\text{ for each } x \in E \cap 2Q
			\end{equation*}
			Thanks to Lemma \ref{lem.TC-small} again, we have
			\begin{equation}
			\abs{\d^\alpha (G\circ \Phi)(x)} \leq CM\dq^{2-\abs{\alpha}}
			\for 
			x \in 100Q \text{ and } \abs{\alpha} \leq 2.
			\label{eq.GPhi-d-small}
			\end{equation}
			
			Now, thanks to \eqref{eq.GPhi-d-small} and the fact that $ 0 \leq \psi \leq 1 $, as long as $ \pqs \geq B'M\dq^2 $ on $ 100Q $ for some sufficiently large number $ B' $, we can conclude that $ \E_Q(f,M) \geq 0 $ on $ 100Q $. 
			
			We examine the value of $ \pqs $ on $ 100Q $. 
			
			Since $ \pqs $ is not the zero polynomial, Scenario 2 in the proof of Lemma \ref{lem.Txq} must be false. Therefore, we must be in Scenario 1, namely, 
			\begin{equation}
			f(x) \geq \ub M\dq^2
			\text{ for all } x \in E \cap 2Q\,.
			\label{eq.fbig}
			\end{equation}

			First we want to show that 
			\begin{equation}
			\pqs(\xqs) \geq C(\sqrt{\ub} - 1)^2M\dq^2.
			\label{eq.pqs-big}
			\end{equation}

			Suppose for a contradiction, that $ \pqs(\xqs) \leq B_0M\dq^2 $ for some $ B_0 > 0 $ to be determined. Since $ \pqs \in \Gk(\xqs,4k_0,CM) $, for any $ x \in E \cap 2Q $, there exists $ F \in \ctp(\rt) $ with $ F(x) = f(x) $ and $ \jet_{\xqs}F = \pqs $. By Lemma \ref{lem.TC} and Taylor's theorem, we have
			\begin{equation*}
			\abs{\grad F(x)} \leq \abs{\nabla F(\xqs)} + C\norm{F}_{\ct(\rt)} \dq \leq C'(\sqrt{B_0} + 1) M\dq
			\for
			x \in 2Q.
			\end{equation*}
			By Taylor's theorem again, we have
			\begin{equation}
			F(x) \leq F(\xqs) + C\dq\cdot \sup_{y \in 2Q}\abs{\nabla F(y)} \leq C_0(\sqrt{B_0} + 1)^2M\dq^2
			\for
			x \in 2Q.
			\label{eq.F-small}
			\end{equation}
			If $ B_0 < C_0^{-1}(\sqrt{\ub} - 1) $ with $ C_0 $ as in \eqref{eq.F-small}, \eqref{eq.F-small} would contradict \eqref{eq.fbig}. Therefore, \eqref{eq.pqs-big} holds.
			
			Now, thanks to Lemma \ref{lem.TC} and \eqref{eq.pqs-big}, we have
			\begin{equation*}
			\dist{\xqs}{\set{\pqs =0}} \geq C(\sqrt{\ub} -1 )\dq.
			\end{equation*}
			For sufficiently large $ \ub $, this implies that 
			\begin{equation}
			\pqs(x) \geq C(\sqrt{\ub}-1)\dq^2
			\for
			x \in 100Q.
			\label{eq.pqs-big-Q}
			\end{equation}
			Combining \eqref{eq.psi}, \eqref{eq.EQ-big}, \eqref{eq.GGG}, \eqref{eq.GPhi-d-small}, and \eqref{eq.pqs-big-Q} and picking $ \ub $ to be sufficiently large, we see that (i) holds. This proves Claim \ref{claim.3.1}.
			
		\end{proof}

		\begin{claim}\label{claim.3.2}
			{\rm(ii)} holds.
		\end{claim}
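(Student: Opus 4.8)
The plan is to reduce assertion (ii) — the identity $\E_Q(f,M)=f$ on $E\cap 2Q$ — to the interpolation property of the one-dimensional operators $\Eb$ and $\Ebpm$. The first step is to note that the cutoff $\psi$ from \eqref{eq.psi} vanishes on $E$: indeed $\supp{\psi}\subset B(\xqs,c_0\dq)$, while Lemma \ref{lem.CZ}(\ref{lem.CZ.rep}) gives $\dist{\xqs}{E}\geq c_0\dq$, so no point of $E$ lies in $B(\xqs,c_0\dq)$ and hence $\psi\equiv 0$ there. Consequently, for $x\in E\cap 2Q$ the defining formula \eqref{eq.EQ} collapses to
\begin{equation*}
\E_Q(f,M)(x)=T^{\xqs}(f,M)(x)+\widetilde{\E}_Q(f,M)(x),
\end{equation*}
so it suffices to show $\widetilde{\E}_Q(f,M)(x)=f(x)-T^{\xqs}(f,M)(x)$ for such $x$.

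Next I would unwind the composition defining $\widetilde{\E}_Q(f,M)$ at a point $x\in E\cap 2Q$. By Lemma \ref{lem.CZ}(\ref{lem.CZ.graph}) the diffeomorphism $\Phi$ carries $E\cap 2Q$ into $\R\times\set{0}$, so $\Phi(x)=(s_x,0)$ with $\Phi^{-1}(s_x,0)=x$; and since $V(g)(s,t)=g(s)$, composing the one-dimensional extension with $V$, then with $\Phi$, and evaluating at $x$ simply returns the value at $s_x$ of that one-dimensional extension. Set $h:=(f-T^{\xqs}(f,M)|_E)\circ\Phi^{-1}|_{\R\times\set{0}}$, regarded as a function on a finite subset of $\R$ that contains $s_x$, with $h(s_x)=f(x)-T^{\xqs}(f,M)(x)$. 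Then I would invoke Theorems \ref{thm.bd-1d}(i) and \ref{thm.bd-1dpm}(i): whichever of $\Eb,\Ebpm$ is selected by $\Delta(f,M,Q)\in\set{0,1}$ in \eqref{eq.EQ} (see \eqref{eq.dfmq}), the resulting one-dimensional function reproduces $h$ on its data set, in particular equals $h(s_x)$ at $s_x$. Hence $\widetilde{\E}_Q(f,M)(x)=h(s_x)=f(x)-T^{\xqs}(f,M)(x)$, and combining with the display above yields $\E_Q(f,M)(x)=f(x)$.

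I do not expect a genuine obstacle here: the argument is pure bookkeeping, and nonnegativity plays no role (that was the content of Claim \ref{claim.3.1}), since at the single point $x$ the subtraction and re-addition of $T^{\xqs}(f,M)$ cancel exactly as real numbers. The only points needing care are (a) confirming $\psi|_{E}\equiv 0$, which rests precisely on the property $\dist{\xqs}{E}\geq c_0\dq$ of the representative point $\xqs$, and (b) faithfully tracking the straightening $\Phi$ and the vertical-extension map $V$ so that evaluation on $E\cap 2Q$ really does reduce to the one-dimensional interpolation identity; neither step involves any estimate.
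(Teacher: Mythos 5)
Your argument is correct and is essentially the paper's own proof, just spelled out in more detail: the paper likewise notes that $\supp{\psi}$ is disjoint from $E$ (via $\dist{\xqs}{E}\geq c_0\dq$) and that $\widetilde{\mathcal{E}}_Q(f,M)$ interpolates $f-T^{\xqs}(f,M)|_E$ on $E\cap 2Q$ by the interpolation property of $\Eb$, $\Ebpm$. No issues.
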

	
		\begin{proof}[Proof of Claim \ref{claim.3.2}]
			Since the support of $ \psi $ is disjoint from the set $ E $ by \eqref{eq.psi}, and $ \widetilde{\mathcal{E}}_Q(f,M) $ in \eqref{eq.EQ} is a local extension of $ f - T^{\xqs}(f,M)\big|_{E} $, Claim \ref{claim.3.2} follows. 
		\end{proof}

		\begin{claim}\label{claim.3.3}
			{\rm(iii)} holds.
		\end{claim}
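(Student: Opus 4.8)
The goal is to show $\norm{\E_Q(f,M)}_{\ct(100Q)}\leq CM$, and the plan is to estimate the two summands of \eqref{eq.EQ} separately, writing $\E_Q(f,M)=T^{\xqs}(f,M)+(1-\psi)\cdot\widetilde{\mathcal{E}}_Q(f,M)$. For the first summand, Lemma \ref{lem.Txq} gives $T^{\xqs}(f,M)\in\Gk(\xqs,4k_0,CM)$; taking $S=\void$ in the definition of $\Gk$ exhibits $T^{\xqs}(f,M)$ as $\jet_{\xqs}\hat F$ for some $\hat F\in\ctp(\rt)$ with $\norm{\hat F}_{\ct(\rt)}\leq CM$, so $\abs{T^{\xqs}(f,M)(\xqs)}\leq CM$ and $\abs{\grad T^{\xqs}(f,M)}\leq CM$. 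As $T^{\xqs}(f,M)$ is affine, $\xqs\in Q$, and $\dq\leq 1$, this already yields $\norm{T^{\xqs}(f,M)}_{\ct(100Q)}\leq CM$.

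For the second summand, $\psi$ is supported in a ball of radius $\sim\dq$ about $\xqs$ with $\abs{\d^\alpha\psi}\leq C\dq^{-\abs{\alpha}}$ (a consequence of \eqref{eq.psi}), so by the Leibniz rule it suffices to prove the scale-weighted bound
\begin{equation*}
\abs{\d^\alpha\widetilde{\mathcal{E}}_Q(f,M)(x)}\leq CM\dq^{2-\abs{\alpha}}\for x\in 100Q,\ \abs{\alpha}\leq 2 .
\end{equation*}
I would establish this in the two cases of \eqref{eq.dfmq}. When $\Delta(f,M,Q)=1$, formula \eqref{eq.EQ} takes the form \eqref{eq.EQ-big}, so $\widetilde{\mathcal{E}}_Q(f,M)=G\circ\Phi$ with $G$ as in \eqref{eq.GGG}, and the displayed bound is exactly \eqref{eq.GPhi-d-small}; the latter was proved in Claim \ref{claim.3.1} using only $T^{\xqs}(f,M)\in\Gk(\xqs,4k_0,CM)$, Lemmas \ref{lem.FP-G}, \ref{lem.G-G}, \ref{lem.CZ}(\ref{lem.CZ.graph}), \ref{lem.TC-small} and \ref{lem.comp}, and Taylor's theorem---crucially without invoking the dichotomy of Lemma \ref{lem.bs} (which there serves only to make $T^{\xqs}(f,M)$ large, not to control its size).

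When $\Delta(f,M,Q)=0$, i.e. $T^{\xqs}(f,M)\equiv 0$, formula \eqref{eq.EQ} takes the form \eqref{eq.EQ-small}, so $\widetilde{\mathcal{E}}_Q(f,M)=\brac{V\circ\Eb\brac{f\circ\Phi^{-1}\big|_{\R\times\set{0}}}}\circ\Phi$. Here I would first note that Lemma \ref{lem.Txq} gives $0=T^{\xqs}(f,M)\in\Gk(\xqs,4k_0,CM)$; testing against $S=\set{x}$ for $x\in E\cap 2Q$ (nonempty as $Q\in\Ls$) and Taylor-expanding a nonnegative $C^2$ function with vanishing $\xqs$-jet shows $0\leq f(x)\leq CM\dq^2$ for every $x\in E\cap 2Q$. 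From here the argument mirrors the $\Delta(f,M,Q)=1$ case, with $\Eb$ in place of $\Ebpm$ and $T^{\xqs}(f,M)$ replaced by $0$: comparing $\jet_{\xqs}F$ for a near-optimal nonnegative interpolant $F$ of $f$ with the admissible jet $0$ via Lemma \ref{lem.G-G} and Taylor's theorem gives $\abs{\d^\alpha F(x)}\leq CM\dq^{2-\abs{\alpha}}$ on $100Q$ (the analogue of \eqref{eq.F-pqs}); straightening via Lemma \ref{lem.CZ}(\ref{lem.CZ.graph}) and using the boundedness of $\Eb$ gives the analogue of \eqref{eq.VE}; and two applications of Lemma \ref{lem.TC-small} together with Lemma \ref{lem.comp} produce the displayed scale-weighted bound for $\widetilde{\mathcal{E}}_Q(f,M)$, exactly as \eqref{eq.Gb-small}--\eqref{eq.GPhi-d-small} were obtained. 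The Leibniz rule then gives $\norm{(1-\psi)\widetilde{\mathcal{E}}_Q(f,M)}_{\ct(100Q)}\leq CM$, hence $\norm{\E_Q(f,M)}_{\ct(100Q)}\leq CM$; the case $M=0$ is trivial since then $f\equiv 0$ and $\E_Q(f,0)\equiv 0$.

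The main obstacle, I expect, is not any individual estimate but the structural point in the $\Delta(f,M,Q)=0$ branch: one must notice that $T^{\xqs}(f,M)\equiv 0$ already forces the local data $f$ to be $O(M\dq^2)$ on $E\cap 2Q$. This smallness is precisely what lets Lemma \ref{lem.TC-small} restore the weights $\dq^{2-\abs{\alpha}}$ needed to absorb the second-derivative loss of order $\dq^{-1}$ from composing with $\Phi$ (whose second derivatives are of size $C\dq^{-1}$ by Lemma \ref{lem.CZ}(\ref{lem.CZ.graph})) and the further loss of order $\dq^{-2}$ from differentiating $\psi$. Everything else is a transcription of the estimates already carried out for Claim \ref{claim.3.1}.
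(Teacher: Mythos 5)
Your proposal is correct and follows essentially the same route as the paper: the $\Delta(f,M,Q)=1$ branch reuses \eqref{eq.GPhi-d-small} together with $\norm{T^{\xqs}(f,M)}_{\ct(100Q)}\leq CM$ (the paper's \eqref{eq.pqs-est}), and the $\Delta(f,M,Q)=0$ branch rests on the observation that a vanishing jet at $\xqs$ forces $0\leq f\leq CM\dq^2$ on $E\cap 2Q$ (the paper's \eqref{eq.f-small}), after which Lemma \ref{lem.FP-G}, Lemma \ref{lem.TC-small}, and Lemma \ref{lem.comp} restore the scale-weighted bounds exactly as in \eqref{eq.F-d-small}--\eqref{eq.HPhi-small}. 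Your reading of the cutoff bound as $\abs{\d^\alpha\psi}\leq C\dq^{-\abs{\alpha}}$ is the sensible one and suffices for the Leibniz step.
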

		
		\begin{proof}[Proof of Claim \ref{claim.3.3}]
			\newcommand{\pqs}{{T^{\xqs}(f,M)}}
			Suppose $ \pqs \equiv 0 \in \Gk(\xqs,4k_0,CM) $. Then formula \eqref{eq.EQ} is simplified to \eqref{eq.EQ-small}.
			
			By the definition of $ \Gk $, for each $ x \in E \cap 2Q $, there exists $ F^x \in \ctp(\rt) $ with $ F^x(x) = f(x) $ and $ \jet_{\xqs}F^x = \pqs \equiv  0 $. By Taylor's theorem, we have
			\begin{equation}
			0 \leq f(x) = F^x(x) \leq CM\dq^2
			\for
			x \in E \cap 2Q.
			\label{eq.f-small}
			\end{equation}

			Since $ \Gk(\xqs,4k_0,CM) \neq \void $, Lemma \ref{lem.FP-G} implies that there exists $ F \in \ctp(\rt) $ with $ F|_E = f $ and $ \norm{F}_{\ct(\rt)} \leq CM $. 
			
			By Lemma \ref{lem.TC-small} and \eqref{eq.f-small}, we have
			\begin{equation}
			\abs{\d^\alpha F(x)} \leq CM\dq^{2-\abs{\alpha}}
			\for x \in 100Q\,.
			\label{eq.F-d-small}
			\end{equation}
			By Lemma \ref{lem.CZ}(\ref{lem.CZ.graph}) and \eqref{eq.F-d-small}, we see that
			\begin{equation*}
			\norm{F \circ \Phi^{-1}\big|_{\R\times\set{0}}}_{\ct(I(Q))} \leq CM.
			\end{equation*}
			Here, $ I(Q):= \set{t \in \R : \Phi^{-1}(t,0) \in 100Q} $.
			
			Since $ \Eb $ is bounded, we have
			\begin{equation*}
			\norm{\Eb \brac{f \circ \Phi^{-1}\big|_{\R \times \set{0}}}}_{\ct(I(Q))} \leq CM;
			\end{equation*}
			and
			\begin{equation*}
			\norm{V\circ \Eb \brac{f \circ \Phi^{-1}\big|_{\R \times \set{0}}}}_{\ct(\Phi(100Q))} \leq C'M.
			\end{equation*}
			By \eqref{eq.f-small}, we have
			\begin{equation}
			0 \leq f \circ \Phi^{-1} \leq CM\dq^2
			\for x \in \Phi(E \cap 2Q).
			\label{eq.f-small-2}
			\end{equation}
			Set
			\begin{equation*}
			H :=  V\circ \Eb \brac{f \circ \Phi^{-1}\big|_{\R \times \set{0}}}.
			\end{equation*}
			By Lemma \ref{lem.TC-small} and \eqref{eq.f-small-2}, we have
			\begin{equation}
			\abs{\d^\alpha H(x)} \leq CM\dq^{2-\abs{\alpha}}
			\for x \in 100Q
			\text{ and } \abs{\alpha} \leq 2.
			\label{eq.H-d-small}
			\end{equation}
			By Lemma \ref{lem.CZ}(\ref{lem.CZ.graph}) and \eqref{eq.H-d-small}, we have
			\begin{equation}
			\norm{H \circ \Phi}_{\ctp(100Q)} \leq CM\,.
			\label{eq.Hphi}
			\end{equation}
			By Lemma \ref{lem.TC-small}, \eqref{eq.f-small}, and \eqref{eq.Hphi}, we have
			\begin{equation}
			\abs{\d^\alpha (H \circ \Phi)(x)} \leq CM\dq^{2-\abs{\alpha}}
			\for x \in 100Q
			\text{ and } \abs{\alpha} \leq 2.
			\label{eq.HPhi-small}
			\end{equation}
			Using \eqref{eq.psi} and \eqref{eq.HPhi-small} to estimate \eqref{eq.EQ-small}, we can conclude that 
			\begin{equation*}
			\norm{\E_Q(f,M)}_{\ct(100Q)} \leq CM.
			\end{equation*}
			This concludes the case when $ \pqs \equiv 0 $.

			Suppose $ \pqs $ is not the zero polynomial. Then formula \eqref{eq.EQ} becomes \eqref{eq.EQ-big}.
			
			Since $ \pqs \in \Gk(\xqs,4k_0,CM) $, we have
			\begin{equation}
			\norm{\pqs}_{\ct(100Q)} \leq CM.
			\label{eq.pqs-est}
			\end{equation}
			
			Using \eqref{eq.psi}, \eqref{eq.GPhi-d-small}, and \eqref{eq.pqs-est} to estimate \eqref{eq.EQ-big}, we conclude that
			\begin{equation*}
			\norm{\E_Q(f,M)}_{\ct(100Q)} \leq CM.
			\end{equation*}
			This proves the case when $ \pqs $ is not the zero polynomial.
			
			This proves Claim \ref{claim.3.3}
		\end{proof}

		\begin{claim}\label{claim.3.4}
			{\rm(iv)} holds.
		\end{claim}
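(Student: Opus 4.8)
The plan is to read the jet off directly from the defining formula \eqref{eq.EQ}, exploiting the special shape of the cutoff $\psi$. Recall from \eqref{eq.psi} that $\psi \equiv 1$ on a neighborhood of $\xqs$, so the factor $1-\psi$ vanishes \emph{identically} on that neighborhood. Since $\Phi$ is a global $C^2$-diffeomorphism of $\rt$ (Lemma \ref{lem.CZ}(\ref{lem.CZ.graph})) and the one-dimensional operators $\Eb$, $\Ebpm$ of Theorems \ref{thm.bd-1d} and \ref{thm.bd-1dpm} produce functions defined on all of $\R$, the function $\widetilde{\mathcal{E}}_Q(f,M)$ appearing in \eqref{eq.EQ} is a genuine $C^2$ function on $\rt$. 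Hence the product $(1-\psi)\cdot\widetilde{\mathcal{E}}_Q(f,M)$ is $C^2$ and identically zero on a neighborhood of $\xqs$, so all of its partial derivatives up to order $2$ vanish at $\xqs$; in particular it contributes nothing to $\jet_{\xqs}$.

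It then follows from \eqref{eq.EQ} that $\jet_{\xqs}\E_Q(f,M) = \jet_{\xqs} T^{\xqs}(f,M)$. Since $T^{\xqs}(f,M)\in\P$ is an affine polynomial, its degree-one Taylor polynomial at any point --- in particular at $\xqs$ --- is $T^{\xqs}(f,M)$ itself, so $\jet_{\xqs}\E_Q(f,M) = T^{\xqs}(f,M)$, which is exactly (iv). For the stated consequence, invoke Lemma \ref{lem.Txq}: as $\norm{f}_{\ctp(E)}\leq M$ we have $T^{\xqs}(f,M)\in\Gk(\xqs,4k_0,CM)$, whence $\jet_{\xqs}\E_Q(f,M)\in\Gk(\xqs,4k_0,CM)$.

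I expect no real obstacle in this step: the delicate issues --- preserving nonnegativity on $100Q$ and controlling the $C^2$ norm after composition with $\Phi$ --- were already handled in Claims \ref{claim.3.1} and \ref{claim.3.3}, and they are precisely what forced the construction \eqref{eq.EQ} to carry the ``correction term'' $(1-\psi)$ rather than a plain patching. The one point that deserves a line of care is confirming that $\widetilde{\mathcal{E}}_Q(f,M)$ is well-defined and $C^2$ on a full neighborhood of $\xqs$, so that ``identically zero near $\xqs$'' genuinely forces the jet to vanish; as noted above this is immediate from the globality of $\Phi$ and of the one-dimensional extension operators. With Claims \ref{claim.3.1}--\ref{claim.3.4} in hand, part (A) of Lemma \ref{lem.ext-loc} is complete, and one would then turn to the bounded-depth statement (B).
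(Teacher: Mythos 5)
Your proposal is correct and is exactly the paper's argument: since $\psi\equiv 1$ near $\xqs$, the correction term $(1-\psi)\cdot\widetilde{\mathcal{E}}_Q(f,M)$ vanishes identically near $\xqs$ and contributes nothing to the jet, so $\jet_{\xqs}\E_Q(f,M)=T^{\xqs}(f,M)$, and Lemma \ref{lem.Txq} gives the membership in $\Gk(\xqs,4k_0,CM)$. The paper states this in one line; your version merely spells out the (correct) supporting details.
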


		\begin{proof}[Proof of Claim \ref{claim.3.4}]
			Since $ \psi \equiv 1 $ near $ \xqs $ by \eqref{eq.psi}, we have, by Lemma \ref{lem.Txq}
			\begin{equation*}
			\jet_\xqs \E_Q(f,M) = {T^{\xqs}(f,M)}  \in \Gk(\xqs,4k_0,CM).
			\end{equation*}
			This proves Claim \ref{claim.3.4}.
		\end{proof}

		In views of Claims \ref{claim.3.1}-\ref{claim.3.4}, we see that Lemma \ref{lem.ext-loc}(A) holds.\\

		\medskip
		Now to turn to Lemma \ref{lem.ext-loc}(B). 
		
		Fix $ x \in 100Q $. 
		
		We begin by defining the set $ S_Q(x) $.

		Up to a rotation, we can assume that $ \Phi $ takes the form $ \Phi(t_1,t_2) = (t_1,t_2-\phi(t_1)) $, where $ \phi $ is as in Lemma \ref{lem.CZ}(\ref{lem.CZ.graph}). Let $ t_x \in \R $ be such that
		\begin{equation*}
		x = (t_x,\phi(t_x)). 
		\end{equation*}
		Let $ S(t_x) \subset \Phi(E \cap 2Q) $ be as in Theorems \ref{thm.bd-1d} and \ref{thm.bd-1dpm}\footnote{Here we identify $ \R\times\set{0} $ with $ \R $}. Let $ S^{\xqs} $ be as in \eqref{eq.Sx0}, with $ x_0 = \xqs $. We define
		\begin{equation*}
		S_Q(x) := \Phi^{-1}(S(t_x)) \cup \brac{S^{\xqs}\cap E}.
		\end{equation*}
		Thanks to Theorems \ref{thm.bd-1d}, \ref{thm.bd-1dpm}, and the definition of $ S^{\xqs} $ in \eqref{eq.Sx0}, we have $ \#(S(t_x)) \leq C $ and $ \#(S^{\xqs}) \leq C' $. Therefore,
		\begin{equation*}
		\#(S_Q(x)) \leq D_0,
		\end{equation*}
		for some universal constant $ D_0 $.
		
		Let $ f, g \in \ctp(E) $ with $ \norm{f}_{\ctp(E)}, \norm{f}_{\ctp(E)} \leq M $ and $ f =g $ on $ S_Q(x) $. 
		
		Next we analyze \eqref{eq.EQ}.
		
		\newcommand{\sqx}{S^{\xqs}}
		\newcommand{\txq}{T^{\xqs}}

		Since $ f = g $ on $ \sqx \cap E $ and $ \norm{f}_{\ctp(E)},\norm{g}_{\ctp(E)} \leq M $, thanks to \eqref{eq.Gg}-\eqref{eq.tx0}, we have 
		\begin{equation}
		\txq(f,M)=\txq(g,M).
		\label{eq.fg-1}
		\end{equation}
		Since $ f = g $ on $ \Phi^{-1}(S(t_x)) $, we have 
		\begin{equation}
		(f - \txq(f,M) ) \circ \Phi^{-1}= (g-\txq(g,M))\circ \Phi^{-1}
		\text{ on }
		S(t_x).
		\label{eq.fg-2}
		\end{equation}
		Thanks to \eqref{eq.fg-1}, we also have, with $ \Delta(\cdot,\cdot,\cdot) $ as in \eqref{eq.dfmq},
		\begin{equation}
		\Delta(f,M,Q) = \Delta(g,M,Q).
		\label{eq.fg-3}
		\end{equation}
		Combining \eqref{eq.fg-1}-\eqref{eq.fg-3}, we see that, for $ m = 0,1,2 $,
		\begin{equation}
		\begin{split}
		&\ddtm \left[\brac{  
			\Delta(f,M,Q) \Eb + (1-\Delta(f,M,Q))\Ebpm 
		} 
		{
			\brac{(f - \txq(f,M)|_{E}) \circ \Phi^{-1}\big|_{\R \times \set{0}} }
		}\right]\\
		&\,\,\,\,\,\,\,= \ddtm \left[\brac{  
			\Delta(g,M,Q) \Eb + (1-\Delta(g,M,Q))\Ebpm 
		} 
		{
			\brac{(g - \txq(g,M)|_{E}) \circ \Phi^{-1}\big|_{\R \times \set{0}} }
		}\right]
		\end{split}
		\label{eq.fg-4}
		\end{equation}
		Thanks to \eqref{eq.fg-1} and \eqref{eq.fg-4}, we have
		\begin{equation*}
		\d^\alpha \E_Q(f,M)(x) = \d^\alpha \E_Q(g,M)(x)
		\for\abs{\alpha} \leq 2\,.
			\end{equation*}
		This concludes the proof of Lemma \ref{lem.ext-loc}(B).
		
		Lemma \ref{lem.ext-loc} is proved.
		
	\end{proof}

	The next definition describes how we relay information to each small square in $ \Lz $ that contains no data. 
	
	\newcommand{\Lem}{\Lambda_{\rm empty}}
	
	\begin{definition}\label{def.mu}
	 	Recall $ \Lz $ and $ \Ls $ as in Definitions \ref{def.Lz} and \ref{def.Ls}. Let
	 	\begin{equation*}
	 	\Lem := \set{Q \in \Lz \setminus \Ls : \dq < 1}.
	 	\end{equation*}
	 	We define a map
	 	\begin{equation*}
	 	\mu : \Lem \to \Ls
	 	\label{eq.mu}
	 	\end{equation*}
	 	according to the following rule: Let $ Q \in \Lem $. Then $ \delta_{Q^+} \leq 1 $, but $ Q^+ \notin \Lz $. In particular, this means that there exists $ x \in 2Q^+\cap E $. By Lemma \ref{lem.CZ}(\ref{lem.CZ.cover}), there exists $ Q(x) \in \Ls $ such that $ x \in Q(x) $. We define $ \mu(Q) := Q(x) $.
	\end{definition}

	\begin{lemma}\label{lem.muQ}
		Let $ \Lem $ and $ \mu $ be as in Definition \ref{def.mu}. Then the following hold.
		\begin{enumerate}[(A)]
			\item $ 5Q \cap \mu(Q) \neq \void $ for all $ Q \in \Lem $.
			\item Let $ Q \in \Lem $. Let $ x \in Q $ and $ x' \in \mu(Q) $. Then $ \abs{x - x'} \leq C\dq $. 
		\end{enumerate}
	\end{lemma}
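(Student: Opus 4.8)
The plan is to read off both parts directly from Definition~\ref{def.Lz} (the defining properties of Calder\'on--Zygmund squares) together with elementary dyadic geometry. Note that $Q$ and $\mu(Q)$ need not touch, so the comparability statement Lemma~\ref{lem.CZ}(\ref{lem.CZ.cover}) does not apply directly and I will instead argue from scratch. Throughout, write $\bar{x}$ for the point of $2Q^+\cap E$ used in Definition~\ref{def.mu} to define $\mu(Q)$, so that $\bar{x}\in\mu(Q)$ by construction.

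For part (A) everything reduces to the set inclusion $2Q^+\subseteq 5Q$: once this is known, $\bar x\in 2Q^+\subseteq 5Q$ and $\bar x\in\mu(Q)$ give $\bar x\in 5Q\cap\mu(Q)$. The inclusion is an $\ell^\infty$-ball computation: the center of $Q^+$ is within $\ell^\infty$-distance $\tfrac12\dq$ of the center of $Q$, and $2Q^+$ is the $\ell^\infty$-ball of radius $2\dq$ about the center of $Q^+$, so every point of $2Q^+$ is within $\ell^\infty$-distance $2\dq+\tfrac12\dq=\tfrac52\dq$ of the center of $Q$, i.e.\ lies in $5Q$.

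For part (B) the main point is the size estimate $\delta_{\mu(Q)}\le 16\,\dq$, which I would establish by contradiction. Assume $\delta_{\mu(Q)}\ge 32\,\dq$. The third bullet of Definition~\ref{def.Lz}, applied to $Q\in\Lz$, furnishes $\hat x\in 2Q^+$ with $\diam_{\ring_{\hat x}}\sk(\hat x,k_0)<2C_0\dq$. Since $\hat x,\bar x\in 2Q^+$ one has $|\hat x-\bar x|\le\diam(2Q^+)=4\sqrt2\,\dq<16\,\dq\le\tfrac12\delta_{\mu(Q)}$, and since $\bar x\in\mu(Q)$ this places $\hat x$ inside $2\mu(Q)$. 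But $\mu(Q)\in\Ls\subseteq\Lz$, so the second bullet of Definition~\ref{def.Lz} applied to $\mu(Q)$ yields $\diam_{\ring_{\hat x}}\sk(\hat x,k_0)\ge C_0\,\delta_{\mu(Q)}\ge 32C_0\,\dq$, contradicting the bound $<2C_0\dq$. As $\dq$ and $\delta_{\mu(Q)}$ are dyadic, $\delta_{\mu(Q)}<32\dq$ forces $\delta_{\mu(Q)}\le 16\dq$. With this in hand, part (B) is the triangle inequality: for $x\in Q\subseteq 2Q^+$ and $x'\in\mu(Q)$,
\[
|x-x'|\le |x-\bar x|+|\bar x-x'|\le\diam(2Q^+)+\diam(\mu(Q))=4\sqrt2\,\dq+\sqrt2\,\delta_{\mu(Q)}\le 20\sqrt2\,\dq\le C\dq.
\]

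I expect the only non-routine step to be the contradiction argument for the size estimate, and even there the sole delicate point is that the slack $|\hat x-\bar x|\le\diam(2Q^+)$ we pay for working with $2Q^+$ (rather than $2Q$, which is what the second bullet of Definition~\ref{def.Lz} controls) stays comfortably below the $\ell^\infty$-radius $\tfrac12\delta_{\mu(Q)}$ of $2\mu(Q)$; this is exactly what the gap between $32$ and $4\sqrt2$ in the contradiction hypothesis is chosen to guarantee.
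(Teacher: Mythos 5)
Your proof is correct and follows essentially the same route as the paper: part (A) from $\bar x\in 2Q^+\cap E\cap\mu(Q)$ together with $2Q^+\subset 5Q$, and part (B) by contradiction, showing that if $\delta_{\mu(Q)}\gg\dq$ then the second bullet of Definition \ref{def.Lz} for $\mu(Q)$ (on $2\mu(Q)\supset$ a neighborhood of $2Q^+$) clashes with the third bullet for $Q$. Your version merely unpacks explicitly the clash that the paper phrases as ``$2Q^+\subset 2\mu(Q)$ would contradict $Q^+\notin\Lz$,'' with slightly different numerical constants.
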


	\begin{proof}
		Since $ 2Q^+ \cap E \cap \mu(Q) \neq \void $ by construction, and $ 2Q^+ \subset 5Q $, (A) follows.
		
		Next we prove (B). Thanks to (A), it suffices to show that
		\begin{equation}
		\delta_{\mu(Q)} \leq C\delta_Q.
		\label{eq.dmuQ}
		\end{equation}
		Suppose toward a contradiction, that $ \delta_{\mu(Q)} \geq 100\dq $. Then $ 2Q^+ \subset 2\mu(Q) $. Since $ \mu(Q) \in \Ls \subset \Lz  $, this would contradict the fact that $ 2Q^+ \notin \Lz $. Therefore, \eqref{eq.dmuQ} holds. This proves (B). Lemma \ref{lem.muQ} is proved.
	\end{proof}

	We now have all the ingredients to prove Theorem \ref{thm.bd}.

	\begin{proof}[Proof of Theorem \ref{thm.bd}]
		
		Recall $ \Lz $, $ \Ls $, $ \Lem $, $ \mu $ as in Definitions \ref{def.Lz}, \ref{def.Ls} and \ref{def.mu}
		
		We assign a local operator to each element in $ \Lz $ according to the following.
		
		\newcommand{\xms}{{x_{\mu(Q)}^\sharp}}
		
		\newcommand{\eqs}{\E_Q^\sharp}
		
		\begin{enumerate}[Type 1]
			\item Suppose $ Q \in \Ls $, i.e., $ E \cap 2Q \neq \void $. We set $ \eqs := \E_Q $, where $ \E_Q $ is as in Lemma \ref{lem.ext-loc}. Let $ \xqs $ be as in Lemma \ref{lem.CZ}(\ref{lem.CZ.rep}).
			
			\item Suppose $ Q \in \Lem $. We set $ \eqs := T_w^{\{\xms\}} \circ T^{\xms} $, where $ \xms $ is as in Lemma \ref{lem.CZ}(\ref{lem.CZ.rep}), $ T^{\xms} $ is as in Definition \ref{def.Tx0} (with $ x_0 = \xms $), and $ T_w^{\{\xms\}} $ is the operator in Lemma \ref{lem.WT} associated with the singleton Whitney field $ \wtp\big(\{\xms\}\big) $. Let $ \xqs $ be as in Lemma \ref{lem.CZ}(\ref{lem.CZ.rep}).

			\item Suppose $ Q \in \Lz \setminus (\Ls\cup\Lem) $, namely, $ E \cap 2Q = \void $ and $ \dq = 1 $. We set $ \eqs :\equiv 0 $. Let $ \xqs $ be as in Lemma \ref{lem.CZ}(\ref{lem.CZ.rep}).
			
		\end{enumerate}

		Let $ \set{\theta_Q : Q \in \Lz} $ be a partition of unity subordinate to $ \Lz $, such that
		\begin{equation}
		\sum_{Q \in \Lz} \theta_Q \equiv 1
		,\,
		0 \leq \theta_Q \leq 1,\,
		\supp{\theta_Q} \subset \frac{9}{8} Q,\text{ and }
		\abs{\d^\alpha\theta_Q}\leq C\delta_Q^{2-\abs{\alpha}}
		\,\text{ for }\abs{\alpha} \leq 2\,.
		\label{eq.theta-Q}
		\end{equation}
		
		Given $ f,M $, we define
		\begin{equation}
		\E(f,M)(x) := \sum_{Q \in \Lz}\theta_Q(x) \cdot \eqs(f,M)(x).
		\label{eq.E}
		\end{equation}

		Since $ \eqs(f,M) \geq 0 $ on $ 2Q $ for each $ Q \in \Lz $, we have $ \E(f,M) \geq 0 $ on $ \rt $.
		
		Since $ \eqs(f,M) = f $ on $ E \cap 2Q $ for each $ Q \in \Lz $, we have $ \E(f,M) = f $ on $ E $.
		
		Fix $ x \in \rt $. We compute the derivatives of $ \E(f,M) $ at $ x $.
		
		\newcommand{\eqsp}{{\mathcal{E}_{Q'}^\sharp}}
		Let $ Q \in \Lz $ such that $ Q \ni x $. We can write
		\begin{equation}
		\begin{split}
		\d^\alpha \E(f,M)(x) &= \sum_{Q'\touch Q(x)}\theta_{Q'}(x)\cdot\d^\alpha \E_{Q'}(f,M)(x) \\
		&\,\,\,\,\,\,\,\,\,\,+ \sum_{\substack{Q'\touch Q\\0 < \beta \leq \alpha}} \d^\beta\theta_{Q'}(x) \cdot \d^{\alpha-\beta}\brac{ \eqs(f,M) - \eqsp (f,M) } (x)\,.
		\end{split}
		\label{eq.dEfM}
		\end{equation}


		\begin{claim}\label{claim.last}
			Fix $ x \in \rt $. Let $ Q \in \Lz $ such that $ Q \ni x $. Given $ Q' \in \Lz $ such that $ Q' \touch Q $, we have
			\begin{equation}
			\abs{\d^\alpha (\eqs(f,M) - \eqsp(f,M))(x)} \leq CM\delta_{Q}^{2-\abs{\alpha}}
			\text{ for }\abs{\alpha} \leq 2.
			\label{eq.patch-est}
			\end{equation}
		\end{claim}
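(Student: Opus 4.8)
The plan is to prove Claim~\ref{claim.last} by a case analysis on the types of $Q$ and $Q'$, reducing in every case to the comparison estimate in Lemma~\ref{lem.G-G} together with the universal bounds on the local operators $\eqs$ coming from Lemma~\ref{lem.ext-loc}(A)(iii)-(iv) and the norm control of $T_w$ in Lemma~\ref{lem.WT}(B). The key observation is that each local operator $\eqs$ produces a function whose jet at the representative point $\xqs$ (resp.\ $\xms$ for Type~2, or the zero jet for Type~3) lies in $\Gk(\cdot,4k_0,CM)$: for Type~1 this is Lemma~\ref{lem.ext-loc}(A)(iv); for Type~2 it is Lemma~\ref{lem.Txq} applied at $\xms$ (noting $0\in\Gk$ trivially if $T^{\xms}=0$); for Type~3 it is the trivial membership $0\in\Gk(\xqs,4k_0,CM)$, valid because $E\cap 2Q=\void$ forces $\sk(\xqs,4k_0)$ to be large. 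Since $Q'\touch Q$, Lemma~\ref{lem.CZ}(A) gives $\tfrac14\delta_Q\le\delta_{Q'}\le 4\delta_Q$, and the representative points of $Q$ and $Q'$ (or, for Type~2, of $\mu(Q)$ and $\mu(Q')$, via Lemma~\ref{lem.muQ}(B)) are within $C\delta_Q$ of each other and of $x$.

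First I would record, for a single local operator $\eqs$, the pointwise bound
\begin{equation*}
\abs{\d^\alpha\big(\eqs(f,M) - \jet_{p_Q}\eqs(f,M)\big)(x)} \le CM\delta_Q^{2-\abs{\alpha}}
\quad\text{for } x\in 100Q,\ \abs{\alpha}\le 2,
\end{equation*}
where $p_Q$ is the relevant representative point: this is Taylor's theorem applied to $\eqs(f,M)$, whose $C^2(100Q)$ norm is $\le CM$ (Lemma~\ref{lem.ext-loc}(A)(iii) for Type~1; for Type~2 the bound $\norm{T_w^{\{\xms\}}T^{\xms}(f,M)}_{C^2}\le CM$ follows from Lemma~\ref{lem.Txq} and Lemma~\ref{lem.WT}(B); Type~3 is trivial). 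Then I would write
\begin{equation*}
\eqs(f,M) - \eqsp(f,M) = \big(\eqs(f,M) - \jet_{p_Q}\eqs(f,M)\big) - \big(\eqsp(f,M) - \jet_{p_{Q'}}\eqsp(f,M)\big) + \big(\jet_{p_Q}\eqs(f,M) - \jet_{p_{Q'}}\eqsp(f,M)\big),
\end{equation*}
estimate the first two parenthesized terms by the Taylor bound above (valid at $x$ since $x\in Q\cap Q'$ and $Q,Q'\touch Q$ so $x\in 100Q\cap 100Q'$), and estimate the third term by Lemma~\ref{lem.G-G} applied with $x_Q=p_Q$, $x_{Q'}=p_{Q'}$, using $P_Q=\jet_{p_Q}\eqs(f,M)\in\Gk(p_Q,4k_0,CM)$, $P_{Q'}=\jet_{p_{Q'}}\eqsp(f,M)\in\Gk(p_{Q'},4k_0,CM)$, and $\abs{p_Q-p_{Q'}}+\delta_Q+\delta_{Q'}\le C\delta_Q$. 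Summing the three contributions gives \eqref{eq.patch-est}.

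The one place requiring care — and what I expect to be the main obstacle — is matching up the representative points when $Q$ and $Q'$ are of different types, or when $Q'\in\Lem$ so that the natural basepoint is $\xms[Q']=x_{\mu(Q')}^\sharp$ rather than a point inside $Q'$. Here I would invoke Lemma~\ref{lem.muQ}(B) to bound $\abs{x-\xms[Q']}\le C\delta_{Q'}\le C\delta_Q$ (and likewise for $Q$ if $Q\in\Lem$), together with the fact that the jet of the Type~2 operator at $\xms$ agrees with $T^{\xms}(f,M)\in\Gk(\xms,4k_0,CM)$; then Lemma~\ref{lem.G-G} still applies with the basepoints $\xms[Q]$ and $\xms[Q']$, since all the relevant distances are still $O(\delta_Q)$. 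The bounded-intersection property \eqref{eq.bip} in Lemma~\ref{lem.CZ}(A) guarantees only finitely many $Q'\touch Q$, so no issue arises in eventually summing \eqref{eq.patch-est} over $Q'$ in \eqref{eq.dEfM}; but that summation is the job of the next step, not of this claim.
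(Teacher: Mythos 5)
Your proposal is correct and follows essentially the same route as the paper: split $\eqs(f,M)-\eqsp(f,M)$ into two Taylor errors at the representative points plus a jet-to-jet difference, bound the former by Taylor's theorem and the norm bounds on the local operators, and bound the latter by Lemma \ref{lem.G-G} after placing each local jet in $\Gk(\cdot,4k_0,CM)$, with Lemma \ref{lem.muQ}(B) handling the displaced basepoints for Type 2 squares. The only (harmless) deviation is Type 3, where the paper simply uses the uniform norm bounds together with $\delta_Q=1$ rather than arguing $0\in\Gk(\xqs,4k_0,CM)$; your membership claim is also true, but it follows from a cutoff construction exploiting $\dist{\xqs}{E}\geq \delta_Q/2$, not from $\sk(\xqs,4k_0)$ being large.
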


		\begin{proof}[Proof of Claim \ref{claim.last}]
			
			Fix $ \alpha $ with $ \abs{\alpha} \leq 2 $. By the triangle inequality, we can write
			\begin{equation}
			\begin{split}
			\abs{\d^\alpha (\eqs(f,M) - \eqsp(f,M))(x)}
			&\leq \abs{\d^\alpha (\eqs(f,M) - \jet_{x_{Q}^\sharp}\eqs(f,M))(x)}\\
			&\,\,\,\,\,\,\,\,+ \abs{\d^\alpha (\eqsp(f,M) - \jet_{x_{Q'}^\sharp}\eqsp(f,M))(x)}\\
			&\,\,\,\,\,\,\,\,+
			\abs{\d^\alpha(\jet_{x_{Q}^\sharp}\eqs(f,M) - \jet_{x_{Q'}^\sharp}\eqsp(f,M))(x)}\\
			&=: \eta_1 + \eta_2 + \eta_3.
			\end{split}
			\label{eq.eta0}
			\end{equation}
			
			By Taylor's theorem and Lemma \ref{lem.CZ}(\ref{lem.CZ.cover}),
			\begin{equation}
			\eta_1, \eta_2 \leq CM\delta_{Q}^{2-\abs{\alpha}}.
			\label{eq.eta-12}
			\end{equation}
			
			Now we estimate $ \eta_3 $. We want to show that
			\begin{equation}
			\eta_3 \leq CM \delta_{Q}^{2-\abs{\alpha}}.
			\label{eq.eta-3}
			\end{equation}
			
			\begin{enumerate}[\text{Case }1]
				\item If either $ Q $ or $ Q' $ is of Type 3, then \eqref{eq.eta-3} follows from the triangle inequality, Lemma \ref{lem.WT} and Lemma \ref{lem.ext-loc}.

				For the rest of the cases, we assume that neither $ Q $ nor $ Q' $ is of Type 3.
				
				\item Suppose both $ Q, Q' \in \Ls $. Recall from Lemma \ref{lem.ext-loc} that $ \jet_{x_{Q}^\sharp}\eqs(f,M) \in \Gk(x_{Q}^\sharp,4k_0,CM) $ and $ \jet_{x_{Q'}^\sharp}\eqsp(f,M) \in \Gk(x_{Q'}^\sharp,4k_0,CM) $. Thus, \eqref{eq.eta-3} follows from Lemma \ref{lem.G-G} and Taylor's theorem.

				\renewcommand{\xms}{{x_{\mu(Q')}^\sharp}}
				\item Suppose one and only one of $ Q, Q' $ belongs to $ \Ls $. By symmetry, we may assume that $ Q \in \Ls $ and $ Q' \in \Lem $.  This means that $ \E_{Q'} = T_w^{\{\xms\}} \circ T^{\xms} $, with $ T^{\xms} $ as in Definition \ref{def.Tx0} and $ T_w^{\{\xms\}} $ as in Lemma \ref{lem.WT}. Thanks to Lemma \ref{lem.CZ}(\ref{lem.CZ.cover}) and Lemma \ref{lem.muQ}(B), we have
				\begin{equation}
				\eqindent
				\abs{\xqs - \xms}, \abs{x - \xqs}, \abs{x - \xms} \leq C\dq.
				\label{eq.xqs-xms}
				\end{equation}
				By the triangle inequality, we have
				\begin{equation}
				\eqindent
				\begin{split}
				\eta_3 &\leq \abs{\d^\alpha\brac{\jet_{x_{Q}^\sharp}\eqs(f,M) - T^{\xms}(f,M)}(x)}\\
				&\enskip\enskip+
				\bigg|
				\d^\alpha 
				\bigg(T^{\xms}(f,M) - T_w^{\{\xms\}} \circ T^{\xms}(f,M)\bigg)(x)
				\bigg|\\
				&= \eta_3^{(1)} + \eta_3^{(2)}.
				\end{split}
				\label{eq.eta-3-1}
				\end{equation}
				
				By Taylor's theorem and \eqref{eq.xqs-xms}, we have
				\begin{equation}
				\eqindent
				\eta_3^{(2)} \leq CM\dq^{2-\abs{\alpha}}.
				\label{eq.e32}
				\end{equation}
				
				To estimate $ \eta_3^{(1)} $, we write
				\begin{equation}
				\eqindent
				\begin{split}
				\eta_3^{(1)}
				&=
				\abs{\d^\alpha \brac{ T^{\xqs}(f,M) - T^{\xms}(f,M) }(x)}\enskip \enskip\enskip \text{(Lemma \ref{lem.ext-loc})}\\
				&\leq 
				CM\dq^{2-\abs{\alpha}}.
				\enskip \enskip\enskip
				\text{(Lemma \ref{lem.G-G}, Taylor's theorem, and \eqref{eq.xqs-xms})}
				\end{split}
				\label{eq.e31}
				\end{equation}
				
				Thus, \eqref{eq.eta-3} follows from \eqref{eq.eta-3-1}, \eqref{eq.e32}, and \eqref{eq.e31}.

				\renewcommand{\xms}{{x_{\mu(Q)}^\sharp}}
				\newcommand{\xmsp}{{x_{\mu(Q')}^\sharp}}
				\renewcommand{\eqsp}{{\mathcal{E}_{Q'}^\sharp}}
				
				\item Suppose $ Q, Q' \in \Lem $. By construction, we have $ \eqs = T_w^{\{\xms\}} \circ T^{\xms} $ and $ \eqsp = T_w^{\{\xmsp\}}\circ T^{\xmsp} $, with $ T^{\{\cdot\}} $ as in Definition \ref{def.Tx0} and $ T_w^{\{\cdot\}} $ as in Lemma \ref{lem.WT}. 
				
				Thanks to Lemma \ref{lem.CZ}(\ref{lem.CZ.cover}) and Lemma \ref{lem.muQ}(B), we have
				\begin{equation}
				\eqindent
				\delta_{\mu(Q)}, \delta_{\mu(Q')}, \abs{x - \xms}, \abs{x - \xmsp}, \abs{x - \xmsp} \leq C\dq.
				\label{eq.dmq}
				\end{equation}

				We note that
				\begin{equation*}
				\eqindent
				\begin{split}
				\jet_{\xms} \brac{T_w^{\{\xms\}} \circ T^{\xms}(f,M)} &= T^{\xms}(f,M), \text{ and }\\
				\jet_{\xmsp} \brac{T_w^{\{\xmsp\}}  \circ T^{\xmsp}(f,M)} &= T^{\xmsp}(f,M).
				\end{split} 
				\end{equation*}
				Therefore, by the triangle inequality, we can write
				\begin{equation}
				\eqindent
				\begin{split}
				\eta_3 &\leq \abs{\d^\alpha\brac{ T_w^{\{\xms\}} \circ T^{\xms}(f,M) - T^{\xms}(f,M) }(x) }\\
				&\enskip\enskip+ 
				\abs{\d^\alpha\brac{ T_w^{\{\xmsp\}} \circ T^{\xmsp}(f,M) - T^{\xmsp}(f,M) }(x) }\\
				&\enskip\enskip+ \abs{\d^\alpha \brac{ T^{\xms}(f,M)   - T^{\xmsp}(f,M)   }(x)}\\
				&=: \eta_3^{(1)} + \eta_3^{(2)} + \eta_3^{(3)}.  
				\end{split}
				\label{eq.eta-3-2}
				\end{equation}
				By Taylor's theorem and \eqref{eq.dmq}, we have
				\begin{equation}
				\eqindent
				\eta_3^{(1)}  + \eta_3^{(2)} \leq CM\brac{\abs{\xms - x} + \abs{\xmsp - x}}^{2-\abs{\alpha}} \leq CM\dq^{2-\abs{\alpha}}.
				\label{eq.ee}
				\end{equation} 
				By Lemma \ref{lem.G-G}, Taylor's theorem, and \eqref{eq.dmq}, we have
				\begin{equation}
				\eqindent
				\eta_3^{(3)} \leq CM\delta_{Q}^{2-\abs{\alpha}}.
				\label{eq.eee}
				\end{equation}
				Thus, \eqref{eq.eta-3} follows from \eqref{eq.eta-3-2}, \eqref{eq.ee}, and \eqref{eq.eee}.

			\end{enumerate}
			We have now finished analyzing all the cases. 
			
			Thus, \eqref{eq.patch-est} follows from \eqref{eq.eta0}, \eqref{eq.eta-12}, and \eqref{eq.eta-3}. Claim \ref{claim.last} is proved.

		\end{proof}

		Now, using \eqref{eq.bip} and Lemma \ref{lem.ext-loc}(A) to estimate the first sum in \eqref{eq.dEfM}, and using \eqref{eq.bip}, \eqref{eq.theta-Q} and \eqref{eq.patch-est} to estimate the second sum in \eqref{eq.dEfM}, we can conclude that
		\begin{equation*}
		\norm{\E(f,M)}_{\ct(\rt)} \leq CM\,.
		\end{equation*}

		This proves part (A) of Theorem \ref{thm.bd}.

		Now we turn to part (B). 
		
		Fix $ x \in \rt $. Let $ Q(x) \in \Lz $ such that $ Q(x) \ni x $. We define
		\begin{equation*}
		S(x) := \brac{\bigcup_{\substack{Q'\touch Q(x)\\Q' \in \Ls}} S_{Q'}(x) } \cup \brac{\bigcup_{\substack{Q'\touch Q(x)\\ Q' \in \Lem}}S_{\mu(Q')(x)}},
		\end{equation*}
		with $ S_Q(x), S_{\mu(Q')}(x) $ as in Lemma \ref{lem.ext-loc}(B), and $ \mu $ as in Definition \ref{def.mu}.
		
		By \eqref{eq.bip} and Lemma \ref{lem.ext-loc}(B), we have
		\begin{equation*}
		\#(S(x)) \leq D,
		\end{equation*}
		for some universal constant $ D $.
		
		Let $ f , g \in \ctp(E) $ with $ \norm{f}_{\ctp(E)}, \norm{g}_{\ctp(E)} \leq M $ and $ f = g $ on $ S(x) $. By the construction of $ S(x) $, we see that 
		\begin{equation}
		\d^\alpha \eqsp(f,M)(x) = \d^\alpha \eqsp(g,M)(x)
		\for\abs{\alpha} \leq 2,
		\label{eq.EQfg}
		\end{equation}
		for all $ Q' \in \Lz $ such that $ \frac{9}{8}Q' \ni x $. From \eqref{eq.theta-Q}, \eqref{eq.dEfM}, and \eqref{eq.EQfg}, we see that
		\begin{equation*}
		\d^\alpha \E(f,M)(x) = \d^\alpha \E(g,M)(x)
		\for \abs{\alpha} \leq 2.
		\end{equation*}
		This proves part (B). Theorem \ref{thm.bd} is proved.
	\end{proof}


\end{document}